\documentclass[bj,numbers]{imsart}

\usepackage[utf8]{inputenc}
\usepackage{amsmath}
\usepackage{amsfonts}
\usepackage{amssymb}
\usepackage{amsthm}
\usepackage{microtype}
\usepackage{dsfont}
\usepackage{color}
\usepackage{fontawesome5}
\usepackage[capitalize]{cleveref}
\usepackage{csquotes}

\startlocaldefs
\MakeOuterQuote{"}

\usepackage{tikz-cd} 

\newtheorem{theorem}{Theorem}[section]
\newtheorem{proposition}[theorem]{Proposition}
\newtheorem{lemma}[theorem]{Lemma}
\newtheorem{corollary}[theorem]{Corollary}

\newtheorem{definition}[theorem]{Definition}

\theoremstyle{definition}
\newtheorem{remark}[theorem]{Remark}
\newtheorem{example}[theorem]{Example}

\definecolor{auburn}{rgb}{0.43, 0.21, 0.1}
\definecolor{britishracinggreen}{rgb}{0.0, 0.26, 0.15}
\definecolor{burntumber}{rgb}{0.54, 0.2, 0.14}
\definecolor{carmine}{rgb}{0.59, 0.0, 0.09}
\definecolor{aurometalsaurus}{rgb}{0.43, 0.5, 0.5}
\definecolor{gray}{rgb}{0.4, 0.4, 0.4 }
\definecolor{darkteal}{rgb}{0, 0.35, 0.35}
\definecolor{darkpurple}{rgb}{0.4, 0, 0.23}
\definecolor{orange}{rgb}{1,0.5,0}

\newcommand{\am}[1]{\textcolor{darkpurple}{\small\sffamily [Alexandre: {#1}]}}
\newcommand{\as}[1]{\textcolor{carmine}{\small\sffamily [Anne: {#1}]}}
\newcommand{\js}[1]{\textcolor{darkteal}{\small\sffamily\upshape [Johan: {#1}]}}

\newcommand{\mdf}[1]{\textcolor{red}{#1}}

\newcommand{\mbb}{\mathbb}
\newcommand{\mrm}{\mathrm}
\newcommand{\mcal}{\mathcal}

\renewcommand{\le}{\leqslant}
\renewcommand{\leq}{\leqslant}
\renewcommand{\ge}{\geqslant}
\renewcommand{\geq}{\geqslant}

\newcommand{\abr}[1]{\left|#1\right|}
\newcommand{\cbr}[1]{\left\{#1\right\}}
\newcommand{\nbr}[1]{\left\|#1\right\|}
\newcommand{\brbr}[1]{\bigl(#1\bigr)}
\newcommand{\rbr}[1]{\left(#1\right)}
\newcommand{\sbr}[1]{\left[#1\right]}

\newcommand{\sauf}{\setminus}
\renewcommand{\emptyset}{\varnothing}

\newcommand{\graph}{\operatorname{gph}}
\newcommand{\gph}{\graph}

\newcommand{\restr}{\llcorner}

\newcommand{\supp}{\operatorname{spt}}
\newcommand{\spt}{\supp}
\newcommand{\epi}{\operatorname{epi}}
\newcommand{\cl}{\operatorname{cl}}
\newcommand{\ri}{\operatorname{ri}}
\newcommand{\intr}{\operatorname{int}}
\newcommand{\bnd}{\operatorname{bnd}}
\newcommand{\dom}{\operatorname{dom}}
\newcommand{\rge}{\operatorname{rge}}
\newcommand{\proj}{\operatorname{proj}}

\newcommand{\conv}{\operatorname{conv}}
\newcommand{\ball}{\mathds{B}}

\newcommand{\Prob}{\mathcal{P}}
\newcommand{\Fell}{\mathcal{F}}
\newcommand{\borel}{\mathcal{B}}

\newcommand{\NN}{\mathbb{N}}
\newcommand{\Id}{\operatorname{Id}}

\newcommand{\RV}{\operatorname{RV}}
\newcommand{\SL}{\operatorname{SL}}

\newcommand{\Mz}{\mathrm{M}_{0}}
\newcommand{\Mzp}{\Mz}
\newcommand{\Mzpp}{\mathrm{M}_{0}^{+}}
\newcommand{\tMzp}{\tilde{\mathrm{M}}_{0}}
\newcommand{\Mzpcm}{\mathrm{M}_{0,cm}}

\newcommand{\pos}{\operatorname{pos}}

\newcommand{\res}{\operatorname{res}}

\newcommand{\nat}{\mathbb{N}}
\newcommand{\reals}{\mathbb{R}}
\newcommand{\R}{\reals}
\newcommand{\Rd}{\R^d}

\newcommand{\Rddmz}{\R^{2d}\setminus\cbr{0}}
\newcommand{\RdRd}{\Rd\times\Rd}
\newcommand{\Rdmz}{\Rd_{\smallsetminus 0}}
\newcommand{\Rmz}{\R_{\smallsetminus 0}}
\newcommand{\RdRdmz}{(\Rd\times\Rd)\setminus\{0\}}

\newcommand{\coup}{\Pi}
\newcommand{\cmcoup}{\Pi_{\mathrm{cm}}}

\newcommand{\zcoup}{\Gamma_{0}}
\newcommand{\zmcoup}{\Gamma_{0,\mathrm{m}}}
\newcommand{\zpmcoup}{\Gamma_{0,\mathrm{m}}^{\mathrm{p}}}
\newcommand{\zcmcoup}{\Gamma_{0,\mathrm{cm}}}
\newcommand{\zpcoup}{\Gamma_{0,\mathrm{cm}}^{\mathrm{p}}}
\newcommand{\zprcoup}{\Gamma_{0}^{\mathrm{p}}}

\newcommand{\Cbzp}{\mathrm{C}_{b,0}^{+}}
\newcommand{\Cbz}{\mathcal{C}_0}

\newcommand{\Fellmm}{\Fell_{\mathrm{mm}}}
\newcommand{\Fellcm}{\Fell_{\mathrm{cm}}}
\newcommand{\Fellmcm}{\Fell_{\mathrm{mcm}}}

\newcommand{\Picm}{\Pi_{cm}}

\newcommand{\eps}{\varepsilon}
\newcommand{\diff}{\mathrm{d}}
\newcommand{\point}{\,\cdot\,}
\newcommand{\diag}{\operatorname{diag}}

\newcommand{\vto}{\raisebox{-0.5pt}{\,\scriptsize$\stackrel{\raisebox{-0.5pt}{\mbox{\tiny $0$}}}{\longrightarrow}$}\,}
\newcommand{\Mto}{\vto}
\newcommand{\Fto}{\raisebox{-0.5pt}{\,\scriptsize$\stackrel{\raisebox{-0.5pt}{\mbox{\tiny $\mathcal{F}$}}}{\longrightarrow}$}\,}

\newcommand{\inpr}[1]{\langle{#1}\rangle}
\newcommand{\norm}[1]{\left\lvert{#1}\right\rvert}

\newcommand{\bpsi}{\bar{\psi}}

\newcommand{\pr}{\operatorname{\mathsf{P}}}

\newcommand{\Szbor}{\mcal{S}_0}

\newcommand{\nset}{\mathbb{N}}
\newcommand{\rset}{\mathbb{R}}

\newcommand{\Mp}{\mathrm{M}^{+}}

\newcommand{\E}{\mathbb{E}}

\newcommand{\dist}{\operatorname{\mrm{d}}}
\endlocaldefs

\begin{document}

\begin{frontmatter}

\title{Zero-couplings of infinite measures with cyclically monotone support and multivariate regular variation}
\runtitle{Zero-couplings of infinite measures}

\begin{aug}
    \author[ar]{\inits{A.}\fnms{Alexandre}~\snm{Reber}\ead[label=e1]{alexandre.reber@math.au.dk}}
    \author[as]{\inits{A.}\fnms{Anne}~\snm{Sabourin}\ead[label=e2]{anne.sabourin@u-paris.fr}}
    \author[js]{\inits{J.}\fnms{Johan}~\snm{Segers}\ead[label=e3]{jjjsegers@kuleuven.be}}
    \author[cv]{\inits{C.}\fnms{Cees}~\snm{de Valk}\ead[label=e4]{cees.de.valk@knmi.nl}}

    \address[ar]{Aarhus University, Department of Mathematics, Ny Munkegade 118, 8000 Aarhus C, Denmark\printead[presep={,\ }]{e1}}

    \address[as]{Université Paris Cité, Université Paris Saclay, ENS Paris Saclay, CNRS, SSA, INSERM, Centre Borelli, F-75006, Paris, France\printead[presep={,\ }]{e2}}

    \address[js]{KU Leuven, 
        Department of Mathematics, 
        Celestijnenlaan 200B, 
        3001 Heverlee, 
        Belgium, and UCLouvain, LIDAM/ISBA\printead[presep={,\ }]{e3}}

    \address[cv]{Koninklijk Nederlands Meteorologisch Instituut, 
        PO Box 201, 
        NL-3730 AE De Bilt, 
        Netherlands\printead[presep={,\ }]{e4}}

        
\end{aug}

\begin{abstract}
We study cyclically monotone transport plans between measures in $\mathrm{M}_0(\mathbb{R}^d)$, the class of Borel measures on $\mathbb{R}^d \setminus \{0\}$ that are finite on sets bounded away from the origin but may have infinite total mass. We avoid moment assumptions and allow the transport cost to be infinite. This framework naturally arises for exponent measures in multivariate regular variation and includes other examples such as Lévy measures.

We introduce the notion of a zero-coupling and establish existence of cyclically monotone zero-couplings for arbitrary pairs of measures in $\mathrm{M}_0(\mathbb{R}^d)$. Under a Hausdorff-dimension condition on the first measure and when at least one of the two measures has infinite mass, we prove uniqueness of the cyclically monotone zero-coupling, yielding an analogue of the Brenier--McCann theorem in this infinite-measure setting. We further derive a representation of such couplings through gradients of closed convex functions and identify conditions under which the zero-coupling is proper in the sense that the second measure is equal to the restriction to the punctured space of the push-forward of the first measure by a cyclically monotone transport map.

Finally, we apply these results to regularly varying probability measures. We show that a cyclically monotone coupling between two such distributions admits a tail limit that coincides with the unique proper cyclically monotone zero-coupling between the corresponding exponent measures.
\end{abstract}

\begin{keyword}
	\kwd{Cyclical monotonicity}
	\kwd{Fell topology}
    \kwd{Multivariate extremes}
	\kwd{Optimal transport}
	\kwd{Regular variation}
\end{keyword}

\end{frontmatter}

\section{Introduction}
\label{sec:introduction}



\subsection{General motivation}

We study measure transportation by cyclically monotone mappings between nonnegative Borel measures on Euclidean space with possibly infinite total mass that are finite on complements of neighborhoods of the origin. Such measures arise in the definition of multivariate regular variation; Lévy measures provide another example. Especially in the first setting, which appears naturally in extreme value theory, the usual moment assumptions that are key to classical results in optimal transport theory cannot be taken for granted.


Our goal is to strengthen the theoretical foundation for the use of cyclically monotone mappings in statistics, where such maps are of intrinsic interest---even when they cannot be interpreted as optimal transport plans due to infinite transport costs. A prominent example is the theory of center-outward multivariate quantiles \citep{Beirlant2020,Chernozhukov2017,Hallin2022,hallin2021}, which provides a notion of rank statistics for multivariate observations.  Our results are intended to lay the groundwork for extensions to multivariate extreme quantile contours. A difference to the existing literature is that a natural reference distribution in an extreme value context is not spherical uniform,  
but would typically factorize into a product of a reference angular distribution and a heavy-tailed radial component. In another direction, monotone maps and optimal transport play a central role in generative modeling \citep{lipman2022flow,albergo2022building}. We anticipate that our results may help extend these methods to the generation of synthetic extreme samples.


Our focus is on transport maps and an appropriate notion of coupling between measures in the set $\Mzp(\Rd)$, the set of 
Borel measures 
on $\Rdmz = \Rd \sauf \cbr{0}$ that are finite on sets bounded away from the origin but may have infinite total mass \citep{hult2006regular,lindskog2014regular}. The space $\Mzp(\Rd)$ is equipped with a notion of convergence in duality with the set $\Cbz(\Rd)$ of real-valued bounded continuous functions whose support is bounded away from the origin. A typical assumption in multivariate extreme value theory 
is that observations are drawn from a probability distribution $Q$ on $\Rd$ that is regularly varying: 
there exists a function $b$ on the positive reals and a limit measure $\nu \in \Mzp(\Rd)$ such that
\begin{equation}
    \label{eq:def_regular_variation}
    t \, Q(b(t)\point) \to \nu, \qquad t \to \infty,
\end{equation}
in $\Mzp(\Rd)$. In practice, $\nu$ can be used to approximate the distribution $Q$ in regions of $\Rd$ far from the origin, where few or no observations are available.


\subsection{Related work, zero-couplings, proper couplings}

Our interest lies in transport maps between measures $\mu, \nu \in \Mz(\Rd)$ with arbitrarily heavy tails. In such cases, neither the Monge problem for the $L^2$ cost,
\begin{equation}
    \label{eq:Monge}
    \min_{T: T_\#\mu = \nu} \int \|x-T(x)\|^2 \, \diff\mu(x),
\end{equation}
nor the Kantorovich problem,
\begin{equation}
    \label{eq:Kantorovich}
    \min_{\gamma \in \Pi(\mu,\nu)} \int \|x-y\|^2 \, \diff\gamma(x,y),
\end{equation}
may admit solutions with finite objective, since $\mu$ and $\nu$ need not have finite second moments themselves. Here, $T_\#\mu$ is the push-forward measure of $\mu$ induced by $T$, see Section~\ref{sec:notation} below, while $\Pi(\mu,\nu)$ denotes the set of coupling measures between $\mu$ and $\nu$, i.e., measures on $\RdRd$ with margins $\mu$ and $\nu$. As we will explain below, we will actually use a slightly weakened notion of couplings, called zero-couplings, to deal with the possible singularity at the origin.

The special case where $\mu, \nu$ are probability measures with finite second moments falls within the classical framework of optimal transport, for which there are many excellent and extensive textbook introductions \citep[e.g.][]{ambrosio2021lectures, santambrogio2015optimal,villani2009optimal}. If $\mu$ places no mass on sets of Hausdorff dimension less than $d-1$, Brenier's Theorem \cite{brenier1987decomposition,brenier1991polar} (attributed also to \cite{ruschendorf1990characterization}; see \cite{villani2009optimal}, Chapter~9, and the bibliographic comments on page~212 on many other seminal papers) guarantees that the Kantorovich problem admits a unique solution $\pi$, which can be written as $\pi = (\Id \times T)_\#\mu$, where the map $T$ is defined $\mu$-almost everywhere as $T = \nabla \psi$ for some convex function $\psi$ and which, up to a $\mu$-null set, also solves the Monge problem \eqref{eq:Monge}. A key property of $T$---or, equivalently, of the support $\spt \pi$ of $\pi$---is cyclical monotonicity, the definition of which is recalled in \cref{rel:cyclical_mononicity0} below. Conversely, if a coupling measure $\pi$ has cyclically monotone support, then it is necessarily optimal in the sense that it solves the Kantorovich problem~\eqref{eq:Kantorovich}, see \citep{Beiglböck2015,de202360,pratelli2008sufficiency,schachermayer2009characterization}. This is why we focus on cyclically monotone transport plans (maps and couplings) even when the expected cost may be infinite. For simplicity, we focus on the notion of cyclical monotonicity with respect to the quadratic cost function $c(x,y) = \|x-y\|^2$, deferring the study of general cost functions $c(x,y)$ to future research.


Brenier's result has been extended in two distinct directions:

(i) Positive and potentially infinite Borel measures are considered in several articles, though typically under moment assumptions that allow for the study of the Kantorovich problem. For instance, \cite{GuillenMou2019} introduces a transportation metric between Lévy measures in the context of viscosity equations, a metric that \cite{CatalanoLavenant2021} later uses to construct an index of dependence between random measures. In \cite{figalli2010new}, solutions of the Kantorovich problem are linked to gradient flows that solve the heat equation with Dirichlet boundary conditions. A common feature of these references is the presence of a specific subset of the domain---the origin in \cite{GuillenMou2019,CatalanoLavenant2021} or the boundary in \cite{figalli2010new}---which acts as a potential "sink" or "reservoir" of mass in the transport problem. These works also introduce extended notions of coupling, called "admissible couplings" in \cite{figalli2010new,GuillenMou2019} and "extended couplings" in \cite{CatalanoLavenant2021}, in which the first and second marginals are required to coincide with $\mu$ and $\nu$ only on the complement of the reservoir or sink. In our paper, we adopt the terminology "zero-couplings" to describe such couplings, reflecting the special role of the origin in $\Mz(\Rd)$.

(ii)
In \cite{McCann1995}, the condition that second moments exist is relaxed, and existence and uniqueness of a cyclically monotone coupling and the associated transport map are established under assumptions similar to those in \cite{brenier1987decomposition}. Let $\mathcal{P}(\Rd)$ denote the set of probability measures on $\Rd$ and $\cmcoup(P, Q)$ the set of coupling measures of $P, Q \in \mathcal{P}(\Rd)$ with cyclically monotone support. The subdifferential of a convex function $\psi : \Rd \to \reals \cup \{\infty\}$ is denoted by $\partial \psi$ (see Section~\ref{section:CVX} for preliminaries).

\begin{theorem}[McCann, \cite{McCann1995}]
   \label{thm:McCann}  
    Let $P, Q \in \mathcal{P}(\Rd)$, with $P$ vanishing on sets of Hausdorff dimension at most $d-1$. The set $\cmcoup(P,Q)$ contains a single element $\pi$. For any closed convex function $\psi$ such that $\spt\pi \subset \partial\psi$, the gradient $\nabla \psi$ is a transport map, $(\nabla\psi)_{\#}P = Q$. This transport map is unique $P$-a.e.
\end{theorem}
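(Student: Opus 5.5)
I would prove McCann's theorem in three steps: existence of a cyclically monotone coupling without moment assumptions, a representation of any such coupling by a closed convex potential together with differentiability of that potential, and uniqueness via a duality-free argument.

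\emph{Existence.} Approximate $P$ and $Q$ by discrete measures $P_n=\frac1n\sum_i\delta_{x_i}$ and $Q_n=\frac1n\sum_i\delta_{y_i}$ converging weakly to $P$ and $Q$; empirical measures of i.i.d.\ samples suffice. For each $n$, take a permutation $\sigma$ minimizing $\sum_i\|x_i-y_{\sigma(i)}\|^2$. The resulting coupling $\pi_n$ has cyclically monotone support, since the permutation is optimal on a finite set and every cycle is a competitor. The family $(\pi_n)$ is tight because its marginals are tight, so by Prokhorov a subsequence converges weakly to some $\pi\in\coup(P,Q)$. Cyclical monotonicity passes to the limit: any finite family of points of $\spt\pi$ is a limit of points of $\spt\pi_n$, and the defining inequality is a closed condition. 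No moments enter anywhere.

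\emph{Representation and differentiability.} By Rockafellar's theorem, a cyclically monotone set $\Gamma=\spt\pi$ is contained in $\partial\psi$ for some closed convex $\psi$. Since $\pi(\Gamma)=1$, the first marginal $P$ is concentrated on $\dom\partial\psi$, which lies in $\dom\psi$. The boundary of the convex set $\dom\psi$ has Hausdorff dimension at most $d-1$, so $P$ gives it no mass and $P$ is carried by $\intr\dom\psi$. On that interior, $\psi$ is locally Lipschitz. By the Alberti / Zajíček result, its non-differentiability set is countably $(d-1)$-rectifiable, hence of Hausdorff dimension at most $d-1$ and $P$-null. So $\partial\psi(x)=\{\nabla\psi(x)\}$ for $P$-a.e.\ $x$. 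It follows that $\pi$ is concentrated on the graph of $\nabla\psi$, so $\pi=(\Id\times\nabla\psi)_\#P$ and $(\nabla\psi)_\#P=Q$. This argument applies to every closed convex $\psi$ whose subdifferential contains $\spt\pi$, which gives the second assertion once uniqueness of $\pi$ is known.

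\emph{Uniqueness (main obstacle).} Without finite costs I cannot appeal to optimality of Kantorovich duality, so I would follow McCann's geometric argument. Suppose $\nabla\psi_\#P=\nabla\phi_\#P=Q$ with $\psi,\phi$ closed convex. I need to show $\nabla\psi=\nabla\phi$ $P$-a.e. My first attempt is the set-comparison argument.
\begin{itemize}
\item If the two gradients differ on a set of positive $P$-measure, then near a point of density there is a set of the form $M=\{\psi-\phi>c\}$, after adding constants, whose $P$-mass is positive and on which the gradients disagree.
\item Use the monotonicity inclusion $\partial\phi(M)\subset\partial\psi(M)$, up to $P$-null sets. This holds because at a maximum-type configuration the supporting planes of $\phi$ are dominated by those of $\psi$ (Aleksandrov's lemma).
\item Strictness gives $Q(\partial\psi(M))>Q(\partial\phi(M))$ by a positive amount, coming from points of $\partial M$ whose $\partial\psi$-images are not $\partial\phi$-images.
\item This contradicts $P(M)=Q(\nabla\psi(M))=Q(\nabla\phi(M))$.
\end{itemize}

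The delicate points are the choice of $c$ and the proof of the positive mass defect. For the defect I would use that both gradients are injective off null sets via Legendre duality, $\nabla\psi^*\circ\nabla\psi=\Id$ $P$-a.e. I would also reuse the Hausdorff-dimension condition to discard level-set boundaries.

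Once the two maps agree $P$-a.e., every $\pi\in\cmcoup(P,Q)$ equals $(\Id\times\nabla\psi)_\#P$ for a common map. Hence $\cmcoup(P,Q)$ is a singleton and the transport map is unique $P$-a.e.
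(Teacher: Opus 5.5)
Your existence step (optimal assignments between equal-weight discrete approximations, tightness, and closedness of cyclical monotonicity under weak limits) is sound, and so is your representation step. Together they are McCann's Theorem~6 and Proposition~10, which the paper re-runs in the proofs of \cref{thm:existence} and \cref{thm:expr_zcmcoup}. For the statement itself the paper gives no proof and only cites McCann.

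\textbf{The gap is in uniqueness, which is the heart of the statement.} Your contradiction rests on $P(M)=Q(\nabla\psi(M))=Q(\nabla\phi(M))$, that is, on $P$-a.e.\ injectivity of the gradients. You plan to obtain this from $\nabla\psi^*\circ\nabla\psi=\Id$ $P$-a.e. That identity requires $\psi^*$ to be differentiable at $Q$-almost every point, which is a smallness hypothesis on $Q$ that the theorem does not make. For example, for $Q=\delta_0$ one has $\nabla\psi\equiv 0$ and $Q(\nabla\psi(M))=1$ whatever $P(M)$ is. In general only
\[
Q(\nabla\psi(M))=P\bigl((\nabla\psi)^{-1}(\nabla\psi(M))\bigr)\ge P(M)
\]
holds, so comparing $Q$-masses of images cannot produce the contradiction. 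Two further problems:
\begin{itemize}
\item For your $M=\{\psi-\phi>c\}$ the Aleksandrov inclusion runs the other way. It is the function lying lower on $M$, here $\phi$, whose subgradient image is larger: $\partial\psi(M)\subset\partial\phi(M)$ up to points outside the range of $\partial\phi$.
\item The strict ``mass defect'' is asserted rather than derived. A ``point of density'' also does not give what is actually needed, namely positive $P$-mass of $M$ in every ball around the base point.
\end{itemize}

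\textbf{The missing idea is McCann's preimage argument.} Take $p\in\spt P$ at which both functions are differentiable with $\nabla\psi(p)\neq\nabla\phi(p)$, and set $M=\{\phi-\phi(p)>\psi-\psi(p)\}$. Aleksandrov's lemma (McCann's Lemma~13; \cref{lem:aleksandrov-extended} in the paper) says that $X=(\nabla\psi)^{-1}(\partial\phi(M))$ lies in $M$ and misses some ball $\ball(p,r)$. Working only with preimages, hence with no injectivity and no assumption on $Q$, you get
\[
P(M)\le P\bigl((\nabla\phi)^{-1}(\partial\phi(M))\bigr)=Q(\partial\phi(M))=P(X)\le P(M)-P\bigl(M\cap\ball(p,r)\bigr).
\]
This is a contradiction as soon as $P(M\cap\ball(p,r))>0$. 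That fact comes from McCann's implicit function theorem (his Theorem~17 and Corollary~19, as used in \cref{lem:smallset}). Near $p$ the level set $\{\psi-\phi=\psi(p)-\phi(p)\}$ has locally finite $(d-1)$-dimensional Hausdorff measure and is therefore $P$-null. So every small ball around $p\in\spt P$ is split, up to a null set, between $M$ and the analogous set with $\psi$ and $\phi$ exchanged. One of the two has positive mass in every such ball, and you swap $\psi$ and $\phi$ if necessary. The final subtraction uses $P(M)<\infty$, and the free swap uses symmetry. These are exactly the two ingredients that fail for infinite measures and force the extra Steps~4--5 in the paper's proof of \cref{thm:unique}.
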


In this context, the optimality of a transport map is not applicable if the Kantorovich problem has no solutions for which the objective is finite. Still, the focus on cyclical monotonicity is justified by Brenier's theorem and the earlier cited results on the sufficiency of cyclical monotonicity for optimality in case second moments do exist.


\subsection{Problem statement and main results}

To our knowledge, the following questions have not been treated in the literature for infinite measures without finite-moment assumptions:
\begin{enumerate}[label=(Q\arabic*), ref=(Q\arabic*)]
\item \label{Q:existence} {\bf Existence, Uniqueness:} Given two potentially infinite measures $\mu,\nu\in\Mz(\Rd)$, is there a (unique) cyclically monotone zero-coupling (see Definition~\ref{def:zero-coupling}) of $\mu,\nu$?
\item \label{Q:transport} {\bf Transport map:} Given such a zero-coupling $\gamma$, and a convex function $\psi$ such that $\spt \gamma\subset \partial \psi$, is it necessarily the case that the map $\nabla\psi$ pushes $\mu$ forward to $\nu$?
\item \label{Q:stability} {\bf Stability:} In case $\mu,\nu$ are limit measures of $P,Q$, respectively, in the definition in \cref{eq:def_regular_variation} of regular variation, and if $\pi$ is a cyclically monotone coupling between $P$ and $Q$, 
what are the connections between $(P,Q, \pi, \nabla\psi)$ and $(\mu,\nu, \gamma, \nabla\bpsi)$, where $\nabla\psi$ and $\nabla\bpsi$ are transport maps from $P$ to $Q$ and from $\mu$ to $\nu$, respectively? 
\end{enumerate}

The answer to \ref{Q:existence}, and in particular the uniqueness part, involves substantially more technicality than in \cite{McCann1995}, due to the potentially infinite masses of sets. 
A cyclically monotone zero-coupling always exists (Theorem~\ref{thm:existence}) and a sufficient condition for it to be unique is that one of the two measures vanishes on sets of Hausdorff dimension at most $d-1$ and, moreover, one of the two measures has infinite mass (Theorem~\ref{thm:unique}). We give a counter-example to the uniqueness if both measures are absolutely continuous and have finite mass (Remark~\ref{rk:c-e_uniqueness}).

Regarding \ref{Q:transport}, if $P,Q$ in \cref{thm:McCann} are replaced by infinite measures $\mu,\nu\in\Mzp(\Rd)$, the conclusion is no longer valid: even if the cyclically monotone zero-coupling between $\mu$ and $\nu$ exists and is unique, it may not be possible to write $\nu$ as $(\nabla \psi)_\# \mu$ for a closed convex function $\psi$, as the following counterexample shows. 

\begin{example}
\label{c-ex:existence_push_forward}
    Let $\mu,\nu\in\Mzp(\rset)$ and suppose that 
    $\mu(\rset_{>0}) = \nu(\rset_{<0}) = \infty$ while $\mu(\rset_{<0}) = \nu(\rset_{>0}) = 0$. The measure $\gamma = \mu \otimes \delta_0 + \delta_0 \otimes \nu$ is a zero-coupling of $\mu$ and $\nu$ [see Definition~\ref{def:zero-coupling}: we have $\gamma(A \times \reals) = \mu(A)$ and $\gamma(\reals \times A) = \nu(A)$ for every Borel set $A \subset \reals \sauf \{0\}$], and its support is contained in $\rbr{\{0\} \times \reals_{\le 0}} \cup \rbr{\reals_{\ge 0} \times \{0\}}$ and therefore (cyclically) monotone. Still, we claim that there cannot exist a non-decreasing function $T : \reals \to \reals$ such that $\nu = T_\# \mu$. Indeed, the existence of such a map $T$ would lead to a contradiction, as we show next.
    Let $x>0$. If $T(x)<0$, then, since $\mu(\rset_{>0}) = \infty$ but $\mu((x, \infty)) < \infty$ (as $\mu \in \Mzp(\rset)$), we would have
    \[ 
        \nu((-\infty,T(x)])
        = \mu(\{z\in\rset\sauf\{0\} \mid T(z) \le T(x)\}) 
        \ge \mu((0,x])
        = \infty, 
    \]
    in contradiction to the assumption that $\nu\in\Mzp(\rset)$. Hence, we have $T(x)\ge0$ for all $x>0$. But then we obtain another contradiction, since
    \[ 
        \infty 
        = \nu(\rset_{<0})
        = \mu(\{z\in\rset \sauf \{0\} \mid T(z)<0\})
        \le \mu(\rset_{<0})
        = 0.
    \]
    Therefore, a map $T$ with the stated properties cannot exist, proving the claim. \hfill \qed
\end{example}

In an earlier version of this paper, \citet{devalk2019tailsoptimaltransportplans}, ordinary couplings rather than zero-couplings between infinite measures were considered instead. This eventually led to an error: Example~\ref{c-ex:existence_push_forward} above is in contradiction to Theorem~4.4 in \citep{devalk2019tailsoptimaltransportplans}. The crucial point is that the measure $\gamma$ in the example is not a coupling between $\mu$ and $\nu$: we have $\gamma(\{0\} \times \reals) = \infty$ while $\mu(\{0\})$ is either undefined or zero, and similarly $\gamma(\reals \times \{0\}) = \infty$ is not equal to $\nu(\{0\})$. The mistake can be traced back to Lemma~A.2 in \citep{devalk2019tailsoptimaltransportplans}: the $\Mz$-limit $\gamma$ of a sequence of coupling measures $\gamma_n$ of measures $\mu_n$ and $\nu_n$ (see Section~\ref{sec:stability} for the definition of $\Mz$-convergence) is not necessarily a coupling itself between the $\Mz$-limits $\mu$ and $\nu$, but only a zero-coupling.


As will appear clearly at later stages of the paper, the answer to \ref{Q:transport} in Theorem~\ref{thm:expr_zcmcoup} depends on whether the origin plays the role of a reservoir.  A zero-coupling provides a cyclically monotone transport map $\nabla\psi$ if and only if the coupling is what we call \emph{proper}, that is, $\gamma(\{0\}\times\Rdmz)=0$: the origin $0 \in \Rd$ does not play the role of reservoir of mass. Sufficient conditions for cyclically monotone couplings to be proper are given in Section~\ref{sec:proper-zcoup}.

Our main results in relation to \ref{Q:stability} are summarized in \cref{fig:cd}. A sufficient condition for them to hold is that (a) both $P$ and $\mu$ vanish on sets of Hausdorff dimension at most $d-1$, referred to as ``small sets'', and (b) $\spt\nu\subset\spt\mu$. Weaker assumptions for (b) are asserted in \cref{main_thm_2}.
In the commutative diagram in \cref{fig:cd}, the notation $\RV$ denotes the operator that associates a probability measure to the limit measure in the definition in \cref{eq:def_regular_variation} of regular variation (or an extension thereof in the case of the coupling measure).
There are two ways to travel from probability measures $P,Q$ in the top-left of the diagram to the unique cyclically monotone zero-coupling $\gamma$ between $\mu,\nu$ in the bottom-right:
\begin{itemize}
    \item first take the limit measures $\mu$ and $\nu$ in the definition of regular variation of $P$ and $Q$ respectively, and then consider the unique cyclically monotone zero-coupling $\gamma$ between them;
    \item first consider the unique cyclically monotone coupling $\pi$ between $P$ and $Q$, and then take its limit measure $\gamma$ in an extension of the definition of regular variation.
\end{itemize}
Moreover, the transport map $\nabla \bar{\psi}$ associated with the zero-coupling $\gamma$ of $\mu$ and $\nu$ is the tail limit of the transport map $\nabla \psi$ associated with the coupling $\pi$ of $P$ and $Q$.

\begin{figure}
\begin{center}
    \begin{tikzcd}[nodes in empty cells=true, row sep=1.5cm, column sep=3cm]
    P,Q
    \arrow{r}{  } 
    \arrow[swap]{d}{\RV}
    &
    \left\{ \pi = (\Id\times\nabla\psi)_\#P\right\} = \cmcoup(P,Q)
    \arrow{d}{\RV}
    \\
    \mu,\nu
    \arrow[swap]{r}{ }
    & 
    \left\{ \gamma = (\Id\times\nabla\bpsi)_\#\mu \right\} = \zcmcoup(\mu,\nu)
    \end{tikzcd}
\end{center}
\caption{\label{fig:cd} 
Probability measures $P$ and $Q$ on $\Rd$, with $P$ vanishing on small sets; the coupling $\pi$ of $P$ and $Q$ with cyclically monotone support contained in the subdifferential of a convex function $\psi$; the limit measures $\mu$ and $\nu$ on $\Rdmz$ of $P$ and $Q$, respectively, in the definition of regular variation, with $\mu$ vanishing on small sets; the zero-coupling $\gamma$ of $\mu$ and $\nu$ with cyclically monotone support contained in the subdifferential of a convex function $\bpsi$, this subdifferential being the limit in the Fell topology of the one of the subdifferential of $\psi$. Under an additional condition on the support of $\mu$, the zero-coupling $\gamma$ is proper, so that $\gamma$ and $\nu$ can be written as push-forward measures of $\mu$ by $\Id \times \nabla \bpsi$ and $\nabla \bpsi$, respectively.}
\end{figure}

\subsection{Outline of the paper}

In Section~\ref{section:zero_couplings}, zero-marginals and zero-couplings are introduced, and
sequences of zero-couplings and sets containing their supports are studied. The existence of zero-couplings whose supports are cyclically monotone is proved and a representation of such zero-couplings using gradients of closed convex functions is derived.

Proper zero-couplings are the focus of Section~\ref{sec:proper-zcoup}. We provide conditions on the supports of the two measures to be coupled for the zero-coupling between them with (cyclically) monotone support to be proper. The latter property is crucial for the representation in Section~\ref{section:zero_couplings} to yield a push-forward formula for one measure in terms of the other, precluding situations as in the counterexample in Example~\ref{c-ex:existence_push_forward}.

In Section~\ref{sec:brenier}, an extension of Brenier's Theorem to measures in $\Mzp(\Rd)$ with potentially infinite total mass is provided. When at least one of the zero-marginals has infinite mass, conditions are given for the cyclically monotone zero-coupling to be unique and for a cyclically monotone transport map to exist.

Section~\ref{sec:EVTOT} investigates the connections linking couplings between multivariate regularly varying probability distributions on the one hand and zero-couplings between their limit measures in the definition of regular variation on the other hand. A criterion is given for a cyclically monotone transport map to exist; it only concerns the support of the first zero-marginal.
An extended form of regular variation of the cyclically monotone coupling is proven, together with the other assertions in the commutative diagram in \cref{fig:cd}. A short discussion in Section~\ref{sec:discussion} concludes the paper.

The proofs of the main results are given at the end of each section. The appendix contains background material on $\Mz$-convergence and convex analysis (Appendix~\ref{sec:background}), together with auxiliary results on zero-couplings (Appendix~\ref{section:seq_zcoup}).

\subsection{Notation}
\label{sec:notation}


On $\reals^k$, we always work with the Euclidean inner product $\inpr{\point,\point}$ and the Euclidean norm $\nbr{\point}$. The interior, closure and boundary of a set $A$ of a topological space are denoted by $\intr A$, $\cl A$ and $\bnd A$, 
respectively. The open ball in $\rset^k$ with center $x \in \rset^k$ and radius $r > 0$ is denoted by $\ball_{x,r} = \ball(x, r) = \ball_k(x,r) = \cbr{y \in \rset^k \mid \nbr{y-x} < r}$. The $\sigma$-field of Borel subsets on a topological space $X$ is denoted by $\borel(X)$. We frequently use the projections $\proj_1, \proj_2 : \RdRd \to \Rd$ defined by $\proj_1(x,y) = x$ and $\proj_2(x,y) = y$.

The effective domain, or just domain in short, of a convex function $\psi : \Rd \to \rset \cup \cbr{+\infty}$ is the set $\dom \psi = \{x \in \Rd \mid \psi(x) < \infty\}$, while its subgradient is the set
\begin{equation}
    \label{eq:subdiff}
    \partial \psi = \cbr{(x,v) \in \dom \psi \times \Rd \mid \forall y \in \Rd : \psi(y) \ge \psi(x) + \inpr{y-x, v}}.
\end{equation}
For $x \in \Rd$, we write $\partial \psi(x) = \cbr{v \in \Rd \mid (x, v) \in \partial \psi}$; note that $\partial \psi(x) = \emptyset$ if $x \not\in \dom \psi$. We refer to \cref{section:CVX} for more details and useful results on convex analysis. 

For two Borel measures $\mu$ and $\nu$ on $\Rd$, let $\coup(\mu,\nu)$ denote the set of coupling measures, i.e., the Borel measures $\gamma$ on $\RdRd$ such that $\gamma(B \times \Rd) = \mu(B)$ and $\gamma(\Rd \times B) = \nu(B)$ for all $B \in \borel(\Rd)$. Furthermore, $\cmcoup(\mu,\nu)$ is the subset of such couplings $\gamma$ whose support is cyclically monotone in the sense of \cref{rel:cyclical_mononicity0} below. The set of Borel probability measures on $\Rd$ is denoted by $\Prob(\Rd)$. 


Let the map $T : \dom T \subset \reals^k \to \reals^l$ be Borel measurable and let $\mu$ be a Borel measure on a $S \subset \reals^k$, where both $\dom T$ and $S$ are Borel sets. The push-forward measure $T_\# \mu$ of $\mu$ by $T$ is the Borel measure on $\reals^l$ defined by
\[
    \forall B \in \borel(\reals^l): \qquad
    T_{\#}\mu(B) 
    = \mu\bigl( T^{-1}(B) \bigr)
    = \mu \rbr{\cbr{x \in S \cap \dom T \mid T(x) \in B}}
    .
\]
\section{Zero-couplings between infinite measures}
\label{section:zero_couplings}

First we introduce the notion of zero-couplings in Section~\ref{sec:Mzp}, and then discuss their stability under $\Mz$-convergence of measures in Section~\ref{sec:stability}. Finally, we discuss their existence and their representation in terms of push-forward measures plus a potential remainder term in Section~\ref{sec:representation}.

\subsection{Zero-couplings}
\label{sec:Mzp}

A subset $B$ of $\Rd$ is bounded away from $0 \in \Rd$ if $0 \not\in \cl(B)$. Let $\borel_0(\Rd)$ denote the collection of Borel subsets of $\Rdmz = \Rd \sauf \cbr{0}$ that are bounded away from $0$. Let $\Mz(\Rd)$ denote the set of nonnegative Borel measures $\mu$ on $\Rdmz$ such that $\mu(B) < \infty$ for all $B \in \borel_0(\Rd)$. For $\mu \in \Mz(\Rd)$, let $[\mu]$ denote the set of nonnegative Borel measures $\bar\mu$ on $\Rd$ whose restriction $\res\bar\mu$ to $\Rdmz$ is equal to $\mu$, i.e., $\bar\mu(B) = \mu(B)$ for any $B \in \borel(\Rdmz)$. Further, let $\tilde{\mu} \in [\mu]$ be the unique member of $[\mu]$ such that $\tilde{\mu}(\cbr{0}) = 0$, i.e., $\tilde{\mu}(B) = \mu(B \sauf \cbr{0})$ for $B \in \borel(\Rd)$.
For Borel measures $\bar{\mu}$ on $\Rd$, we use the notation $[\bar{\mu}]$ in the same way.

It will be convenient to define the support of $\mu \in \Mz(\Rd)$ as the set of points $x \in \Rd$ such that $\tilde{\mu}(U) > 0$ for every open set $U \subset \Rd$ such that $x \in U$; that is, the support of $\mu$ is defined as the support (in the traditional sense) of $\tilde{\mu}$. This means that $0$ can belong to the support of $\mu$, even though $\mu$ is a measure on $\Rdmz$ only. In particular, $0 \in \spt \mu$ if $\mu(U \sauf \cbr{0}) > 0$ for every open set $U \in \Rd$ containing $0$. Note that $\spt \mu$ cannot be equal to $\cbr{0}$.

\begin{definition}[Zero-marginals]
    \label{def:zero-marginals}
    The left and right zero-marginals of a nonnegative Borel measure $\gamma$ on $(\RdRd) \sauf \cbr{0}$ are the nonnegative Borel measures $\varphi_1 \gamma, \varphi_2 \gamma$ on $\Rdmz$, respectively, with
    \begin{align*}
       \varphi_1 \gamma(A) = \gamma(A \times \Rd)
       \quad \text{and} \quad
       \varphi_2 \gamma(A) = \gamma(\Rd \times A) 
       \quad \text{for $A \in \borel(\Rdmz)$.}
    \end{align*}
\end{definition}

If $A \subset \Rdmz$ is bounded away from $0 \in \Rd$, then $A \times \Rd$ and $\Rd \times A$ are bounded away from $0 \in \RdRd$. It follows that $\gamma \in \Mzp(\RdRd)$ implies $\varphi_1 \gamma, \varphi_2 \gamma \in \Mzp(\Rd)$.

A subset $T$ of $\RdRd$ is \emph{monotone} if
\begin{equation}
    \label{eq:monotone}
    \forall (x_1,y_1),(x_2,y_2) \in T: \qquad
    \inpr{x_1-x_2,y_1-y_2} \ge 0.
\end{equation}
In the following, we need a stronger notion: a set $T \subset \RdRd$ is \emph{cyclically monotone} if for every integer $n \ge 2$ and all $(x_1,y_1),\ldots,(x_n,y_n) \in T$ with $y_{n+1} := y_1$, we have 
\begin{equation}
    \label{rel:cyclical_mononicity0}
    \sum_{i=1}^n \inpr{x_i,y_i} \ge \sum_{i=1}^n \inpr{x_i,y_{i+1}}.
\end{equation}
The case $n = 2$ in \cref{rel:cyclical_mononicity0} leads back to \cref{eq:monotone}, so that a cyclically monotone set is also monotone.



\begin{definition}[Zero-couplings]
    \label{def:zero-coupling}
    Let $\mu,\nu\in\Mzp(\Rd)$. The set $\zcoup(\mu,\nu)$ of zero-couplings between $\mu$ and $\nu$ consists of all nonnegative Borel measures $\gamma$ on $(\RdRd)\sauf\{0\}$ such that 
    \[ 
        \varphi_1\gamma = \mu 
        \quad \text{and} \quad 
        \varphi_2\gamma = \nu. 
    \]
    Let $\zmcoup(\mu,\nu)$ and $\zcmcoup(\mu,\nu)$ denote the sets of zero-couplings $\gamma$ between $\mu$ and $\nu$ such that $\spt\gamma$ is monotone and cyclically monotone, respectively.
\end{definition}

In Definition~\ref{def:zero-coupling}, we implicitly use the property that $\zcoup(\mu,\nu) \subset \Mz(\RdRd)$, which is justified by Lemma~\ref{lm:zcoup_in_Mzp}. Clearly, $\zcmcoup(\mu,\nu) \subset \zmcoup(\mu,\nu) \subset \zcoup(\mu,\nu)$ for $\mu,\nu \in \Mzp(\Rd)$.

\begin{remark}
    \label{rkdef:couplings}
    \cref{def:zero-marginals,def:zero-coupling} are similar to the ones of the usual marginals and couplings of measures.
    Indeed, for a measure $\pi$ in the set $\Mp(\RdRd)$ of nonnegative Borel measures on $\Rd$, the marginals $\phi_1 \pi, \phi_2 \pi$ are defined like the zero-marginals with the difference that the set $A$ is allowed to contain the origin, while the collection of (cyclically monotone) couplings $\Pi_{(\mathrm{cm})}(p,q)\subset\Mp(\RdRd)$ between $p,q\in\Mp(\Rd)$ is defined like the one of zero-couplings by substituting $\varphi_1,\varphi_2$ with $\phi_1,\phi_2$.
\end{remark}

The following lemma ensures $\zcoup(\mu,\nu)\subset \Mzp(\Rd\times \Rd)$, so that for any zero-coupling $\gamma$, the expressions $\spt \gamma$, $\tilde{\gamma}$ and $[\gamma]$ are well-defined.

\begin{lemma}
\label{lm:zcoup_in_Mzp}
For all $\mu,\nu \in \Mzp(\Rd)$, we have $\zcoup(\mu,\nu) \subset \Mzp(\RdRd)$.
\end{lemma}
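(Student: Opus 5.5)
Fix $\gamma \in \zcoup(\mu,\nu)$. By definition $\gamma$ is a nonnegative Borel measure on $(\RdRd)\sauf\cbr{0}$, so it only remains to check that $\gamma(B) < \infty$ for every Borel set $B \subset (\RdRd)\sauf\cbr{0}$ bounded away from the origin of $\RdRd$. The plan is to cover such a set $B$ by two sets of the form $A \times \Rd$ and $\Rd \times A$, with $A$ bounded away from $0 \in \Rd$, and then apply the zero-marginal identities $\varphi_1\gamma = \mu$ and $\varphi_2\gamma = \nu$.

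Since $0 \notin \cl B$, there is $r > 0$ with $B \cap \ball_{2d}(0,r) = \emptyset$. Hence every $(x,y) \in B$ satisfies $\nbr{x}^2 + \nbr{y}^2 \ge r^2$, and therefore $\nbr{x} \ge r/\sqrt{2}$ or $\nbr{y} \ge r/\sqrt{2}$. Put $A = \cbr{z \in \Rd \mid \nbr{z} \ge r/\sqrt{2}}$. This is a closed Borel subset of $\Rdmz$ that is bounded away from $0$, so $A \in \borel_0(\Rd)$, and
\[
    B \subset (A \times \Rd) \cup (\Rd \times A).
\]

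Both sets on the right-hand side are Borel subsets of $(\RdRd)\sauf\cbr{0}$, because $A$ does not contain $0$. Subadditivity and the definition of the zero-marginals then give
\[
    \gamma(B) \le \gamma(A\times\Rd) + \gamma(\Rd\times A) = \varphi_1\gamma(A) + \varphi_2\gamma(A) = \mu(A) + \nu(A) < \infty,
\]
where finiteness holds because $\mu,\nu \in \Mzp(\Rd)$ and $A$ is bounded away from $0$. This shows $\gamma \in \Mzp(\RdRd)$.

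The argument is routine and has no real obstacle. The only point needing care is that $A \times \Rd$ and $\Rd \times A$ lie inside the punctured product space, so that the marginal identities from Definition~\ref{def:zero-marginals} apply to them. This holds because $0 \notin A$.
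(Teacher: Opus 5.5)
Your proposal is correct and follows essentially the same route as the paper: the paper covers $(\RdRd)\sauf\ball_{2d}(0,r)$ by $\rbr{\Rd\sauf\ball_d(0,r')}\times\Rd$ and $\Rd\times\rbr{\Rd\sauf\ball_d(0,r')}$ with $r'\le r/\sqrt{2}$ and bounds $\gamma$ of that set by $\mu$- and $\nu$-masses. You apply the same covering directly to an arbitrary Borel set $B$ bounded away from the origin, which is only a cosmetic difference.
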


\begin{proof}
    To avoid ambiguities, we indicate the dimension of a ball by a subscript. Let $r > 0$. For $0 < r' \le r/\sqrt{2}$, we have $\ball_d(0, r') \times \ball_d(0, r') \subset \ball_{2d}(0,r)$ and thus
    \[
        (\RdRd) \sauf \ball_{2d}(0,r)
        \subset
        \rbr{\rbr{\Rd\sauf\ball_{d}(0,r')}\times\Rd}
        \cup
        \rbr{\Rd\times\rbr{\Rd\sauf\ball_{d}(0,r')}}.
    \]
    For any $\gamma \in \zcoup(\mu,\nu)$, we have therefore
    \[
        \gamma\rbr{(\RdRd) \sauf \ball_{2d}(0,r)}
        \le \mu(\Rd\sauf\ball_d(0,r')) + \nu(\Rd\sauf\ball_d(0,r')) 
        < \infty. 
        \qedhere
    \]
\end{proof}

We now define a special set of zero-couplings that will be of major interest in the remainder of this paper.

\begin{definition}[Proper zero-couplings]
    \label{def:proper}
    A zero-coupling $\gamma \in \zcoup(\mu,\nu)$ is called \emph{proper} if 
    \[
    \gamma\rbr{\cbr{0} \times \Rdmz}
    = 
    0
    .
    \]
    Let $\zprcoup(\mu,\nu)$ denote the set of proper zero-couplings and let $\zpmcoup(\mu,\nu) = \zprcoup(\mu,\nu) \cap \zmcoup(\mu,\nu)$ and $\zpcoup(\mu,\nu) = \zprcoup(\mu,\nu) \cap \zcmcoup(\mu,\nu)$ denote the sets of proper zero-couplings with monotone and cyclically monotone support, respectively.
\end{definition}

\cref{def:proper} is crucial because for applications involving transport maps between measures, only proper zero-couplings are relevant. Indeed, if $\gamma(\{0\}\times \Rdmz)$ is non-zero, the origin $\{0\}$ serves as an additional reservoir of mass when transporting $\mu$ onto $\nu$. As shall become clear in Theorem~\ref{thm:expr_zcmcoup}, this precludes the construction of a map $T$ that pushes $\mu$ forward to $\nu$.

Note that `being proper' is a property which concerns equivalence classes of zero-couplings: If the zero-couplings $\gamma_1$ and $\gamma_2$ belong to the same equivalence class $[\gamma]$, then $\gamma_1$ is proper if and only if $\gamma_2$ is proper. Additional facts about zero-couplings and their supports are collected in Appendix~\ref{section:seq_zcoup}.


\subsection{Stability}
\label{sec:stability}


To deal with transport problems between infinite measures, a common approach is to consider transport problems between finite approximations for which there are already results in the literature. We refer to \citet{huesmann2013optimal,huesmann2016transportrandom} and \citet{CatalanoLavenant2021} where the authors are interested in vague convergence and weak* convergence respectively.
This approach is all the more natural in our setting since we aim at studying transport problems between infinite measures in $\Mzp(\Rd)$ appearing as limits of rescaled probability measures in the definition of regular variation (Section~\ref{sec:EVTOT}).

Let $\Cbz(\Rd)$ denote the set of bounded and real-valued functions $f$ on $\Rdmz$ that vanish on a neighbourhood of the origin, i.e., for which there exists $r > 0$ such that $f(x) = 0$ whenever $\nbr{x} \le r$. The topology on $\Mzp(\Rd)$ is the weakest one for which the evaluation functionals $\mu \mapsto \mu(f) = \int f \, \diff \mu$ are continuous for all $f \in \mathcal{C}_0(\Rd)$. The space $\Mzp(\Rd)$ equipped with this topology is Polish \citep[Theorem~2.3]{hult2006regular}. For a sequence $(\mu_n)_n$ in $\Mzp(\Rd)$, the convergence $\mu_n \Mto \mu$ in $\Mzp(\Rd)$ as $n\to\infty$ is equivalent to the convergence $\mu_n(f) \to \mu(f)$ as $n\to\infty$ for all $f \in \mathcal{C}_0(\Rd)$. For convergence of measures in $\Mzp(\Rd)$, a version of the Portmanteau theorem and of Prohorov's theorem are available in \cite{hult2006regular} and are quoted in \cref{section:Mzp}.

Consider two sequences $\mu_n,\nu_n\in\Mzp(\Rd)$, $n\in\nat$, converging to $\mu,\nu\in\Mzp(\Rd)$, respectively, and a sequence of zero-couplings $\gamma_n\in\zcoup(\mu_n,\nu_n)$ and closed sets $T_n \subset \RdRd$ satisfying $\spt\gamma_n\subset T_n$.
The asymptotic behaviour of $\gamma_n$ and $T_n$ is described below in $\Mzp(\RdRd)$ and in the Fell topology, respectively.
\cref{cor:stability} is inspired by results from \citet[Theorem~1.1]{segers2022}, treating weakly converging sequences of probability measures and their couplings.

We follow \cite{Molchanov2017} for the description and properties of the Fell topology. For an open subset $\mbb{E}$ of $\Rd$, let $\Fell(\mbb{E})$ denote the set of all subsets of $\mbb{E}$ which are closed in the trace topology on $\mbb{E}$. 
For a subset $A$ of $\mbb{E}$, we consider the collections $\Fell_A=\{F\in\Fell(\mbb{E})\mid F\cap A \neq \emptyset\}$ and $\Fell^A=\{F\in\Fell(\mbb{E})\mid F\cap A = \emptyset\}$ of sets that respectively hit and miss $A$.
The Fell hit-and-miss topology is defined by the sub-base consisting of $\Fell_G$ for all open subsets $G$ of $\mbb{E}$ and $\Fell^K$ for all compact subsets $K$ of $\mbb{E}$. 
Let $F,F_n\in\Fell(\mbb{E})$, $n \in \nat$. The sequence $F_n$ $\Fell$-converges to $F$ as $n\to\infty$ in the Fell topology---we write $F_n\Fto F$---if for every open $G\subset\mbb{E}$ such that $F\in\Fell_G$ we have $F_n\in\Fell_G$ for all large $n$, and for every compact $K\subset\mbb{E}$ satisfying $F\in\Fell^K$ we have $F_n\in\Fell^K$ for all large $n$.

We write $\Fellcm$ for the collection of closed subsets of $\RdRd$ satisfying the cyclical monotonicity property in \cref{rel:cyclical_mononicity0} 
and $\Fellmcm$ for the collection of maximal cyclically monotone sets, which are the cyclically monotone sets that are not strictly included in any other cyclically monotone set---obviously any maximal cyclically monotone set is closed. Likewise, we write $\Fellmm$ for the collection of maximal monotone sets, that is, monotone sets as in \cref{eq:monotone} that are not strictly included in any other monotone set.


\begin{theorem}[Stability]
\label{cor:stability}
    Let $\mu_n,\nu_n\in\Mzp(\Rd)$ and 
    $\gamma_n\in\zcoup(\mu_n,\nu_n)$,  
    $n \in \NN$, be such that $\mu_n \vto \mu$ and $\nu_n \vto \nu$. 
    Let $T_n\in\Fell(\RdRd)$ be a sequence of closed sets in $\RdRd$ such that $\spt\gamma_n\subset T_n$ for each $n \in \NN$. 
    Then there exists an infinite subset $\mathrm{N}$ of $\NN$ such that
    \begin{enumerate}[label=(\alph*)]
    \item $\gamma_n\Mto\gamma$ as $n\to\infty$ in $\mathrm{N}$ for some $\gamma\in\zcoup(\mu,\nu)$,
    \item $T_n\Fto T$ as $n\to\infty$ in $\mathrm{N}$ for some $T\in\Fell({\RdRd})$,
    \item $\spt\gamma\subset T$.
    \end{enumerate}
    If we further assume
    that $\gamma_n\in\zcmcoup(\mu_n,\nu_n)$ and $T_n\in\Fellmcm$ for all $n \in \NN$, then all limit points of $\gamma_n$ belong to $\zcmcoup(\mu,\nu)$ while all limit points of $T_n$ lie in $\Fellmcm$.
\end{theorem}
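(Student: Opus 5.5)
The plan is to extract the two subsequences separately and then check that the two limits fit together.

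\emph{Step 1: tightness of $(\gamma_n)$.} First I would show that $(\gamma_n)$ is relatively compact in $\Mzp(\RdRd)$, using the Prohorov-type theorem of \cite{hult2006regular} quoted in \cref{section:Mzp}. This requires two bounds.
\begin{itemize}
\item Uniform boundedness away from the origin. The proof of Lemma~\ref{lm:zcoup_in_Mzp} gives, for $r'=r/\sqrt2$,
\[
\gamma_n\rbr{(\RdRd)\sauf\ball_{2d}(0,r)} \le \mu_n(\Rd\sauf\ball_d(0,r'))+\nu_n(\Rd\sauf\ball_d(0,r')).
\]
By the Portmanteau theorem applied to the closed sets $\{\nbr{x}\ge r'\}$, the right-hand side is eventually bounded.
\item Uniform control at infinity. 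Likewise, $\gamma_n$ of the complement of $\ball_d(0,M)\times\ball_d(0,M)$ is at most $\mu_n(\nbr{x}\ge M)+\nu_n(\nbr{x}\ge M)$. This is small uniformly in $n$ for $M$ large, by relative compactness of the convergent sequences $(\mu_n)$ and $(\nu_n)$.
\end{itemize}
Hence a subsequence satisfies $\gamma_n\Mto\gamma$ for some $\gamma\in\Mzp(\RdRd)$.

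\emph{Step 2: identifying the zero-marginals.} Next I would show $\varphi_1\gamma=\mu$ and $\varphi_2\gamma=\nu$. Take $f\in\Cbz(\Rd)$, vanishing on $\ball(0,r)$. Then $f\circ\proj_1$ is bounded and continuous on $(\RdRd)\sauf\{0\}$ and vanishes near $0$, so it lies in $\Cbz(\RdRd)$. Therefore
\[
\mu_n(f)=\gamma_n(f\circ\proj_1)\to\gamma(f\circ\proj_1)=\varphi_1\gamma(f).
\]
Since also $\mu_n(f)\to\mu(f)$, the measures $\varphi_1\gamma$ and $\mu$ agree on $\Cbz(\Rd)$. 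They are then equal as measures on $\Rdmz$, because both are finite on the sets $\{\nbr{x}>r\}$ and determined there by bounded continuous functions. The same argument gives $\varphi_2\gamma=\nu$, so $\gamma\in\zcoup(\mu,\nu)$.

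\emph{Step 3: Fell limit and support inclusion.} The Fell topology on $\Fell(\RdRd)$ is compact and metrizable \cite{Molchanov2017}, so a further subsequence satisfies $T_n\Fto T$; call the resulting index set $\mathrm{N}$. For (c), take $z\in\spt\gamma$, so that $\tilde\gamma(U)>0$ for every open $U\ni z$.
\begin{itemize}
\item If $z\ne0$, choose $U$ bounded away from $0$. Portmanteau gives $\liminf\gamma_n(U)\ge\gamma(U)>0$, so $U$ meets $\spt\gamma_n\subset T_n$ for all large $n\in\mathrm{N}$. If $z\notin T$, there would be a compact ball $K\ni z$ missing $T$, and then $T_n\cap K=\emptyset$ eventually, a contradiction. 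Hence $z\in T$.
\item If $z=0$, then $0$ is a limit of nonzero support points, and $T$ is closed, so $0\in T$.
\end{itemize}
This boundary case at the origin, which is where zero-couplings differ from ordinary couplings, needs the most care but causes no real difficulty.

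\emph{Step 4: the cyclically monotone addendum.} Assume $\gamma_n\in\zcmcoup(\mu_n,\nu_n)$ and $T_n\in\Fellmcm$.
\begin{itemize}
\item The Fell limit $T$ of cyclically monotone closed sets is cyclically monotone. Any finitely many points of $T$ are limits of points of $T_n$, because Fell convergence in locally compact spaces coincides with Painlevé–Kuratowski convergence. Inequality \eqref{rel:cyclical_mononicity0} then passes to the limit.
\item Maximality requires more. I would use that maximal cyclically monotone sets are exactly the subdifferentials $\partial\psi$ of closed proper convex functions, hence maximal monotone, and are nonempty. The Fell limit of maximal monotone sets is maximal monotone provided it is nonempty; this follows via Minty's parametrization $(x,y)\mapsto x+y$, which identifies such sets with graphs of $1$-Lipschitz maps $\Rd\to\Rd$. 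Nonemptiness of $T$ is the main obstacle: the $T_n$ might escape to infinity. One fix is Step 3, since $\spt\gamma\ne\emptyset$ unless $\mu=\nu=0$. Otherwise I would normalize using Minty maps: each $T_n$ meets the set $\{x+y=0\}$ at a point whose resolvent is bounded, by equicontinuity once one point stays bounded. So $T$ is a closed cyclically monotone set that is maximal monotone, hence maximal cyclically monotone.
\item Since every limit point $\gamma$ arises with some Fell limit $T$ along a further subsequence, $\spt\gamma\subset T$ is cyclically monotone, and so $\gamma\in\zcmcoup(\mu,\nu)$.
\end{itemize}
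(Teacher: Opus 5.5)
Your argument is correct and is essentially the paper's proof with the cited lemmas unpacked. Steps~1--2 reprove, for your particular sequence, the compactness of $\{(a,b,\zeta)\mid a\in K,\, b\in L,\, \zeta\in\zcoup(a,b)\}$ and the continuity of $\varphi_1,\varphi_2$ (\cref{segers2022:lemma4.1}, \cref{lm:varphi}). Step~3 is the lower semicontinuity of the support map (\cref{lem:sptlsc}). Step~4 is the closedness of $\Fellmcm\cup\{\emptyset\}$ in the Fell topology, combined with $\emptyset\ne\spt\gamma\subset T$.

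The only real variation is how you show that $\spt\gamma$ is cyclically monotone. You deduce it from $\spt\gamma\subset T$ with $T$ cyclically monotone. The paper instead uses closedness of the set of measures with cyclically monotone support, which does not involve the $T_n$ at all.

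One correction: drop your fallback for the case $\mu=\nu=0$. It cannot work, because in that case the last assertion is false. Take $\mu_n=\nu_n=0$, $\gamma_n=0$ and $T_n=\partial\psi_n=\Rd\times\{ne\}$, where $\psi_n(x)=n\inpr{e,x}$ and $e$ is a unit vector. Then the $T_n$ Fell-converge to $\emptyset\notin\Fellmcm$. The Minty points $(-ne,ne)$ are unbounded, so no equicontinuity argument can anchor them. The paper simply asserts that $\mu$ and $\nu$ are nonzero, and you need the same tacit assumption: $\mu$ or $\nu$ is not the zero measure.
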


Anticipating the uniqueness of the cyclically monotone zero-coupling asserted in \cref{thm:unique}, we prove a stronger result in the cyclically monotone case.
It will be of major interest in \cref{sec:EVTOT} to derive the convergences announced in the commutative diagram in \cref{fig:cd}. For $V \subset \Rd$ and $T \subset \RdRd$, we write $T \llcorner V = T \cap (V \times \Rd)$, to be thought of as the restriction of $T$ (seen as a multi-valued map) to $V$.

\begin{corollary}
\label{cv:cm_coupling}
    Besides the assumptions of \cref{cor:stability}, assume that $\gamma_n\in\zcmcoup(\mu_n,\nu_n)$ and $T_n\in\Fellmcm$ for every $n \in \NN$ and that there exist $\gamma\in\Mzp(\RdRd)$ and $T\in\Fellmcm$ such that $\zcmcoup(\mu,\nu)=\{\gamma\}$ and $\spt\gamma\subset T$. Then, writing $V = \intr(\spt \mu)$, we have
    \[
        \gamma_n\Mto\gamma 
        \qquad \text{and} \qquad 
        T_n \llcorner V \Fto T \llcorner V \: \text{ in } \Fell(V\times\Rd)
        \qquad
        \text{as } n\to\infty.
    \]
\end{corollary}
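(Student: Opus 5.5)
The plan is to use a sub-subsequence argument on top of \cref{cor:stability}, together with a rigidity property of maximal cyclically monotone sets on $V=\intr(\spt\mu)$. Both $\Mzp(\RdRd)$, which is Polish, and $\Fell(V\times\Rd)$, which is compact metrizable because $V\times\Rd$ is locally compact and second countable, are metrizable. So it suffices to show that every subsequence of $(\gamma_n, T_n\llcorner V)$ has a further subsequence converging to $(\gamma, T\llcorner V)$.

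Fix a subsequence and apply \cref{cor:stability} along it. This gives a further infinite index set $\mathrm{N}$ on which $\gamma_n\Mto\gamma'$ and $T_n\Fto T'$, with $\gamma'\in\zcmcoup(\mu,\nu)$, $T'\in\Fellmcm$ and $\spt\gamma'\subset T'$. Uniqueness gives $\gamma'=\gamma$, which settles the first convergence. For the second, we first observe that Fell convergence in $\RdRd$ passes to the trace: any open $G\subset V\times\Rd$ is open in $\RdRd$, and any compact $K\subset V\times\Rd$ is compact in $\RdRd$. Since $T_n\cap G\neq\emptyset$ if and only if $(T_n\llcorner V)\cap G\neq\emptyset$, and similarly for $K$, we get $T_n\llcorner V\Fto T'\llcorner V$ in $\Fell(V\times\Rd)$. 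It then remains to prove $T'\llcorner V=T\llcorner V$.

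This identification is the main step. By Rockafellar's theorem (\cref{section:CVX}), $T=\partial\psi$ and $T'=\partial\psi'$ for closed convex functions $\psi,\psi'$, and both contain $\spt\gamma$. Take $x\in\spt\mu\setminus\{0\}$ and a neighbourhood $U$ of $x$ bounded away from $0$. Then $\gamma(U\times\Rd)=\mu(U)>0$, so $\spt\gamma$ meets $U\times\Rd$. Hence $D:=\proj_1(\spt\gamma)\cap V$ is dense in $V$, and every $x\in D$ has $\partial\psi(x)\cap\partial\psi'(x)\neq\emptyset$. In particular $V\subset\cl\dom\psi$, so the open set $V$ lies in $\intr\cl\dom\psi=\intr\dom\psi$, and likewise for $\psi'$. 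Thus both functions are finite, locally Lipschitz, and have locally bounded subdifferentials on $V$.

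Now let $x\in V$ be a point where both $\psi$ and $\psi'$ are differentiable; such points have full Lebesgue measure in $V$. Choose $x_k\in D$ with $x_k\to x$ and $v_k\in\partial\psi(x_k)\cap\partial\psi'(x_k)$. The $v_k$ are bounded, and closedness of the graphs of $\partial\psi$ and $\partial\psi'$ shows that any limit point lies in $\partial\psi(x)\cap\partial\psi'(x)=\{\nabla\psi(x)\}\cap\{\nabla\psi'(x)\}$. Hence $\nabla\psi=\nabla\psi'$ almost everywhere on $V$, and the locally Lipschitz function $\psi-\psi'$ is constant on each connected component of $V$. Since the subdifferential at an interior point depends only on the function near that point (a local supporting affine minorant of a convex function is global), we conclude $\partial\psi\llcorner V=\partial\psi'\llcorner V$, that is, $T\llcorner V=T'\llcorner V$. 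This completes the sub-subsequence argument.

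I expect the main obstacle to be this identification step, in particular ensuring that $V$ lies in the interior of both domains so that gradients and local boundedness are available. The density of $\proj_1(\spt\gamma)$ in $\spt\mu$ is what makes it work.
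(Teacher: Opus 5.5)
Your argument is correct. Its outer structure matches the paper's proof: extract a subsequence with \cref{cor:stability}, identify $\gamma'=\gamma$ by uniqueness, identify every Fell limit point on $V$, and conclude by metrizability. The differences are in how the two inner steps are justified.

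\textbf{Restriction step.} To pass from Fell convergence in $\RdRd$ to Fell convergence of the restrictions, the paper cites continuity of $\point\llcorner V$ (Proposition~D.12 in \citet{segers2022}). You verify it directly from the hit-and-miss definition, which is legitimate because $V\times\Rd$ is open.

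\textbf{Identification step.} For $T'\llcorner V=T\llcorner V$, the paper uses only that $T$ and $T'$ are maximal \emph{monotone}. It invokes \cref{lem:STV}, a transcription of Lemma~4.2 in \citet{segers2022} that relies on properties of maximal monotone maps, such as near convexity of their domains. You instead exploit cyclic monotonicity through Rockafellar's representation $T=\partial\psi$, $T'=\partial\psi'$. Density of $\proj_1(\spt\gamma)$ in $V$ puts $V$ inside $\intr\dom\psi\cap\intr\dom\psi'$. It also forces $\nabla\psi=\nabla\psi'$ at common differentiability points, so $\psi-\psi'$ is locally constant on $V$ and the subdifferentials agree there.

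\textbf{Trade-offs.} Your route is self-contained within finite-dimensional convex analysis and needs no external monotone-operator lemma. The paper's route is shorter on the page, works for arbitrary maximal monotone sets, and through \cref{lem:STV} also gives the analogous statement on $\intr(\spt\nu)$.

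\textbf{Optional shortcut.} You can avoid the step that a locally Lipschitz function with almost everywhere vanishing gradient is locally constant. Your limiting argument, run at a differentiability point $x\in V$ of $\psi$ alone, already gives $\nabla\psi(x)\in\partial\psi'(x)$. Closedness of $\partial\psi'$ and the gradient-limit formula for $\partial\psi$ at interior points \citep[Theorem~25.6]{Rockafellar1970} then give $\partial\psi\subset\partial\psi'$ on $V$, and symmetry finishes.
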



\subsection*{Proofs}

\begin{proof}[Proof of \cref{cor:stability}]
We deal with the assertions (a) and (b,c) separately.
\smallskip

\noindent\emph{Proof of (a)} ---
    If $\mu_n, \nu_n$ $\Mzp$-converge to $\mu,\nu\in\Mzp$ respectively, then the sets $K=\{\mu_n:n\ge1\}\cup\{\mu\}$ and $L=\{\nu_n:n\ge1\}\cup\{\nu\}$ are both compact. By \cref{segers2022:lemma4.1}(c), the set $Z = \left\{ (a,b,\zeta )\mid a\in K, b\in L, \zeta \in \zcoup(a,b) \right\}$ is then compact too. 
    Since the triplets $(\mu_n,\nu_n,\gamma_n)$ belong to $Z$, there is an infinite subset  $\mathrm{N} \subset \nset$ and a limit point $(\mu',\nu',\gamma)\in Z$ such that $(\mu_n,\nu_n,\gamma_n)\to (\mu',\nu',\gamma)$ along $\mathrm{N}$, in $\Mz(\Rd)\times\Mz(\Rd)\times\Mz(\Rd\times\Rd)$. In particular, $\gamma_n\vto \gamma$ and, by definition of $Z$, also $\gamma\in\zcoup(\mu',\nu')$. Since, by assumption, $\mu_n$ and $\nu_n$ converge to $\mu$ and $\nu$, respectively, we obtain $\mu'=\mu$ and $\nu'=\nu$. The assertion follows.

    We turn to  the additional assumption that $\gamma_n \in \zcmcoup(\mu_n,\nu_n)$. 
    The set 
    \[ 
        \tilde Z = \cbr{ (a,b,\zeta )\mid a\in K, b\in L, \zeta \in \zcmcoup(a,b) }
    \]
    can be written as $\tilde Z = Z \cap \{ (a,b,\zeta )\mid a\in \Mz(\Rd), b\in \Mz(\Rd), \zeta \in \zcmcoup(a,b) \}$. The first set in the intersection is compact while, from \cref{segers2022:lemma4.1}(b), the second set is closed. Thus $\tilde Z$ is closed. Since $(\mu_n,\nu_n,\gamma_n)\in \tilde Z$ for all $n$, so does the limit point $(\mu,\nu,\gamma)$, whence $\gamma\in\zcmcoup(\mu,\nu)$.

\smallskip{}

\noindent\emph{Proof of (b) and (c)} ---
    Since $\Fell(\RdRd)$ is compact, there exist an infinite subset $\mrm{N}^\prime$ of $\mrm{N}$ in the proof of~(a) together with some $T$ in $\Fell(\RdRd)$ such that $T_n$ $\Fell$-converges to $T$ as $n\to\infty$ in $\mrm{N}^\prime$. Thanks to Lemma~3.2 in \cite{segers2022}, $\Fellmcm(\RdRd)\cup\{\emptyset\}$ is a closed subset of $\Fell(\RdRd)$. As a consequence, $T_n\in\Fellmcm(\RdRd),n\ge1$ implies $T\in\Fellmcm(\RdRd)\cup\{\emptyset\}$.
        Since $\mu$ and $\nu$ are nonzero measures and since $\gamma\in\zcmcoup(\mu,\nu)$, we have $\spt\gamma\neq\emptyset$. By \cref{lem:sptlsc}, we have $\spt\gamma\subset T$, so that $T$ is non empty, and $T$ must lie in  $\Fellmcm(\RdRd)$.
\end{proof}

\begin{proof}[Proof of Corollary~\ref{cv:cm_coupling}]
    The assertion $\gamma_n\Mto\gamma$ is a direct consequence of \cref{cor:stability}: if every subsequence of $(\gamma_n)_n$ contains a convergent subsequence and if all limits points of such subsequences are equal to $\gamma$, then $\gamma_n$ must converge to $\gamma$ in $\Mzp(\RdRd)$ along the whole sequence.

    Since $\gamma_n\in\zcmcoup(\mu_n,\nu_n)$ for all $n \in \nat$, \cref{cor:stability} implies that any limit point of $(T_n)_n$ belongs to $\Fellmcm(\RdRd)$ and thus to $\Fellmm(\RdRd)$, because any maximal cyclically monotone set is also maximal monotone \citep[Theorems~12.17 and~12.25]{rockafellarwets98}. Let $T_\infty \in \Fellmm$ be a limit point of $(T_n)_n$ as $n \to \infty$ for $n \in \mathrm{N}$, with $\mathrm{N}$ an infinite subset of $\nat$. By point~(c) of \cref{cor:stability}, we have $\spt\gamma\subset T_\infty$. For $T \in \Fellmcm \subset \Fellmm$ in the statement of the corollory, \cref{lem:STV} implies $T \llcorner V = T_\infty \llcorner V$. Since $V\subset\Rd$ is open, the restriction map $\point \llcorner V : \Fell(\RdRd) \to \Fell(V \times \Rd)$ is continuous  \citep[Proposition~D.12]{segers2022}. It follows that along the subsequence $\mathrm{N}$, the restrictions $T_n \llcorner V$ converge in $\Fell(V \times \Rd)$ to $T_\infty \llcorner V=T \llcorner V$. Since this is true for an arbitrary converging subsequence and since any subsequence contains a converging sequence (\cref{cor:stability}), we conclude that $T_n \llcorner V \to T \llcorner V$ in $\Fell(V \times \Rd)$ along the whole sequence, as required.
\end{proof}

\subsection{Existence and representation}
\label{sec:representation}
When a moment assumption is made on both zero-marginals $\mu$ and $\nu$, the existence of a cyclically monotone zero-coupling between $\mu$ and $\nu$ is a direct application of Theorems~A.5 and Theorem~A.13 in \citet{GuillenMou2019}.
Relying on Theorem~\ref{cor:stability}, which provides stability results and the existence of limit points for the sequence of couplings, we derive the same conclusion without making any moment assumption.

\begin{theorem}[Existence]
\label{thm:existence}
For all $\mu,\nu\in\Mzp(\Rd)$ we have $\zcmcoup(\mu,\nu) \neq \emptyset$.
\end{theorem}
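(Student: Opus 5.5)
The plan is to approximate $\mu$ and $\nu$ by finite measures, couple the approximations using McCann's theorem (or classical results on cyclically monotone couplings of finite measures), and pass to the limit with \cref{cor:stability}.

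\textbf{Approximating measures.} For $n \in \NN$ let $\mu_n$ and $\nu_n$ be the restrictions of $\mu$ and $\nu$ to $\Rd \sauf \ball(0,1/n)$. Both are finite measures, and $\mu_n \vto \mu$, $\nu_n \vto \nu$ in $\Mzp(\Rd)$. Indeed, for $f \in \Cbz(\Rd)$ vanishing on $\ball(0,r)$ we have $\mu_n(f) = \mu(f)$ as soon as $1/n < r$. If $\mu$ or $\nu$ is the zero measure the statement is trivial (take $\gamma = \delta_0$-type pieces, e.g.\ $\gamma = \delta_0 \otimes \nu$ restricted to $(\RdRd)\sauf\{0\}$, whose support lies in $\{0\}\times\Rd$ and is cyclically monotone), so we assume both are nonzero.

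\textbf{Finite couplings.} The masses $a_n = \mu_n(\Rd)$ and $b_n = \nu_n(\Rd)$ need not agree, and we balance them by adding mass at the origin. Set $m_n = \max(a_n,b_n)$ and define
\[
\bar\mu_n = \mu_n + (m_n - a_n)\delta_0, \qquad \bar\nu_n = \nu_n + (m_n-b_n)\delta_0 .
\]
These are finite measures on $\Rd$ of equal mass $m_n$. After normalising to probability measures, there exists a coupling $\pi_n \in \cmcoup(\bar\mu_n,\bar\nu_n)$. This is the classical existence of cyclically monotone couplings for arbitrary probability measures, without moment or Hausdorff-dimension assumptions, e.g.\ \citet{McCann1995}; it is obtained as the weak limit of optimal plans for discrete approximations and uses that closed supports with cyclically monotone discrete approximations stay cyclically monotone.

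Let $\gamma_n$ be the restriction of $\pi_n$ to $(\RdRd)\sauf\{0\}$. For every Borel $A \subset \Rdmz$ we get $\gamma_n(A\times\Rd) = \pi_n(A\times\Rd) = \bar\mu_n(A) = \mu_n(A)$, and likewise for the second margin. Hence $\gamma_n \in \zcoup(\mu_n,\nu_n)$. Since $\spt\gamma_n \subset \spt \pi_n$, the support is cyclically monotone, and so $\gamma_n \in \zcmcoup(\mu_n,\nu_n)$. For each $n$, choose $T_n \in \Fellmcm$ containing $\spt\pi_n$; it exists by Rockafellar's theorem, as the subdifferential of a closed convex function.

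\textbf{Passage to the limit.} Apply \cref{cor:stability} to $(\mu_n,\nu_n,\gamma_n,T_n)$. It yields a subsequence along which $\gamma_n \vto \gamma \in \zcoup(\mu,\nu)$. Under the additional cyclically monotone hypotheses, which hold here, every limit point lies in $\zcmcoup(\mu,\nu)$. Therefore $\zcmcoup(\mu,\nu)\neq\emptyset$.

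\textbf{Main obstacle.} The key ingredient is the existence of cyclically monotone couplings for finite measures without any moment assumptions. I would cite it rather than reprove it. If a self-contained argument is needed, first approximate $\bar\mu_n$ and $\bar\nu_n$ by finitely supported measures of equal mass. Optimal discrete plans exist and have cyclically monotone support. Then take a weak limit: tightness is automatic for probability couplings with fixed margins, and cyclical monotonicity passes to limits of supports because every point of the limit support is a limit of support points.
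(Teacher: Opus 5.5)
Your proof is correct and follows essentially the same route as the paper. You truncate $\mu,\nu$ outside $\ball(0,1/n)$, equalize the masses near the origin, invoke McCann's Theorem~6 and Rockafellar's theorem, restrict to $(\RdRd)\sauf\{0\}$, and pass to the limit with \cref{cor:stability}. The only difference is cosmetic: you balance masses with a point mass at $0$ (harmless, since zero-marginals ignore $\{0\}\times\Rd$, and it keeps $\mu_n,\nu_n$ exactly equal to the truncations), whereas the paper spreads the deficit uniformly over the ball $B_n$, which yields genuine couplings charging neither axis at the price of slightly perturbed approximants.
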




Recall the notion of the subdifferential of a convex function in \cref{eq:subdiff}.
Relying on Rockafellar's Theorem~\ref{thm:rockafellar}, which establishes a connection between cyclical monotonicity and subdifferentials of closed convex functions, we derive a representation result for cyclically monotone zero-couplings similar to Proposition~10 in \citet{McCann1995}. Recall \cref{def:proper} of proper zero-couplings and recall that for $\mu \in \Mz(\Rd)$, the measure $\tilde{\mu}$ is the extension of $\mu$ to $\Rd$ determined by $\tilde{\mu}(\cbr{0}) = 0$, while $\res m$ denotes the restriction to $\Rdmz$ of a measure $m$ defined on $\Rd$. 
Let $\Id$ be the identity map on $\Rd$, and for a map $f$ defined on some subset of $\Rd$, put $(\Id \times f)(x) = (x, f(x))$ for $x \in \dom f$. Here and throughout, $\dom f$ denotes the subset of $\Rd$ where $f$ is defined (except for convex functions $\psi$ from $\Rd$ into $\reals \cup \cbr{\infty}$, where $\dom \psi = \cbr{x \in \Rd \mid \psi(x) < \infty}$). 
For a measure $m$ on $\Rd$ and a measurable map $f$ from some subset of $\Rd$ to $\Rd$, the measure $(\Id \times f)_{\#}m$ on $\RdRd$ is determined for $A, B \in \borel(\Rd)$ by
\[
    (\Id \times f)_{\#}m (A \times B)
    = m \rbr{A \cap f^{-1}(B)}
    = m \rbr{\cbr{x \in A \cap \dom f: f(x) \in B}}.
\]
If $m$ is a measure defined on some subset $M$ of $\Rd$ and if $f$ is a measurable function on some subset $\dom f$ of $\Rd$, we use $f_\# m$ to mean the measure on $\Rd$ defined as 
\[ 
    f_\# m(A) = m(M \cap f^{-1}(A)) = m(\cbr{x \in M \cap \dom f : f(x) \in A}),
\] 
for Borel sets $A$.
Also, we extend common terminology and call the measure $f_\# M$ the push-forward measure of $m$ by the mapping $f$, and if $\lambda= f_\# m$ we shall say that $f$ pushes $m$ forward to $\lambda$. For instance, the measure $(\nabla \psi)_{\#} \mu$ in \cref{eq:proper} below is defined for $A \in \borel(\Rd)$ by
\[
    (\nabla \psi)_{\#} \mu(A) = \mu \rbr{(\nabla \psi)^{-1}(A) \sauf \cbr{0}}
    \text{ with }
    (\nabla \psi)^{-1}(A) = \cbr{x \in \dom \nabla \psi \mid \nabla\psi(x) \in A}.
\]





By Rockafellar's Theorem \ref{thm:rockafellar}, any cyclically monotone set such as $\spt \gamma$ in the statement below is contained in the subdifferential $\partial \psi$ of some closed convex function $\psi$. 

\begin{theorem}[Representation]
    \label{thm:expr_zcmcoup}
    Let $\mu,\nu\in\Mzp(\Rd)$ and $\gamma\in\zcmcoup(\mu,\nu)$, and assume that $\mu$ vanishes on sets of Hausdorff dimension at most $d-1$. Then for any closed convex function $\psi: \Rd \to \reals \cup \cbr{\infty}$ satisfying $\spt\gamma\subset \partial\psi$, we have $\mu(\Rdmz \sauf \dom \nabla\psi) = 0$. The measure $\nu$ can be written as the push-forward of $\mu$ by $\nabla\psi$, up to a residual term: for every Borel set $B\subset\Rdmz$,
    \[
        \nu(B) = (\nabla\psi)_{\#} \mu(B) + \gamma(\cbr{0} \times B). 
    \]
    In addition, $\gamma$ is uniquely determined by $\nabla \psi, \mu, \nu$ through the identities
    \begin{align*}
        \gamma(A \times B) 
        &= (\Id \times \nabla\psi)_{\#}\mu(A\times B)
        = \mu \rbr{A \cap (\nabla\psi)^{-1}(B)}, &&
        A \in \borel(\Rdmz), \, B \in \borel(\Rd), \\
        \gamma(\cbr{0} \times B)
        &= \nu(B) - (\nabla\psi)_{\#} \mu(B), &&
        B \in \borel(\Rd), \, 0 \not\in \cl(B).
    \end{align*}
    %
    Finally, if $\gamma$ is proper, i.e., $\gamma\in\zpcoup(\mu,\nu)$, then we have the \emph{proper representation} 
    \begin{equation}
        \label{eq:proper}
        \gamma = \res \rbr{ (\Id \times \nabla\psi)_{\#}\mu }
        \quad \text{and} \quad
        \nu = \res \rbr{(\nabla \psi)_{\#}\mu}.
    \end{equation}
\end{theorem}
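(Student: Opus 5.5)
The plan is to follow the argument of Proposition~10 in \citet{McCann1995}, working on the sets bounded away from the origin where all masses are finite.

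\textbf{Step 1: $\nabla\psi$ exists $\mu$-almost everywhere.} Let $\psi$ be closed convex with $\spt\gamma\subset\partial\psi$. Since $\gamma$ is concentrated on its support, for $A\in\borel(\Rdmz)$ we have
\[
\mu(A)=\gamma(A\times\Rd)=\gamma\bigl((A\times\Rd)\cap\partial\psi\bigr).
\]
Hence $\mu$ is concentrated on $\proj_1(\partial\psi)\setminus\{0\}\subset\dom\psi$. By \cref{section:CVX}, the set of points of $\dom\psi$ where $\psi$ is not differentiable splits into two parts. The part inside $\intr\dom\psi$ has Hausdorff dimension at most $d-1$ (Alexandrov/Zajíček-type result). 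The part on $\bnd\dom\psi$ lies in the boundary of a convex set, which also has dimension at most $d-1$. Since $\mu$ vanishes on such sets, $\mu(\Rdmz\setminus\dom\nabla\psi)=0$. I will take $\dom\nabla\psi$ to be the set of points of differentiability in $\intr\dom\psi$; this set is Borel and $\nabla\psi$ is Borel on it.

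\textbf{Step 2: $\gamma$ on $\Rdmz\times\Rd$ is a graph measure.} Let $D=\dom\nabla\psi\setminus\{0\}$. For $x\in D$ we have $\partial\psi(x)=\{\nabla\psi(x)\}$. Therefore, $\gamma$-almost every point of $\Rdmz\times\Rd$ lies in the graph $\{(x,\nabla\psi(x)):x\in D\}$; the exceptional set has $\gamma$-measure at most $\mu(\Rdmz\setminus D)=0$. Then, for $A\in\borel(\Rdmz)$ and $B\in\borel(\Rd)$,
\[
\gamma(A\times B)=\gamma\bigl(\{(x,y): x\in A\cap D,\ y=\nabla\psi(x)\in B\}\bigr)=\gamma\bigl((A\cap(\nabla\psi)^{-1}(B))\times\Rd\bigr)=\mu\bigl(A\cap(\nabla\psi)^{-1}(B)\bigr).
\]
This is the first identity. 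The sets involved are measurable because $D$ is Borel and $\nabla\psi$ is Borel. The step that needs care is that $\gamma$ is only defined away from $0\in\RdRd$, so one should check that $A\times B$ avoids the origin. It does, since $A\subset\Rdmz$.

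\textbf{Step 3: residual term and uniqueness.} For $B\in\borel(\Rdmz)$, split $\Rd\times B=(\Rdmz\times B)\cup(\{0\}\times B)$. This gives
\[
\nu(B)=\gamma(\Rdmz\times B)+\gamma(\{0\}\times B)=(\nabla\psi)_\#\mu(B)+\gamma(\{0\}\times B).
\]
When $0\notin\cl(B)$, both $\nu(B)$ and $(\nabla\psi)_\#\mu(B)\le\nu(B)$ are finite, so the subtraction formula holds.

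Next, $\gamma$ is determined by its values on the rectangles $A\times B$ with $A\subset\Rdmz$ and on $\{0\}\times B$ with $0\notin\cl B$. These rectangles generate $\borel((\RdRd)\setminus\{0\})$, and $\gamma$ is $\sigma$-finite on the family of sets bounded away from the origin (\cref{lm:zcoup_in_Mzp}). Uniqueness then follows from a $\pi$-$\lambda$ argument. The sets $\{0\}\times B$ with $0\in\cl B$ but $0\notin B$ are recovered by countable unions over annuli.

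\textbf{Step 4: proper case.} If $\gamma(\{0\}\times\Rdmz)=0$, the residual vanishes, and the displayed identities give $\nu=\res((\nabla\psi)_\#\mu)$ and $\gamma=\res((\Id\times\nabla\psi)_\#\mu)$. For the latter, the mass of $(\Id\times\nabla\psi)_\#\mu$ sits on $\Rdmz\times\Rd$, where it agrees with $\gamma$, while $\gamma$ puts no mass on $\{0\}\times\Rdmz$.

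The main obstacle is Step 1. It requires the differentiability result to cover boundary points of $\dom\psi$ and to give Borel measurability of $\dom\nabla\psi$; I will take both from the convex-analysis appendix.
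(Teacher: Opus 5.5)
Your proposal is correct and follows essentially the same route as the paper's proof, which is itself modelled on McCann's Proposition~10: you show $\mu$ is carried by $\dom\nabla\psi$ using the Hausdorff-dimension bounds on the non-differentiability set in $\intr\dom\psi$ and on the boundary of the convex set $\dom\psi$, identify $\gamma$ on $\Rdmz\times\Rd$ as the graph measure of $\nabla\psi$, read off the residual term on $\{0\}\times\Rdmz$ (with the subtraction justified on sets bounded away from the origin), and then specialise to the proper case. The only cosmetic differences are that you use the concentration of $\mu$ on $\dom\partial\psi$ directly rather than the paper's inclusion $\spt\mu\subset\cl(\dom\psi)$, and that you prove uniqueness with a single $\pi$--$\lambda$ argument where the paper treats the two pieces $\Rdmz\times\Rd$ and $\{0\}\times\Rdmz$ separately.
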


Rephrasing the last part of the theorem, if a proper cyclically monotone zero-coupling $\gamma$  of $\mu,\nu \in \Mzp(\Rd)$ exists, then there is a closed convex function $\psi$ whose gradient $\nabla\psi$ pushes $\mu$ forward to a measure $\bar \nu$ on $\Rd$ of which the restriction to $\Rdmz$ is equal to $\nu$.

Contrary to McCann's result for cyclically monotone couplings between finite measures with equal mass in Theorem~\ref{thm:McCann}, there is in \cref{thm:expr_zcmcoup} an extra term $\gamma(\cbr{0} \times B)$. It can be interpreted as assigning mass to $\{0\}\times\Rdmz$ when we need to ``extract'' mass from the origin for the cyclically monotone zero-coupling to exist. Example~\ref{c-ex:existence_push_forward} presents a case where this term is non-zero and the zero-coupling is not proper.

\subsection*{Proofs}


\begin{proof}[Proof of Theorem~\ref{thm:existence}]
    If $\mu$ and $\nu$ are both the zero measure on $\Rdmz$, then the zero measure on $\RdRdmz$ is a zero-coupling of $\mu$ and $\nu$. So assume at least one of $\mu$ and $\nu$ is not the zero measure.

    We construct two sequences $\mu_n,\nu_n\in\Mzp(\Rd)$, $n\in\nat$, of finite and equal mass converging to $\mu,\nu$ in $\Mzp(\Rd)$, respectively, together with a sequence $\tilde\gamma_n\in\Pi_{cm}(\tilde\mu_n,\tilde\nu_n)$,
    $n\in\nat$, of cyclically monotone couplings between $\tilde\mu_n$ and $\tilde\nu_n$ whose restrictions $\gamma_n=\res\tilde\gamma_n$ to $\RdRdmz$ are zero-couplings between $\mu_n=\res\tilde\mu_n$ and $\nu_n=\res\tilde\nu_n$; recall that $\tilde{\mu}_n$ is the extension of $\mu_n$ from $\Rdmz$ to $\Rd$ by $\tilde{\mu}_n(\cbr{0})=0$, and similarly for the other measures. The result then follows by \cref{cor:stability} upon taking the limit $\gamma$ of a converging subsequence of $\gamma_n$.
    
    For $n \in \mbb{N}$, let $B_n$ be the closed ball in $\Rd$ with center $0$ and radius $1/n$. Define $\tilde\mu_n = \tilde\mu\rbr{\point \cap B_n^c}$ and $\tilde{\nu}_n = \tilde\nu\rbr{\point \cap B_n^c}$. These are measures on $\Rd$ that coincide with $\mu$ and $\nu$, respectively, on the complement of $B_n$. We have $\mu_n=\res\tilde\mu_n \Mto \mu$ as $n \to \infty$,
    since for every $f \in \Cbz(\Rd)$, we have $\tilde\mu_n(f) = \mu(f)$ as soon as $n \in \nat$ is sufficiently large such that $f$ vanishes on $B_n$. Likewise, we have $\nu_n=\res\tilde\nu_n \Mto \nu$.

    It is clear that both $\tilde\mu_n$ and $\tilde\nu_n$ have finite total mass since they originate from measures lying in $\Mzp(\Rd)$, restricted to a set bounded away from the origin. If $\tilde\mu_n(\Rd)$ and $\tilde\nu_n(\Rd)$ differ, we modify the one with the smallest total mass. We write the reasoning for the case $\tilde\mu_n(\Rd) < \tilde\nu_n(\Rd)$; the same argument can be used in the other case.
    Let $\delta_n = \tilde\nu_n(\Rd) - \tilde\mu_n(\Rd) \in (0,\infty)$ and write $\tilde\kappa_n$ for the Lebesgue-uniform probability distribution on $B_n$.
    Replace $\tilde\mu_n$ by $\tilde\mu_n + \delta_n\cdot\tilde\kappa_n$. The same argument as above then still gives $\res\tilde\mu_n \Mto \mu$ as $n \to \infty$, but this time we have $a_n=\tilde\mu_n(\Rd)=\tilde\nu_n(\Rd)$. Since $\nu$ is not the zero measure, we have $a_n > 0$ for all sufficiently large $n$.
    
    We can now apply Theorem~6 in \cite{McCann1995} to the probability measures $\hat{\mu}_n = a_n^{-1} \tilde\mu_n$ and $\hat{\nu}_n = a_n^{-1} \tilde\nu_n$. It yields the existence of some $\hat{\pi}_n \in \Pi(\hat{\mu}_n,\hat{\nu}_n)$ with cyclically monotone support. The measure $\tilde\gamma_n = a_n \hat{\pi}_n$ then belongs to $\Pi(\tilde\mu_n,\tilde\nu_n)$ and has the same cyclically monotone support as $\hat{\pi}_n$. By Rockafellar's Theorem~\ref{thm:rockafellar}, there exists a closed convex function $\psi_n$ on $\Rd$ such that the set $\spt \gamma_n=\spt\tilde\gamma_n$ is contained in $T_n = \partial \psi_n\in\Fellmcm(\RdRd)$. 
    One may notice that $\tilde\gamma_n$ is a true coupling between $\tilde\mu_n$ and $\tilde\nu_n$ seen as Borel measures on $\Rd$ putting no mass on $\{0\}$; indeed, by construction,  $\tilde\gamma_n$ puts no mass on $(\{0\}\times\Rd) \cup (\Rd\times\{0\})$. Its restriction $\gamma_n = \res \tilde\gamma_n$ to $\RdRdmz$ lies in $\zcmcoup(\mu_n,\nu_n)$ and we have $\spt \gamma_n=\spt\tilde\gamma_n$.
    
    By Corollary~\ref{cor:stability}, there exist an infinite subset $\mrm{N}$ of $\mbb{N}$ and a measure $\gamma$ in $\zcmcoup(\mu,\nu)$ such that $\gamma_n$ $\Mzp$-converges to $\gamma$ as $n\to\infty$ in $\mrm{N}$.
\end{proof}


\begin{proof}[Proof of \cref{thm:expr_zcmcoup}]
    The proof is largely inspired by the one of Proposition~10 in \cite{McCann1995}.
    We split $\RdRdmz$ into $\Rdmz\times\Rd$ and $\{0\}\times\Rdmz$. 
    
    \smallskip{}
    
    \noindent\emph{Restriction of $\gamma$ to $\Rdmz\times\Rd$.} ---
    We first consider $\gamma$ on subsets of $\Rdmz\times\Rd$.
    Let 
    \begin{align*}
        S
        &=\cbr{(x,\nabla\psi(x)):x\in\dom\nabla\psi\cap\Rdmz} \\
        &=\rbr{(\dom\nabla\psi\cap \Rdmz)\times\Rd}\cap\partial\psi.
        \end{align*}
    Following the argument from the proof of Proposition~10 in \cite{McCann1995}, the measurability of $\nabla\psi$ is manifest since it coincides with the pointwise limit of a sequence of continuous approximants
    \[
    \inpr{\nabla\psi(x),z}
    =
    \lim_{n\to\infty} n \brbr{ \psi(x+z/n) - \psi(x) },
    \]
    so that $\dom\nabla\psi$ belongs to $\borel(\Rdmz)$.
    Since $\partial\psi$ is closed and contains $\spt\gamma$, the set $S $ lies in $\borel(\RdRdmz)$ and contains the set 
    \[ 
    \tilde S = \rbr{\rbr{\dom\nabla\psi\cap\Rdmz} \times\Rd} \cap \spt\gamma.
    \]
    The latter set satisfies 
    \begin{equation}
    \label{eq:gammatSc0}
        \gamma \brbr{\tilde S^c \cap \rbr{\Rdmz\times\Rd}} 
        = \tilde\mu \rbr{(\dom\nabla\psi)^c} 
        = 0. 
    \end{equation}
    Indeed, by \cref{lem:supp-zero-proj} and \cref{eq:domains}, $\spt\mu\subset\cl(\dom\partial\psi) = \cl(\dom\psi)$. The latter closure is convex and its boundary has therefore Hausdorff dimension at most $d-1$.
    Moreover, a convex function is differentiable in the interior of its domain except perhaps at a set of Hausdorff dimension at most $d-1$, see \citet[(F) and (3.2)]{AndersonKlee1952}. It follows that $\tilde\mu((\dom\nabla\psi)^c) = \tilde\mu(\cl(\dom\psi)\sauf\dom\nabla\psi) = 0$, confirming \cref{eq:gammatSc0}.
    
    Thus, for every $A\in\borel(\Rdmz)$ and $B\in\borel(\Rd)$ we have
    \begin{align*}
        \gamma(A\times B) 
        = \gamma((A\times B)\cap S)
        &= \gamma\rbr{\rbr{A\cap(\nabla\psi)^{-1}(B)}\times\Rd} \\
        &= \mu\rbr{A\cap(\nabla\psi)^{-1}(B)}
        = (\Id\times\nabla\psi)_{\#}\mu(A\times B)
            .
    \end{align*}
    By the monotone class theorem, this identity uniquely determines the restriction of $\gamma$ to $\Rdmz \times \Rd$ as being equal to the restriction of $(\Id\times\nabla\psi)_{\#}\mu$ to $\Rdmz \times \Rd$.
    
    \medskip{}
    
    \noindent\emph{Restriction of $\gamma$ to $\{0\}\times\Rdmz$.} --- 
    We now deal with subsets of $\{0\}\times\Rdmz$.
    Let $B\in\borel(\Rdmz)$. 
    Since $\nu$ is the right zero-margin of $\gamma$, we can write successively
    \begin{align*}
    \nu(B) 
    &= \gamma(\Rd\times B) \\
    &= \gamma(\Rdmz\times B) + \gamma(\{0\}\times B) \\
    &= (\Id\times\nabla\psi)_\#\mu(\Rdmz\times B) + \gamma(\{0\}\times B),
    \end{align*}
    where we used the representation of $\gamma$ on $\Rdmz \times \Rd$ in the last equality.
    This proves the first assertion of the statement.
    
    Assume now that $0\notin\cl B$. Since $B$ and $\Rdmz\times B$ are bounded away from the origin in $\Rd$ and $\RdRd$, respectively, both $\nu(B)$ and $\gamma(\Rdmz\times B)$ are finite. Hence we have 
    \begin{equation} 
    \label{eq:gam0B}
    \gamma(\{0\}\times B) 
    = \nu(B) - (\Id\times\nabla\psi)_\#\mu(\Rdmz\times B). 
    \end{equation}
    This uniquely determines $\gamma$ on $\cbr{0} \times \Rdmz$, since for any $A \subset \borel(\Rdmz)$ we can write $\gamma(\cbr{0} \times A) = \lim_{n\to\infty} \gamma(\cbr{0} \times (A \setminus \ball_{0,1/n}))$.

    \medskip{}

    \noindent\emph{Definition of $\gamma$ on $\RdRdmz$.} ---
    Any Borel set $E$ of $\RdRdmz$ can be partitioned into two pieces, $E \cap (\Rdmz \times \Rd)$ and $E \cap (\cbr{0} \times \Rdmz)$. The mass that $\gamma$ assigns to each of these pieces is uniquely determined by the representations in the two previous paragraphs. 

    \medskip{}

    \noindent\emph{Proper representation.} ---
    Assume $\gamma\in\zpcoup(\mu,\nu)$, so $\gamma(\cbr{0} \times \Rdmz) = 0$, i.e., the restriction of $\gamma$ to $\{0\}\times\Rdmz$ is the zero measure. The same is true for $(\Id \times \nabla\psi)_{\#}\mu$, since $\mu$ is only defined on $\Rdmz$ and thus, by definition,
    \[
        (\Id \times \nabla \psi)_{\#}\mu\rbr{\cbr{0} \times \Rd}
        = \mu \rbr{\Rdmz \cap \rbr{\cbr{0} \times \dom \psi}}
        = \mu (\emptyset) = 0.
    \]
    Moreover, we already know that the measures $\gamma$ and $(\Id \times \nabla\psi)_{\#}\mu$ coincide on $\Rdmz \times \Rd$. Hence, both measures coincide on the union of $\cbr{0} \times \Rdmz$ and $\Rdmz \times \Rd$, which is $\RdRdmz$, and therefore $\gamma = \res \rbr{(\Id \times \nabla\psi)_{\#}\mu}$.

    We use the latter identity to find a representation of $\nu$, the right zero-margin of $\gamma$. For any $B \in \borel(\Rdmz)$, we have
    \[
        \nu(B) 
        = \gamma(\Rd \times B)
        = (\Id \times \nabla\psi)_{\#}\mu(B)
        = \mu \rbr{\Rdmz \cap (\nabla\psi)^{-1}(B)}
        = (\nabla\psi)_{\#}\mu(B).
    \]
    It finally follows that $\nu = \res ((\nabla \psi)_{\#}\mu)$, as announced.
\end{proof}

\section{Proper zero-couplings and representations}
\label{sec:proper-zcoup}





The interest in the proper zero-coupling and the proper representation in \cref{eq:proper} is that we can write the ``target measure'' $\nu \in \Mz(\Rd)$ as a push-forward of the ``reference measure'' $\mu \in \Mz(\Rd)$, i.e., $\nu = \res((\nabla\psi)_{\#}\mu)$, or, more explicitly,
\[
    \forall B \in \borel(\Rdmz): \qquad
    \nu(B) 
    = (\nabla\psi)_{\#}\mu(B) 
    = \mu\rbr{(\nabla\psi)^{-1}(B) \sauf \cbr{0}},
\]
see also the comment after \cref{thm:expr_zcmcoup}. In such a case, $\nabla\psi$ is indeed a transport map pushing forward the reference $\mu$ to the target $\nu$. This opens a way towards optimal-transport based statistical methodology without moment conditions as reviewed in \cite{Hallin2022} but then for Lévy measures and for certain infinite measures arising in multivariate regular variation, as developed in \cref{sec:EVTOT} below.


For finite measures $\mu$ and $\nu$, a sufficient criterion for the existence of such a push-forward representation is that $\mu$ is sufficiently smooth and has at least as much mass as $\nu$. This follows from the Brenier--McCann Theorem \cite{brenier1987decomposition,brenier1991polar,McCann1995}. 

\begin{proposition}
    \label{prop:existpropcoupfinite}
    For $\mu,\nu \in \Mz(\Rd)$ with $\mu$ vanishing on sets of Hausdorff dimension at most $d-1$, assume that $\nu(\Rdmz) \le \mu(\Rdmz) < \infty$. Then there exists a closed convex function $\psi:\Rd\to\rset\cup\cbr{0}$ satisfying $\nu = \res((\nabla \psi)_{\#}\mu)$. Moreover, $\mu(\Rdmz \sauf \dom \nabla\psi) = 0$ and the measure $\gamma = \res((\Id \times \nabla\psi)_{\#}\mu)$ belongs to $\zpcoup(\mu,\nu)$. 
\end{proposition}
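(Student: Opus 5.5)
The plan is to reduce to McCann's theorem (\cref{thm:McCann}) for probability measures on $\Rd$, using the origin to absorb the mass excess of $\mu$ over $\nu$. If $\mu(\Rdmz)=0$, then $\nu=0$ as well, and any closed convex function such as $\psi\equiv 0$ works; so assume $a := \mu(\Rdmz) \in (0,\infty)$ and set $\delta = a - \nu(\Rdmz) \ge 0$. Let $\tilde\mu$ be the extension of $\mu$ with $\tilde\mu(\cbr{0})=0$, and define $\bar\nu = \tilde\nu + \delta\,\delta_0$, where $\delta_0$ is the Dirac mass at the origin. Then $\tilde\mu$ and $\bar\nu$ are finite Borel measures on $\Rd$ of equal mass $a$. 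Moreover, $P = a^{-1}\tilde\mu$ vanishes on sets of Hausdorff dimension at most $d-1$, because $\cbr{0}$ is such a set when $d\ge 1$ and $\mu$ vanishes on such sets by assumption.

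We apply \cref{thm:McCann} to $P$ and $Q = a^{-1}\bar\nu$. This gives a unique $\pi\in\cmcoup(P,Q)$, and Rockafellar's \cref{thm:rockafellar} supplies a closed convex $\psi$ with $\spt\pi\subset\partial\psi$. McCann's theorem then states that $(\nabla\psi)_\# P = Q$, with $\nabla\psi$ defined $P$-a.e. Multiplying by $a$ gives $(\nabla\psi)_\#\tilde\mu = \bar\nu$ and $\tilde\mu(\Rd\sauf\dom\nabla\psi)=0$, hence $\mu(\Rdmz\sauf\dom\nabla\psi)=0$. For $B\in\borel(\Rdmz)$ we have $\bar\nu(B)=\nu(B)$, and $(\nabla\psi)_\#\tilde\mu(B) = \mu((\nabla\psi)^{-1}(B)\sauf\cbr{0}) = (\nabla\psi)_\#\mu(B)$ in the paper's convention. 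This gives $\nu = \res((\nabla\psi)_\#\mu)$. Note that $\psi$ takes values in $\reals\cup\cbr{\infty}$; the codomain ``$\rset\cup\cbr{0}$'' in the statement should be read that way.

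For the coupling claim, set $\gamma = \res((\Id\times\nabla\psi)_\#\mu)$, which equals $a\,\res\pi$ as measures on $\RdRdmz$. We then check three properties.
\begin{enumerate}
\item \emph{Zero-marginals.} For $A\in\borel(\Rdmz)$ we get $\gamma(A\times\Rd)=\mu(A\cap\dom\nabla\psi)=\mu(A)$. Also $\gamma(\Rd\times A)=\mu((\nabla\psi)^{-1}(A)\sauf\cbr{0})=\nu(A)$ by the previous paragraph.
\item \emph{Properness.} $\gamma(\cbr{0}\times\Rdmz)=\mu(\emptyset)=0$ directly from the definition, exactly as in the last part of the proof of \cref{thm:expr_zcmcoup}.
\item \emph{Cyclical monotonicity of the support.} Since $\gamma\le a\pi$ on $\RdRdmz$, every point of $\spt\gamma$ (the support of $\tilde\gamma$) lies in $\spt\pi\subset\partial\psi$. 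Subsets of the cyclically monotone set $\partial\psi$ are cyclically monotone, so $\gamma\in\zpcoup(\mu,\nu)$.
\end{enumerate}

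The main obstacle is bookkeeping rather than analysis: keeping track of the excess mass $\delta$ placed at the origin on the target side, and confirming that it disappears under $\res$. Mass of $\mu$ sent to $0$ appears in $\gamma$ on $\Rdmz\times\cbr{0}$, which is allowed, and it contributes nothing to $\nu$ on $\Rdmz$. The one point needing care is that $P$ must not charge $\cbr{0}$, so that McCann's hypothesis holds. This is ensured by using $\tilde\mu$ and putting the extra mass only on the second measure, which is exactly why the hypothesis is $\nu(\Rdmz)\le\mu(\Rdmz)$ and not the reverse.
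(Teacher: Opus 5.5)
Your proof is correct and follows the paper's argument: you extend $\mu$ by $\tilde\mu(\cbr{0})=0$, place the mass deficit $\mu(\Rdmz)-\nu(\Rdmz)$ of $\nu$ at the origin, apply McCann's theorem to the normalized measures, and restrict back to the punctured space. Your explicit check that $\spt\gamma\subset\partial\psi$, so that $\gamma$ really has cyclically monotone support, is a welcome addition, since the paper asserts this but only verifies properness and the zero-marginals; the identity $\gamma=a\,\res\pi$ you use there needs a word of justification, which comes from the uniqueness in \cref{thm:McCann} because $(\Id\times\nabla\psi)_{\#}P$ is itself a coupling of $P$ and $Q$ supported in the closed set $\partial\psi$.
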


\begin{remark}
    The condition $\nu(\Rdmz) \le \mu(\Rdmz)$ in \cref{prop:existpropcoupfinite} is actually necessary for the desired $\psi$ to exist, even when no assumption is made on $\mu$. Indeed, if $\nu = \res((\nabla\psi)_{\#}\mu)$, then $\nu(\Rdmz) = (\nabla\psi)_{\#}\mu(\Rdmz) = \mu((\nabla\psi)^{-1}(\Rdmz) \sauf \cbr{0}) \le \mu(\Rdmz)$. 
\end{remark}

In case $\mu$ and $\nu$ have infinite mass, 
the situation is more complicated. The problem is that the origin acts as a kind of barrier across which a (cyclically) monotone transport plan cannot move an infinite amount of mass, 
if the measures are to assign finite mass to sets bounded away from the origin. Example~\ref{c-ex:existence_push_forward} illustrates this situation in one dimension, when the measures $\mu$ and $\nu$ put all their mass on opposite sides of the origin.

Recall the definition of a (cyclically) monotone subset of $\RdRd$ in \cref{eq:monotone,rel:cyclical_mononicity0} and the collections of proper zero-couplings $\zpmcoup(\mu,\nu)$ and $\zpcoup(\mu,\nu)$ in \cref{def:proper} with (cyclically) monotone support. By the following lemma, the existence of a proper zero-coupling with monotone support between non-finite measures $\mu$ and $\nu$ implies that (almost) any point in the support of $\nu$ can be ``seen'' from a point in the support of $\mu$.

\begin{lemma}[Necessary condition for proper, monotone zero-couplings]
    \label{lem:necessary-proper}
    Let $\mu, \nu \in \Mz(\Rd)$ and suppose at least one of them has infinite mass. If there exists $\gamma \in \zpmcoup(\mu,\nu)$, 
    then the following two properties hold:
    \begin{itemize}
        \item 
        for $\nu$-almost all $y\in \spt \nu$, there exists $x \in \spt \mu \sauf \cbr{0}$ such that $\inpr{x,y} \ge 0$;
        \item 
        for all $y \in \spt \nu$, we have $\sup_{x\in\spt\mu\sauf\cbr{0}} \inpr{x/\nbr{x},y} \ge 0$.
    \end{itemize}
\end{lemma}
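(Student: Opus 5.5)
The plan is to use properness to attach to $\nu$-almost every $y$ a partner $x \neq 0$ with $(x,y)\in\spt\gamma$. Monotonicity of $\spt\gamma$, tested against points of $\spt\gamma$ accumulating at the origin of $\RdRd$, should then force $\inpr{x,y}\ge 0$. The second bullet will follow from the first by density.

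\emph{Step 1: $(0,0)\in\cl(\spt\gamma)$.} Since $\varphi_1\gamma=\mu$ and $\varphi_2\gamma=\nu$ and one of them has infinite mass, $\gamma$ has infinite total mass on $\RdRdmz$. By \cref{lm:zcoup_in_Mzp}, $\gamma$ is finite on $(\RdRd)\sauf\ball_{2d}(0,r)$ for every $r>0$. Hence $\gamma(\ball_{2d}(0,r)\sauf\{0\})=\infty>0$ for every $r>0$, so $\spt\gamma$ meets every punctured ball around $(0,0)$. This gives a sequence $(x_n,y_n)\in\spt\gamma$ with $(x_n,y_n)\to(0,0)$.

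\emph{Step 2: partners for $\nu$-almost every $y$.} Let $N$ be the set of $y\in\Rd$ such that no $(x,y)\in\spt\gamma$ has $x\neq 0$. Then $\Rdmz\times N\subset(\spt\gamma)^c$, so $\gamma(\Rdmz\times N)=0$; if $N$ is not Borel, I would argue with a Borel cover or outer measure. Properness gives $\gamma(\{0\}\times N\sauf\{0\})=0$, hence
\[
\nu(N\sauf\{0\})=\gamma(\Rdmz\times (N\sauf\{0\}))+\gamma(\{0\}\times (N\sauf\{0\}))=0 .
\]
So for $\nu$-a.e.\ $y$ there is $(x,y)\in\spt\gamma$ with $x\neq 0$. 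By \cref{lem:supp-zero-proj}, such $x$ lies in $\spt\mu$ (more precisely, $\proj_1$ maps $\spt\gamma$ into $\spt\mu$), so $x\in\spt\mu\sauf\{0\}$.

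\emph{Step 3: the sign.} For such a pair $(x,y)$, monotonicity \eqref{eq:monotone} applied to $(x,y)$ and the sequence from Step 1 gives $\inpr{x-x_n,y-y_n}\ge 0$. Letting $n\to\infty$ yields $\inpr{x,y}\ge 0$, which proves the first bullet.

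\emph{Step 4: the second bullet.} Take $y\in\spt\nu$. The set of ``good'' points from Step 2 has $\nu$-null complement, so it is dense in $\spt\nu$. If $y\neq0$, choose good points $y_k\to y$ with partners $x_k\in\spt\mu\sauf\{0\}$. Then
\[
\inpr{x_k/\nbr{x_k},y}\ \ge\ \inpr{x_k/\nbr{x_k},y_k}-\nbr{y-y_k}\ \ge\ -\nbr{y-y_k}\to 0 .
\]
If $y=0$, the supremum equals $0$ provided $\spt\mu\sauf\{0\}\neq\emptyset$. This holds because $\mu$ cannot be the zero measure. If $\mu$ were zero, properness would make $\gamma$ vanish, so $\nu$ would be zero too, contradicting infinite mass.

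The main obstacle is Step 2: making the null-set argument rigorous given the possible non-measurability of $N$, and pinning down the exact form of \cref{lem:supp-zero-proj} needed to place $x$ in $\spt\mu$.
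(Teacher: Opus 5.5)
Your proof is correct. For the first bullet it is the paper's argument. Your set $N$ is the complement of the paper's $B=\proj_2\bigl(\spt\gamma\sauf(\{0\}\times\Rd)\bigr)\sauf\{0\}$, and properness is used in exactly the same way to show that $\nu$ is concentrated on the points with a partner $x\neq0$. Then $(0,0)\in\spt\gamma$, which follows from infinite mass together with finiteness away from the origin, is tested against $(x,y)$ by monotonicity. Since $\spt\gamma$ is closed, you can use $(0,0)$ directly rather than a sequence.

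The genuine difference is in the second bullet. The paper argues by contradiction: it builds a conic neighbourhood $N_y(\epsilon)$ of $y$ on which $\inpr{x,z}<0$ for all $x\in\spt\mu\sauf\{0\}$, then uses properness a second time to find $(a,b)\in\spt\gamma$ with $a\neq0$ and $b\in N_y(\epsilon)$. You instead derive the second bullet from the first, using density of the good points in $\spt\nu\sauf\{0\}$. This is shorter and reuses Step 2. You also treat $y=0$ explicitly by showing $\spt\mu\sauf\{0\}\neq\emptyset$; the paper just says there is nothing to show, tacitly assuming this.

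Both obstacles you flag are easily settled.
\begin{itemize}
\item $N$ is Borel. Its complement is $\proj_2$ of the $\sigma$-compact set $\spt\gamma\cap(\Rdmz\times\Rd)$, which is the union of the compacts $\{(x,y)\in\spt\gamma : 1/n\le\nbr{x}\le n,\ \nbr{y}\le n\}$. This is how the paper handles its set $B$, so no outer-measure argument is needed.
\item \cref{lem:supp-zero-proj} gives the inclusion in the wrong direction, namely $\spt\mu\subset\cl(\proj_1\spt\gamma)$. For $x\neq0$, argue directly: any open $U\subset\Rdmz$ containing $x$ satisfies $\mu(U)=\gamma(U\times\Rd)>0$. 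This is also \cref{rk:zero_margin_spt_incl}.
\end{itemize}
Your parenthetical that $\proj_1$ maps all of $\spt\gamma$ into $\spt\mu$ should be restricted to $x\neq0$, which is all you actually use.
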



In the rest of this section we will make the stronger assumption
\begin{equation}
    \label{necessary_condition}
    \forall y \in \spt\nu \sauf \cbr{0}:
    \exists x \in \spt\mu: \qquad
     \inpr{x,y}>0.
\end{equation} 
We will seek additional conditions under which a zero-coupling with monotone support is proper. A first such condition is that the support of the zero-coupling is in a certain sense homogeneous. This situation will naturally arise for zero-couplings between limit measures in multivariate regular variation, as in \cref{main_thm_2} below.

\begin{proposition}
    \label{cor:cdtn_push_forward}
    Let $\mu, \nu \in \Mz(\Rd)$ and assume \cref{necessary_condition}. Then $\gamma \in \zmcoup(\mu,\nu)$ is proper as soon as its support is homogeneous in the sense that
    \begin{equation}
        \label{homogenous_support}
        \exists \alpha,\beta > 0: 
        \forall \lambda>0: \: 
        (x,y)\in \spt\gamma 
            \implies (\lambda^\alpha x, \lambda^\beta y)\in \spt\gamma
        .
    \end{equation} 
\end{proposition}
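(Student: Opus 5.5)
The plan is to argue by contradiction. Monotonicity of $\spt\gamma$, combined with homogeneity in the second coordinate, will rule out any support point of the form $(0,y)$ with $y\neq 0$. Suppose $\gamma$ is not proper, i.e.\ $\gamma(\cbr{0}\times\Rdmz)>0$. By continuity from below there is a Borel set $B\subset\Rdmz$ with $0\notin\cl B$ and $\gamma(\cbr{0}\times B)>0$. Since $\RdRd$ is second countable, the complement of the support is $\gamma$-null. Hence $\spt\gamma$ meets $\cbr{0}\times\cl B$, which gives a point $(0,y)\in\spt\gamma$ with $y\neq0$.

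\emph{Step 1: $y\in\spt\nu\sauf\cbr{0}$.} For every open neighbourhood $V$ of $y$ with $0\notin\cl V$, the set $\Rd\times V$ is an open neighbourhood of $(0,y)$. Therefore $\nu(V)=\gamma(\Rd\times V)>0$, so $y\in\spt\nu$. By \cref{necessary_condition} there is then $x\in\spt\mu$ with $\inpr{x,y}>0$; in particular $x\neq0$.

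\emph{Step 2: a support point of $\gamma$ above $x$.} Next I need a point of $\spt\gamma$ whose first coordinate lies near $x$. I would invoke the support lemma from the appendix already used in the proof of \cref{thm:expr_zcmcoup} (\cref{lem:supp-zero-proj}), which gives $\spt\mu\subset\cl(\proj_1\spt\gamma)$. Alternatively, I can argue directly: for a small ball $U$ around $x$ avoiding $0$, we have $\gamma(U\times\Rd)=\mu(U)>0$, so $\spt\gamma$ meets $\cl U\times\Rd$. Either way we obtain $(x_k,y_k)\in\spt\gamma$ with $x_k\to x$, and hence $\inpr{x_k,y}>0$ for some fixed large $k$.

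\emph{Step 3: contradiction via homogeneity.} By \cref{homogenous_support}, $(\lambda^\alpha\cdot 0,\lambda^\beta y)=(0,\lambda^\beta y)\in\spt\gamma$ for all $\lambda>0$. So $(0,ty)\in\spt\gamma$ for every $t>0$. Monotonicity \cref{eq:monotone} applied to $(x_k,y_k)$ and $(0,ty)$ gives
\[
\inpr{x_k,y_k-ty}\ge0,\qquad\text{i.e.}\qquad \inpr{x_k,y_k}\ge t\,\inpr{x_k,y}\quad\text{for all }t>0 .
\]
The left side is a fixed finite number while the right side tends to $+\infty$ as $t\to\infty$, which is a contradiction. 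Hence $\gamma(\cbr{0}\times\Rdmz)=0$.

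The only delicate point is the measure-theoretic passage from positive mass to support points, in Steps 1 and 2. I expect it to be routine given the conventions on $\spt$ for $\Mz$ measures and the appendix lemma on projections of supports. Note that only the homogeneity of the second coordinate is actually used, since the first coordinate $0$ is fixed by scaling.
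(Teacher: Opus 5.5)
Your proof is correct, and its skeleton matches the paper's. Both use a point $(0,y)\in\spt\gamma$ with $y\in\spt\nu\sauf\cbr{0}$, a point $x\in\spt\mu\subset\cl(\proj_1\spt\gamma)$ with $\inpr{x,y}>0$, and a clash between monotonicity and homogeneity.

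The genuine difference is how you exploit homogeneity. The paper isolates \cref{lm:homo_monot}, where it rescales a generic support point $(x_n,y_n)$ by $\lambda_n=\lambda^{1/\beta}\nbr{y_n}^{-1/\beta}$, so that the second coordinate has norm $\lambda$, and then lets $\lambda\to0$. This forces a case split (whether $y_n=0$ infinitely often) and a limit along the normalized vectors $y_n/\nbr{y_n}$. You instead push the vertical point out to $(0,ty)$ with $t\to\infty$. For every $(x',y')\in\spt\gamma$ this gives $\inpr{x',y'}\ge t\inpr{x',y}$, hence $\inpr{x',y}\le0$ in one line. By continuity in $x'$, this is the full conclusion of \cref{lm:homo_monot}, with no case distinction. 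Your route also makes visible that only invariance of the vertical fibre $\spt\gamma\cap(\cbr{0}\times\Rd)$ under $y\mapsto ty$ for large $t$ is needed. As you note, $\alpha$ and homogeneity off the axis play no role.

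Your bookkeeping is also a bit tighter. You extract a support point $(0,y)$ with $y\neq0$ directly from $\gamma(\cbr{0}\times\Rdmz)>0$, so $y\in\spt\nu$ comes for free. The paper instead first shows $\spt\gamma\cap(\cbr{0}\times(\spt\nu\sauf\cbr{0}))=\emptyset$, and then separately bounds $\gamma(\cbr{0}\times(\Rdmz\sauf\spt\nu))\le\nu(\Rdmz\sauf\spt\nu)=0$.

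The paper's organization buys a standalone lemma on homogeneous monotone sets. Your argument proves that same lemma more simply and under a weaker hypothesis.
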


The homogeneity assumption \eqref{homogenous_support} made on the support of $\gamma$ also implies an assumption on the supports of $\mu$ and $\nu$. Moreover, this criterion does not allow one to prove directly that any (cyclically) monotone zero-coupling is proper, because of the additional assumption on its support. From now on we will only make assumptions on the zero-marginals $\mu,\nu$ to derive the desired result.


According to the next result, a zero-coupling with monotone support is proper as soon as $\mu$ has sufficient mass in the directions that matter for $\nu$. For $b \in \Rdmz$ and $\eps > 0$, consider the cone
\begin{equation}
    \label{eq:Hpl}
    H_+(b,\eps)
    = \bigl\{x\in\Rdmz \mid \inpr{x/\nbr{x}, b} > \eps\bigr\}.
\end{equation}

\begin{proposition}
    \label{prop:cdtn-proper-zcoup}
    Let $\mu, \nu \in \Mz(\Rd)$ and assume
    \begin{equation}
        \label{eq:cond_cone}
    \forall y \in \spt\nu\sauf\cbr{0}, 
    \exists \eps >0: \qquad
    \mu\rbr{H_+(y,\eps)}=\infty
    .
    \end{equation}
    Then every $\gamma \in \zmcoup(\mu,\nu)$ is proper, i.e., $\zmcoup(\mu,\nu) = \zpmcoup(\mu,\nu)$, and therefore also $\zcmcoup(\mu,\nu) = \zpcoup(\mu,\nu)$. 
\end{proposition}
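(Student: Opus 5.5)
Suppose, for contradiction, that $\gamma \in \zmcoup(\mu,\nu)$ is not proper. Then $\gamma(\cbr{0}\times\Rdmz)>0$, and hence there is a point $(0,y_0)\in\spt\gamma$ with $y_0\neq 0$. We can say more. Because $\gamma(\cbr{0}\times(\Rdmz\sauf\ball_{0,1/n}))$ increases to $\gamma(\cbr{0}\times\Rdmz)>0$, some set $\cbr{0}\times B$ with $0\notin\cl B$ carries positive mass. Any point of the support of the restriction of $\gamma$ to that set lies in $\spt\gamma$, so such a $y_0$ exists. Moreover $y_0\in\spt\nu$, since every neighbourhood $U$ of $y_0$ satisfies $\nu(U\sauf\cbr{0})\ge\gamma(\cbr{0}\times U)>0$ once $U$ avoids $0$. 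By \cref{eq:cond_cone} we may therefore pick $\eps>0$ with $\mu(H_+(y_0,\eps))=\infty$.

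The key step uses monotonicity of $\spt\gamma$ against the point $(0,y_0)$. For every $(x,y)\in\spt\gamma$ we have $\inpr{x-0,y-y_0}\ge0$, that is, $\inpr{x,y}\ge\inpr{x,y_0}$. If $x\in H_+(y_0,\eps)$, then $\inpr{x,y_0}>\eps\nbr{x}\nbr{y_0}$. Cauchy--Schwarz then gives $\nbr{x}\nbr{y}\ge\inpr{x,y}>\eps\nbr{x}\nbr{y_0}$, so $\nbr{y}>\eps\nbr{y_0}=:r>0$. Hence
\[
  \spt\gamma\cap\rbr{H_+(y_0,\eps)\times\Rd}\subset \Rd\times\rbr{\Rd\sauf\ball_{0,r}}.
\]

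It remains to turn this inclusion into a contradiction, and this is where the care is needed. Since $\gamma$ is concentrated on $\spt\gamma$ (the complement of the support is a countable union of open null sets, since $\RdRd$ is second countable), we obtain
\[
  \infty=\mu(H_+(y_0,\eps))=\gamma\rbr{H_+(y_0,\eps)\times\Rd}
  \le \gamma\rbr{\Rd\times(\Rd\sauf\ball_{0,r})}=\nu(\Rd\sauf\ball_{0,r})<\infty,
\]
where the last bound holds because $\nu\in\Mz(\Rd)$. This contradiction shows $\gamma$ is proper, and so $\zmcoup(\mu,\nu)=\zpmcoup(\mu,\nu)$.

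The cyclically monotone statement follows immediately, because every cyclically monotone set is monotone, so $\zcmcoup(\mu,\nu)\subset\zmcoup(\mu,\nu)$ and each of its elements is proper. The only delicate points in the argument are producing $y_0\in\spt\nu\sauf\cbr{0}$ with $(0,y_0)\in\spt\gamma$, and justifying that $\gamma$ gives no mass to the complement of its support. Both are standard.
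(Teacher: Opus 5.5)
Your proof is correct and follows essentially the same route as the paper's. Both assume a point $(0,b)\in\spt\gamma$ with $b\neq 0$, note that $b\in\spt\nu$, use monotonicity against $(0,b)$ to show that $\spt\gamma$ sends $H_+(b,\eps)$ outside a ball around the origin, and then contradict $\mu(H_+(b,\eps))=\infty$ with $\nu\in\Mz(\Rd)$. Your finish is slightly cleaner: you bound $\gamma(H_+(b,\eps)\times\Rd)$ directly by $\gamma(\Rd\times(\Rd\sauf\ball_{0,r}))$, whereas the paper passes through the projected set $\proj_2\bigl(\spt\gamma\cap(H_+(b,\eps)\times\Rd)\bigr)$ and must check that it is Borel.

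One small slip: in the paper's definition of $H_+(b,\eps)$ the vector $b$ is not normalized. So $x\in H_+(y_0,\eps)$ gives $\inpr{x,y_0}>\eps\nbr{x}$, not $\eps\nbr{x}\nbr{y_0}$. This yields $\nbr{y}>\eps$, so you should take $r=\eps$; nothing else changes.
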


Note that the assumption in \cref{eq:cond_cone} is stronger than the one in \cref{necessary_condition}. Condition~\eqref{eq:cond_cone} is trivially satisfied if $\mu$ has infinite mass and is spherically symmetric.

We define the \emph{positive hull} of a set $C\subset\Rd$ as the smallest cone containing it, i.e.
\[
    \pos(C)
    = 
    \cbr{
        \lambda x \mid x\in C, \lambda \ge 0
    }
    .
\]

\begin{corollary}
    \label{cor:cdtn-proper-zcoup}
    Let $\mu, \nu \in \Mz(\Rd)$ and assume \cref{necessary_condition}. Let $\gamma \in \zmcoup(\mu,\nu)$. 
    Then $\gamma$ is proper as soon as at least one of the two following properties holds: 
    \begin{enumerate}[label=(\roman*)]
        \item $\forall x \in\spt\mu, \forall \delta>0 : \mu\rbr{\pos(\ball(x,\delta))\sauf\cbr{0}}=\infty$;
        \item $\exists \alpha > 0, \forall \lambda > 0 : \mu(\lambda \point) = \lambda^{-\alpha} \mu(\point)$.
    \end{enumerate}
\end{corollary}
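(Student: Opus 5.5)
The plan is to reduce both cases to \cref{prop:cdtn-proper-zcoup}: I will show that, under \cref{necessary_condition}, each of (i) and (ii) implies the cone condition \cref{eq:cond_cone}. Then every $\gamma\in\zmcoup(\mu,\nu)$ is proper. The common first step is this. Fix $y\in\spt\nu\sauf\cbr{0}$. By \cref{necessary_condition} there is $x\in\spt\mu$ with $\inpr{x,y}>0$, and necessarily $x\neq 0$. Set $\eps=\tfrac12\inpr{x/\nbr{x},y}>0$. By continuity of $z\mapsto\inpr{z/\nbr{z},y}$ on $\Rdmz$, there is $\delta\in(0,\nbr{x})$ such that $\inpr{z/\nbr{z},y}>\eps$ for all $z\in\ball(x,\delta)$. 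Since $H_+(y,\eps)$ is invariant under multiplication by positive scalars, this gives $\pos(\ball(x,\delta))\sauf\cbr{0}\subset H_+(y,\eps)$. It therefore suffices to show $\mu\rbr{\pos(\ball(x,\delta))\sauf\cbr{0}}=\infty$.

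\emph{Case (i).} The required divergence is exactly assumption (i) applied at the point $x\in\spt\mu$. So \cref{eq:cond_cone} holds and \cref{prop:cdtn-proper-zcoup} concludes.

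\emph{Case (ii).} I will show that (ii) implies (i); alternatively one can argue directly with the cone above.
\begin{itemize}
\item Let $C=\pos(\ball(x,\delta))\sauf\cbr{0}$. This is an open cone with $\lambda C=C$ for all $\lambda>0$.
\item Since $x\in\spt\mu$, $x\neq0$ and $C$ is an open neighbourhood of $x$, we have $\mu(C)>0$.
\item Decompose $C$ into the annular pieces $C_k=C\cap\cbr{z\mid 2^{k}\le\nbr{z}<2^{k+1}}$, $k\in\mathbb{Z}$. Then $C_k=2^{k}C_0$ and, by (ii), $\mu(C_k)=2^{-k\alpha}\mu(C_0)$.
\item $\mu(C_0)>0$: otherwise every $\mu(C_k)$ would vanish and so would $\mu(C)=\sum_k\mu(C_k)$, a contradiction.
\item Hence $\mu(C)=\mu(C_0)\sum_{k\in\mathbb{Z}}2^{-k\alpha}=\infty$, because the terms with $k\to-\infty$ diverge.
\end{itemize}
Here I read (ii) as $\mu(\lambda A)=\lambda^{-\alpha}\mu(A)$ for Borel $A$. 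This is the standard homogeneity of exponent measures, and it is the convention under which the sum diverges toward the origin.

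The step I expect to be most delicate is the continuity and cone-inclusion argument that passes from a single point $x$ with $\inpr{x,y}>0$ to a whole cone inside $H_+(y,\eps)$. The care needed is in choosing $\delta<\nbr{x}$ so that $0\notin\ball(x,\delta)$. Everything else reduces directly to \cref{prop:cdtn-proper-zcoup}.
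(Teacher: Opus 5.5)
Your proposal is correct and follows essentially the same route as the paper: both reduce to \cref{prop:cdtn-proper-zcoup} by fitting $\pos(\ball(x,\delta))\sauf\cbr{0}$ inside $H_+(y,\eps)$, and case (i) is then immediate. For (ii), the paper skips the dyadic decomposition and bounds $\mu(\pos(\ball(x,\delta))\sauf\cbr{0}) \ge \mu(\lambda\ball(x,\delta)) = \lambda^{-\alpha}\mu(\ball(x,\delta))$, letting $\lambda\to0$; your annular argument is equally valid.
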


We conclude this section by a simple sufficient criterion in the one-dimensional case.

\begin{proposition}
    \label{prop:existpropcoup1D}
    Let $\mu, \nu \in \Mz(\rset)$, and assume
    \[
    \mu(\rset_{>0}) \ge \nu(\rset_{>0})
    \quad \text{ and } \quad
    \mu(\rset_{<0}) \ge \nu(\rset_{<0}).
    \]
    If $\gamma \in \zcoup(\mu,\nu)$ has monotone support, then $\gamma$ is proper.
   Conversely, if at least one of $\mu$ or $\nu$ has infinite mass, the existence of $\gamma \in \zpmcoup(\mu,\nu)$ implies that the above inequalities are both satisfied. 
\end{proposition}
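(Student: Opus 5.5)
We first prove the direct implication: under the two mass inequalities, a zero-coupling $\gamma$ with monotone support is proper. Arguing by contradiction, suppose $\gamma(\{0\}\times\Rmz)>0$. Then, say, $\gamma(\{0\}\times(0,\infty))>0$; the case of $(-\infty,0)$ is symmetric. In that case there is a point $(0,y_0)\in\spt\gamma$ with $y_0>0$.

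Monotonicity of $\spt\gamma$ gives $(x-0)(y-y_0)\ge0$ for all $(x,y)\in\spt\gamma$. Hence every point of $\spt\gamma$ with $x<0$ has $y\le y_0$, and every point with $y>y_0$ has $x\ge0$. Using monotonicity with other points $(0,y')$, $y'>0$, in the support, the same holds with $y_0$ replaced by $y_0^*=\inf\{y>0:(0,y)\in\spt\gamma\}$. This yields the key inclusion: the mass of $\nu$ on $(0,\infty)$ comes only from points of $\spt\gamma$ with $x\ge 0$. So
\[
\nu(\R_{>0})=\gamma(\R_{>0}\times\R_{>0})+\gamma(\{0\}\times\R_{>0})+\gamma(\R_{<0}\times\R_{>0}).
\]
In this decomposition the last term only involves $y\in(0,y_0]$, with $x<0$.

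The main obstacle is turning this into a contradiction when the masses are infinite. In the finite case it is direct. By monotonicity, if $(x,y)\in\spt\gamma$ with $x<0$ and $y>0$, then every point $(x',y')$ with $x'>0$ has $y'\ge y>0$. The argument then runs as follows. Either no point of the support has $x<0<y$, in which case $\mu(\R_{>0})\ge\gamma(\R_{>0}\times\R_{>0})=\nu(\R_{>0})-\gamma(\{0\}\times\R_{>0})$. Then, writing $\mu(\R_{>0})=\gamma(\R_{>0}\times\R)$, the mass of $\mu(\R_{>0})$ sent to $\{y\le0\}$ must be counted. Here we use that points $(x,y)$ with $x>0$ and $y<0$ are incompatible with $(0,y_0)$, $y_0>0$, since $x(y-y_0)<0$. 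Therefore $\mu(\R_{>0})=\gamma(\R_{>0}\times\{y\ge0\})$.

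We then count mass on the negative side in the same way. We get $\nu(\R_{<0})=\gamma(\R_{<0}\times\R_{<0})+\gamma(\{0\}\times\R_{<0})$, and $\mu(\R_{<0})=\gamma(\R_{<0}\times\R_{<0})+\gamma(\R_{<0}\times\{y\ge0\})$. Adding the two sides, the inequalities $\mu(\R_{\pm})\ge\nu(\R_{\pm})$ force the mass in $\{0\}\times\R$ to be absorbed by $\R\times\{0\}$ or by crossing terms. When everything is finite this gives $\gamma(\{0\}\times\R_{>0})\le 0$, a contradiction.

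When masses are infinite, we truncate at levels $|x|,|y|>\eps$ and use the ordering structure of a monotone set in one dimension. The support is contained in the graph of a maximal monotone (non-decreasing) relation. For $t>0$, compare $\nu((t,\infty))$, which is finite, with $\mu((s(t),\infty))$, where $s(t)$ is the generalized inverse. By monotonicity, $\nu((y_0,\infty))=\mu((0,\infty)\text{ part mapped above }y_0)$, which is finite. Since $\mu(\R_{>0})\ge\nu(\R_{>0})$, a careful comparison of the tail functions on $(0,\infty)$ shows that no mass can be supplied from $\{0\}$. If this fails, a positive amount of $\nu$ near $(0,y_0]$ would be unmatched while infinite $\mu$-mass near $0^+$ would have to go to $y\le0$, which contradicts $(0,y_0)\in\spt\gamma$. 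We will spell this comparison out with distribution functions; this is the delicate step.

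For the converse, assume $\gamma\in\zpmcoup(\mu,\nu)$ and, say, $\mu(\R_{>0})<\nu(\R_{>0})$. In particular $\mu(\R_{>0})<\infty$. Since $\gamma$ is proper, $\nu(\R_{>0})=\gamma(\R_{>0}\times\R_{>0})+\gamma(\R_{<0}\times\R_{>0})$, so $\gamma(\R_{<0}\times\R_{>0})>0$. Take a point $(x_1,y_1)$ of the support with $x_1<0<y_1$. Monotonicity then forces every point with $x>0$ to have $y\ge y_1$, and every point with $y<0$ to have $x\le x_1$. Consequently $\nu((-\infty,0))\le\mu((-\infty,x_1])<\infty$, and $\mu((0,\infty))\le\nu([y_1,\infty))<\infty$. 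Moreover, $\mu((x_1,0))$ is sent into $[0,y_1]$, so $\mu((x_1,0))\le\nu((0,y_1])+\gamma((x_1,0)\times\{0\})$. This is bounded by $\nu((0,y_1])$ plus the mass mapped to $0$. That remainder is finite because $(x_1,0)$ is bounded away from $0$ only after we also use the point $(x_1,y_1)$, which handles infinitely small $x$ via symmetry. Combining these bounds, both measures are finite, contradicting the hypothesis that one of them has infinite mass. The symmetric argument handles $\R_{<0}$.
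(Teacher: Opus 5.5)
\paragraph{The first implication has a genuine gap.} Your finite-mass argument does not close. Only one branch of your case split is treated, and the inequality $\mu(\R_{>0})\ge\gamma(\R_{>0}\times\R_{>0})$ points the wrong way for a contradiction. For infinite masses you explicitly defer the `delicate' tail comparison, so that case is not proved. The `key inclusion' is also false as stated: monotonicity against $(0,y_0)$ still allows support points with $x<0<y\le y_0$.

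The missing idea is that no finite/infinite case split is needed, because finiteness is automatic. Monotonicity against $(0,y_0)\in\spt\gamma$ gives $y\ge y_0$ for every $(x,y)\in\spt\gamma$ with $x>0$. Hence
\[
\mu(\R_{>0})=\gamma\rbr{\R_{>0}\times[y_0,\infty)}\le\nu\rbr{[y_0,\infty)}<\infty,
\]
since $[y_0,\infty)$ is bounded away from the origin. The infinite $\mu$-mass near $0^+$ that you worry about therefore cannot exist. Consequently
\[
\nu(\R_{>0})\ge\gamma(\R_{>0}\times\R_{>0})+\gamma(\{0\}\times\R_{>0})=\mu(\R_{>0})+\gamma(\{0\}\times\R_{>0}).
\]
Together with $\mu(\R_{>0})\ge\nu(\R_{>0})$ and $\mu(\R_{>0})<\infty$, this forces $\gamma(\{0\}\times\R_{>0})=0$. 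This is exactly the paper's argument, phrased there as the squeeze $\nu(\R_{>0})\ge\nu([b,\infty))\ge\gamma(\R_{>0}\times[b,\infty))=\mu(\R_{>0})\ge\nu(\R_{>0})$ with all terms finite. Only the positive-side inequality matters for $\gamma(\{0\}\times\R_{>0})$, so your bookkeeping on the negative half-line is superfluous.

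\paragraph{The converse has a wrong step.} Your strategy is viable: a support point $(x_1,y_1)$ with $x_1<0<y_1$ forces both measures to be finite. This is essentially the contrapositive of the paper's observation that infinite mass puts $(0,0)$ in $\spt\gamma$. However, the step on $(x_1,0)$ is wrong. Monotonicity against $(x_1,y_1)$ sends every $x>x_1$ to $y\ge y_1$, not into $[0,y_1]$. Also, no `symmetry' makes the interval $(x_1,0)$ bounded away from $0$.

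With the correct direction you get directly $\mu\rbr{(x_1,\infty)\sauf\{0\}}\le\nu([y_1,\infty))<\infty$. Since also $\mu((-\infty,x_1])<\infty$, this gives $\mu(\Rmz)<\infty$. Properness then yields $\nu(\Rmz)=\gamma(\Rmz\times\Rmz)\le\mu(\Rmz)<\infty$, the desired contradiction.

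The paper's route is shorter. Infinite mass gives $(0,0)\in\spt\gamma$, and monotonicity then gives $\spt\gamma\subset[0,\infty)^2\cup(-\infty,0]^2$. With properness, $\mu(\R_{>0})=\gamma(\R_{\ge0}\times\R_{\ge0})\ge\nu(\R_{>0})$, and likewise on the negative side.
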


It is unclear if \cref{prop:existpropcoup1D} admits an extension to the $d$-dimensional case in terms of masses assigned to half-spaces of the form $\cbr{x \in \Rd \mid \inpr{x,b} > 0}$.


\subsection*{Proofs}

\begin{proof}[Proof of \cref{prop:existpropcoupfinite}]
    If $\mu(\Rdmz) = 0$, both $\mu$ and $\nu$ are the null measure and there is nothing to prove, so assume $0 < \mu(\Rdmz) < \infty$.
    Consider the extensions $\tilde\mu\in[\mu], \bar\nu\in[\nu]$ of $\mu,\nu$ to $\Rd$ given by $\tilde\mu(\cbr{0})=0$, $\bar\nu(\cbr{0}) = \mu(\Rdmz)-\nu(\Rdmz)$, respectively. Since $\nu(\Rdmz) \le \mu(\Rdmz)$ and since both are finite, $\bar\nu(\cbr{0})$ is nonnegative and finite, and $\tilde\mu(\Rd) = \bar\nu(\Rd)$. By assumption, $\tilde\mu$ vanishes on sets of Hausdorff dimension at most $d-1$, allowing one to apply \citet[Proof of existence, page 317]{McCann1995} to the probability measures $m^{-1}\tilde\mu$ and $m^{-1}\bar\nu$, where $m = \tilde\mu(\Rd) = \bar\nu(\Rd)$. This yields the existence of a closed convex function $\psi:\Rd\to\rset\cup\cbr{\infty}$ 
    satisfying $\bar\nu = (\nabla \psi)_{\#}\tilde{\mu} = (\nabla \psi)_{\#}\mu$. Apply the operator $\res$ (restricting to $\Rdmz$) on both sides of the equation to find that $\nu = \res \rbr{(\nabla \psi)_{\#}\mu}$. 

    The measure $\mu$ puts no mass on the complement of $\dom \nabla \psi$ since
    \begin{align*}
        \mu(\Rdmz) 
        = \tilde\mu(\Rd) 
        &= \bar\nu(\Rd) \\
        &= (\nabla \psi)_{\#}\mu(\Rd) 
        = \mu\rbr{\Rdmz \cap (\nabla\psi)^{-1}(\Rd)}
        = \mu\rbr{\Rdmz \cap \dom \nabla\psi},
    \end{align*}
    and both sides of the equation are finite. 
    
    Finally, we show that the measure $\gamma = \res \brbr{\rbr{\Id \times \nabla \psi}_{\#}\mu}$ is a proper cyclically monotone zero-coupling of $\mu$ and $\nu$. First, $\gamma$ is proper since, by definition,
    \begin{align*}
        \gamma\rbr{\cbr{0} \times \Rdmz}
        &= \rbr{\Id \times \nabla \psi}_{\#}\mu \rbr{\cbr{0} \times \Rdmz} \\
        &= \mu\rbr{\Rdmz \cap \rbr{\cbr{0} \cap (\nabla\psi)^{-1}(\Rdmz)}}
        = \mu(\emptyset)
        = 0.
    \end{align*}
    Second, for $A \in \borel(\Rdmz)$ and $B \in \borel(\Rd)$, we have
    \[
        \gamma(A \times B) 
        = (\Id \times \nabla\psi)_{\#}\mu(A \times B)
        = \mu \rbr{A \cap (\nabla\psi)^{-1}(B)}.
    \]
    On the one hand, putting $B = \Rd$, we find 
    \[ 
        \gamma(A \times \Rd) 
        = \mu \rbr{A \cap \dom \nabla\psi} 
        = \mu(A), 
    \] 
    so $\varphi_1 \gamma = \mu$. On the other hand, putting $A = \Rdmz$, we find, if $B \subset \Rdmz$,
    \[
        \gamma(\Rd \times B)
        = \gamma(\Rdmz \times B)
        = \mu\rbr{\Rdmz \cap (\nabla\psi)^{-1}(B)}
        = \nu(B).
    \]
    and thus $\varphi_2 \gamma = \nu$.
\end{proof}

\begin{proof}[Proof of \cref{lem:necessary-proper}]
    \emph{Proof of (i)}. ---
    Define $B$ by removing the ``vertical axis'' $\cbr{0} \times \Rd$ from $\spt \gamma$ and projecting the difference set onto the second coordinate and removing the origin, i.e.,
    \[ 
        B = \proj_2\rbr{\spt \gamma \sauf (\cbr{0} \times \Rd)} \sauf \cbr{0},
    \]
    where $\proj_2(x,y) = y$ for $(x,y) \in \RdRd$.
    Clearly, $B \subset \Rdmz$. Further, $B$ is a Borel set because $\proj_2$ is continuous and $\spt \gamma \sauf (\cbr{0} \times \Rd)$ is a countable union of compact sets, 
    specifically $\{(x,y) \in \spt\gamma \mid 1/n \le \nbr{x} \le n, \nbr{y} \le n\}$ for $n \in \nat$, since $\spt \gamma$ is closed.

    The measure $\nu$ is concentrated on $B$ because (recall $\gamma$ is a measure on $\reals^{2d}_{\smallsetminus0}$)
    \begin{align*}
        \nu(\Rdmz \sauf B)
        &= \nu\rbr{\Rdmz \sauf \proj_2\rbr{\spt \gamma \sauf (\cbr{0} \times \Rd)}} \\
        &= \gamma \rbr{\Rd \times \sbr{\Rdmz \sauf \proj_2\rbr{\spt \gamma \sauf (\cbr{0} \times \Rd)}}} \\
        &\le \gamma \rbr{\reals^{2d}_{\smallsetminus0} \sauf \rbr{\spt \gamma \sauf (\cbr{0} \times \Rd)}} \\
        &\le \gamma\rbr{\reals^{2d}_{\smallsetminus0} \sauf \spt \gamma}
        + \gamma\rbr{\cbr{0} \times \Rdmz}
        = 0,
    \end{align*}
    where, in the last step, we used the property that $\gamma$ is proper.

    Let $y \in B$. By definition of $B$, there exists $x \in \Rd$ such that $(x, y) \in \spt \gamma \sauf (\cbr{0} \times \Rd)$. This implies both $x \ne 0$ and $x \in \spt \mu$: for any open set $U \subset \Rdmz$ that contains $x$, we have $\mu(U) = \gamma(U \times \Rd) > 0$, as $U \times \Rd$ is an open set that contains $(x,y) \in \spt \gamma$.

    Since at least one of the two measures $\mu$ and $\nu$ has infinite mass, so has $\gamma$. As $\gamma \in \Mz(\RdRd)$, necessarily $\gamma(\ball_{2d}(0,\eps) \sauf (0,0)) = \infty > 0$ for all $\eps > 0$. Therefore, $(0, 0)$ belongs to $\spt \gamma$ too.

    Both $(x,y)$ and $(0,0)$ thus belong to $\spt \gamma$. The monotonicity of the latter set implies $\inpr{x,y} = \inpr{x-0,y-0} \ge 0$, as required.

    \smallskip

    \noindent \emph{Proof of (ii).} ---
    For $y = 0$, there is nothing to show. Assume there exists $y\in\spt\nu\sauf\cbr{0}$ such that 
    \[ 
        \sup_{x\in\spt\mu\sauf\cbr{0}} \inpr{x/\nbr{x},y}<0. 
    \] 
    We will prove that this leads to a contradiction.

    Let $N_y(\epsilon)=\cbr{z\in\Rdmz \mid \nbr{ z/\nbr{z} - y/\nbr{y}} < \epsilon}$ for $\epsilon > 0$. For $x\in\spt\mu\sauf\cbr{0}$ and $z \in N_y(\epsilon)$, the Cauchy--Schwarz inequality yields
    \begin{align*}
        \inpr{x/\nbr{x}, z/\nbr{z}} 
        &= \inpr{x/\nbr{x}, z/\nbr{z}-y/\nbr{y}} + \inpr{x/\nbr{x}, y/\nbr{y}}\\
        &\le \epsilon + \sup_{x\in\spt\mu\sauf\cbr{0}}
     \inpr{x/\nbr{x},y/\nbr{y}}
        .
    \end{align*}
    Taking $\epsilon < \abr{ \sup \cbr{ \inpr{x/\nbr{x},y/\nbr{y}} \mid x\in\spt\mu\sauf\cbr{0}} }$ small enough, we get $\inpr{x,z} < 0$ for all $x\in\spt\mu\sauf\cbr{0}$ and $z \in N_y(\epsilon)$, 
     where $N_y(\epsilon)$ is an open neighbourhood of $y$.

    Since $y\in\spt\nu\sauf\cbr{0}$ and since $\gamma$ is a proper zero-coupling we have
    \[
        0<\nu(N_y(\epsilon))
        =
        \gamma\rbr{\Rd\times N_y(\epsilon)}
        = \gamma\rbr{\spt\gamma \cap (\Rdmz\times N_y(\epsilon))}.
    \]
    The set $\spt\gamma \cap (\Rdmz\times N_y(\epsilon))$ has positive measure and is therefore not empty. Let $(a, b)$ be a point in this set; then $a \ne 0$ and $b \in N_{y}(\epsilon)$. Necessarily $a \in \spt \mu$: for any open neighbourhood $U \subset \Rdmz$ of $a$, we have $\mu(U) = \gamma(U \times \Rd) > 0$, since $U \times \Rd$ is an open neighbourhood of $(a, b) \in \spt \gamma$. By the previous paragraph, we find $\inpr{a,b} < 0$.
    
    However, since at least one of the two measures $\mu$ and $\nu$ has infinite mass, we have $(0, 0) \in \spt \gamma$ by the argument in the beginning of the proof of \cref{lem:Psi-finite}.
    But $\spt \gamma$ is monotone, and thus $\inpr{a,b} = \inpr{a-0,b-0} \ge 0$, in contradiction to the conclusion of the preceding paragraph. 
    
    We conclude that the assumption at the beginning of the proof of (ii) is wrong, which leads us to the desired conclusion.
\end{proof}

\begin{lemma}\label{lm:homo_monot}
    Let $A \subset \RdRd$ satisfy the following two properties:
    \begin{enumerate}[label=(\alph*)]
    \item (monotonicity) 
        $\forall (x,y),(a,b)\in A: \inpr{x-a,y-b} \ge 0$;
    \item (homogeneity) 
        $\exists \alpha,\beta > 0, 
        \forall \lambda>0,
        \forall (x,y)\in A: 
            (\lambda^\alpha x, \lambda^\beta y)\in A$.
    \end{enumerate}
    Then $x \in \cl(\proj_1 A)$ and $(0, b) \in A$ imply $\inpr{x,b} \le 0$.
\end{lemma}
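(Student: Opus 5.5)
First treat the case $x \in \proj_1 A$ and then extend to the closure by continuity, since the map $x \mapsto \inpr{x,b}$ is continuous. If $\inpr{x,b}\le 0$ holds for every $x\in\proj_1 A$, it persists under limits, so it holds on $\cl(\proj_1 A)$.

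Fix $(x,y)\in A$ and $(0,b)\in A$. By homogeneity (b), for every $\lambda>0$ the point $(\lambda^\alpha x,\lambda^\beta y)$ lies in $A$. Apply monotonicity (a) to this point and to $(0,b)$:
\[
    0 \le \inpr{\lambda^\alpha x - 0, \lambda^\beta y - b}
    = \lambda^{\alpha+\beta}\inpr{x,y} - \lambda^{\alpha}\inpr{x,b}.
\]
Divide by $\lambda^\alpha>0$ to get $\inpr{x,b} \le \lambda^{\beta}\inpr{x,y}$ for all $\lambda>0$. Since $\beta>0$, letting $\lambda\to 0$ gives $\inpr{x,b}\le 0$.

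There is no real obstacle. The only points to check are the sign bookkeeping when expanding the inner product and that it is $\beta>0$ which makes $\lambda^\beta\to0$ as $\lambda\to 0$. This holds whatever the sign of $\inpr{x,y}$, which is finite because $y$ is a fixed vector. It is also worth noting that the homogeneity is applied only to $(x,y)$ and not to $(0,b)$, so no assumption about scaling $b$ is needed.
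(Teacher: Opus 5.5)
Your proof is correct. It uses the same key inequality as the paper: monotonicity of $A$, tested against $(0,b)$ and a homogeneously rescaled point of $A$. The difference is that you take the two limits in the opposite order.

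The paper starts from a sequence $(x_n,y_n)\in A$ with $x_n\to x$ and fixes $\lambda>0$. It rescales by the $n$-dependent factor $\lambda_n=\lambda^{1/\beta}\nbr{y_n}^{-1/\beta}$, chosen so that $\lambda_n^\beta y_n$ has norm $\lambda$. It then passes to the limit along an accumulation point $z$ of $y_n/\nbr{y_n}$ to get $\lambda\inpr{x,z}\ge\inpr{x,b}$, and only afterwards lets $\lambda\to0$. This order forces a separate case when $y_n=0$ for infinitely many $n$, plus a compactness argument on the unit sphere. Both are needed only because the $y_n$ may be unbounded or vanish once $n$ varies before $\lambda\to0$.

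You instead let $\lambda\to0$ for a fixed $(x,y)\in A$, which gives $\inpr{x,b}\le0$ on all of $\proj_1 A$ directly. The passage to $\cl(\proj_1 A)$ is then just the closedness of the half-space $\cbr{x \mid \inpr{x,b}\le 0}$. Your route is shorter and needs neither the normalization nor the case distinction. The paper's ordering gains nothing in generality here.
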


\begin{proof}
    Let $x \in \cl(\proj_1 A)$ and $(0, b) \in A$. We need to show $\inpr{x,b} \le 0$. Let $(x_n,y_n) \in A$ for $n \ge 1$ be such that $x_n \to x$ as $n\to\infty$. If $y_n = 0$ for all $n$ in an infinite subset $N$ of $\nat$, then $\inpr{x_n,-b} = \inpr{x_n-0,0-b} \ge 0$ for $n \in N$ by monotonicity and thus $\inpr{x,b} = -\lim_{n\to\infty,n\in N} \inpr{x_n,-b} \le 0$. Otherwise, let $\lambda > 0$ and put $\lambda_n = \lambda^{1/\beta} \nbr{y_n}^{-1/\beta}$ and $z_n = y_n/\nbr{y_n}$. By homogeneity, $(\lambda_n^\alpha x_n,\lambda_n^\beta y_n)\in A$ and thus, by monotonicity,
    \begin{align*}
        \inpr{\lambda_n^\alpha x_n - 0,\lambda_n^\beta y_n - b} 
        &\ge0,
        \\
        \text{from which} \qquad
        \lambda \inpr{x_n, z_n} = \lambda_n^\beta \inpr{x_n,y_n} &\ge \inpr{ x_n,b}.
    \end{align*}
    If $z$ is an accumulation point of $(z_n)_n$, we get $\lambda \inpr{x, z} \ge \inpr{x, b}$. Since this is true for any $\lambda > 0$, we can take the limit as $\lambda \to 0$ to find $0 \ge \inpr{x, b}$, as required. 
\end{proof}

\begin{proof}[Proof of \cref{cor:cdtn_push_forward}]
    We apply \cref{lm:homo_monot} to $A=\spt\gamma$, a set which is both cyclically monotone and thus monotone as well as homogeneous in the sense of \cref{homogenous_support}. Writing $H_+(x) = \cbr{y \in \Rd \mid \inpr{x,y} > 0}$, we get 
    \[
        \forall x \in \cl(\proj_1 \spt\gamma): \qquad 
        \spt\gamma \cap\rbr{ \{0\} \times H_+(x) } = \emptyset.
    \]
    Since $\spt\mu\subset\cl(\proj_1\spt\gamma)$ by \cref{lem:supp-zero-proj} and thanks to the assumption on the supports of $\mu$ and $\nu$, it follows that 
    \[ 
        \spt\gamma \cap \rbr{\cbr{0}\times\rbr{\spt\nu \sauf \cbr{0}}}
        \subset \spt\gamma \cap \rbr{\cbr{0} \times \bigcup_{x \in \spt\mu} H_+(x)}
        = \emptyset.
    \] 
    As a consequence, $\gamma(\{0\}\times(\spt\nu\sauf\{0\}))=0$. Moreover, since $\gamma$ is a zero-coupling of $\mu$ and $\nu$,
    \begin{align*}
        \gamma\rbr{\cbr{0} \times \rbr{\Rdmz \sauf \spt\nu}}
        &\le \gamma\rbr{\Rd \times \rbr{\Rdmz \sauf \spt\nu}} \\
        &= \nu \rbr{\Rdmz \sauf \spt\nu}
        = 0.
    \end{align*}
    We conclude that $\gamma(\cbr{0} \times \Rdmz) = 0$, i.e., $\gamma$ is a proper zero-coupling. 
\end{proof}


\begin{proof}[Proof of \cref{prop:cdtn-proper-zcoup}]
The goal is to show that under the conditions of the statement, $\gamma(\cbr{0}\times\Rdmz)=0$. A sufficient condition is that $\spt\gamma \cap \rbr{\cbr{0}\times\Rdmz} = \varnothing$.  We proceed by contradiction and assume the existence of $b\in\Rdmz$ such that $(0,b)\in\spt\gamma$. Then also $b\in\spt\nu$, since for any $0<r<\nbr{b}$, we have, by definition of $\spt\gamma$,
\[
    \nu(\ball(b,r))
    = \gamma(\Rd\times\ball(b,r))
    > 0,
\]
since $\Rd\times\ball(b,r)$ is an open set containing $(0, b)$.

From the assumption in the statement, there exists $\eps>0$ such that $\mu(H_+(b,\eps)) = \infty$, with $H_+(b,\eps)$ defined in \cref{eq:Hpl}. For the latter $\eps$, consider the set  
\begin{align*}
    A 
    &= \cbr{ y \in \Rd \mid \exists x_y \in H_+(b,\epsilon): (x_y,y) \in \spt\gamma } \\
    &= \proj_2 \brbr{ \spt\gamma \cap \rbr{H_+(b,\epsilon) \times \Rd}}.
\end{align*}
(Note that if $\spt\gamma$ corresponds to the graph of a mapping $\Rd \to \Rd$, then $A$ is the image of $H_+(b,\eps)$ by the latter mapping.) 
The set $A$ is a Borel set because the projection $\proj_2$ is continuous and $\spt\gamma \cap \rbr{H_+(b,\epsilon) \times \Rd}$ is equal to the countable union of compact sets
\[
    \cbr{
        (x,y) \in \spt \gamma 
        \mid 
        1/n \le \nbr{x} \le n, \,
        \nbr{y} \le n, 
        \inpr{x/\nbr{x}, b} \ge \eps + 1/n
    },
    \qquad n \in \nat.
\]

\smallskip

\noindent\emph{Step 1.} --- We first show that $A \subset \ball(0,\eps)^c$. Let $y \in A$ and let $x_y\in H_+(b,\epsilon)$ be such that $(x_y,y)\in\spt\gamma$. Then by the Cauchy--Schwarz inequality, we have 
\[
    \nbr{y} \ge \inpr{  x_y/\nbr{x_y}, y }.
\]
Now because $(0,b)$ and $(x_y,y)$ both belong to the monotone set $\spt \gamma$, we have  
$\inpr{x_y, y-b} = \inpr{x_y-0,y-b} \ge 0$,  
whence $\inpr{x_y,y} \ge \inpr{x_y,b}$ and thus, from the definition of $H_+(b,\eps)$,  
\[
    \inpr{ x_y /\nbr{x_y}, y } \ge 
    \inpr{ x_y /\nbr{x_y}, b } \ge \eps. 
\]
The latter two displays prove that $\nbr{y} \ge \eps$,  as claimed. 

\smallskip{}

\noindent\emph{Step 2.} ---
Next we show that $\nu(A)= +\infty$, from which the contradiction will follow, since by definition of $\Mzp(\Rd)$ and by Step~1, we should have $\nu(A)\le \nu(\ball(0,\eps)^c) <\infty$. From the definition of $A$, and since $0\notin A$ by Step~1, we have
\begin{align*}
    \nu(A)
    &= \gamma(\Rd \times A) \\
    &\ge \gamma\rbr{\{(x,y) \in \spt\gamma \mid x \in H_+(b,\epsilon), y \in A \}} \\
    &= \gamma\rbr{\{(x,y) \in \spt\gamma \mid x \in H_+(b,\epsilon) \}} \\
    &= \gamma\rbr{ H_+(b,\epsilon) \times \Rd } 
    = \mu\rbr{H_+(b,\epsilon)} = +\infty,
\end{align*}
leading to the stated contradiction.
\end{proof}

\begin{proof}[Proof of \cref{cor:cdtn-proper-zcoup}]
    It suffices to check that the condition in \cref{prop:cdtn-proper-zcoup} is satisfied.
    Let $y \in \spt\nu\sauf\cbr{0}$. By assumption there exists $x \in \spt\mu$ such that $\inpr{x,y}>0$. We can find $\epsilon > 0$ such that $x\in H_+(y,\epsilon)$ and then, by continuity, also $\ball(x,\delta)\subset H_+(y,\epsilon)$ for some $\delta>0$.
    As $x \in \spt\mu$, we have $\mu( \ball(x,\delta) ) > 0$.
    
    \begin{enumerate}[label=(\roman*)]
        \item By assumption, $\mu\rbr{\pos\ball(x,\delta)}=\infty$; note that $0 \not\in \ball(x,\delta)$ by the choice of $\delta$. Since $H_+(y,\epsilon)$ is a cone and $\ball(x,\delta)\subset H_+(y,\epsilon)$, the inclusion $\pos\ball(x,\delta) \subset H_+(y,\epsilon)$ holds too. Therefore, $\mu(H_+(y,\epsilon))=\infty$ and \cref{prop:cdtn-proper-zcoup} can be applied.
        \item This is a particular case of the last point. Let $x \in \spt \mu$ and $\delta> 0$. For any $\lambda>0$, we have $\lambda \ball(x,\delta)\subset\pos\ball(x,\delta)$ and thus $\mu\rbr{\pos\ball(x,\delta)} \ge \mu(\lambda \ball(x, \delta)) = \lambda^{-\alpha}\mu(\ball(x,\delta))$. Since $\mu(\ball(x,\delta))>0$, we find assumption~(i) by taking the limit as $\lambda\to0$.
        \qedhere
    \end{enumerate}
\end{proof}

\begin{proof}[Proof of \cref{prop:existpropcoup1D}]
If $\spt \gamma \cap (\cbr{0} \times \Rmz) = \emptyset$, then $\gamma\rbr{\cbr{0} \times \Rmz} = 0$ and thus $\gamma$ is proper. So assume $\spt \gamma$ intersects $\cbr{0} \times \Rmz$; we will show that still $\gamma\rbr{\cbr{0} \times \rset_{>0}} = \gamma\rbr{\cbr{0} \times \rset_{<0}} = 0$.

    Assume there exists $b>0$ such that $(0,b)\in\spt\gamma$; if such $b$ does not exist, then $\spt \gamma$ does not interset $\cbr{0} \times \R_{>0}$ and thus $\gamma(\cbr{0} \times \rset_{>0}) = 0$, and we can move to the next paragraph. Consider $(x,y)\in\spt\gamma$ with $x>0$;
    by monotonicity of $\spt\gamma$, we have $(x-0)(y-b) \ge 0$, i.e., $y \ge b$. It follows that
    \[
        \spt \gamma \cap \rbr{\rset_{>0} \times \rset}
        \subset
        \rset_{>0} \times [b, \infty).
    \]
    But then $\gamma(\rset_{>0} \times \rset) = \gamma(\rset_{>0} \times [b, \infty))$ and thus
    \begin{align*}
        \nu(\rset_{>0})
        \ge 
        \nu\rbr{ [b,\infty)} 
        &=\gamma\rbr{\rset\times[b,\infty)}
        \\
        &\ge \gamma\rbr{\rset_{>0}\times[b,\infty)} 
        \\
        & =\gamma\rbr{\rset_{>0}\times\rset} 
        = \mu(\rset_{>0}) \ge \nu(\rset_{>0})
        .
    \end{align*}
    As a consequence, the inequalities above become equalities, the common quantity being finite, and thus 
    \begin{align*} 
        \gamma\rbr{\cbr{0}\times[b,\infty)} 
        &\le \gamma\rbr{\rset_{\le 0} \times [b,\infty)} \le 
        \gamma\rbr{\rset \times [b,\infty)} - \gamma\rbr{\rset_{>0} \times [b,\infty)}
        = 0 \qquad \text{and} \\
        \gamma\rbr{\cbr{0}\times(0,b)}
        &\le \gamma\rbr{\reals \times (0,b)}
        = \nu((0,b))
        = \nu(\rset_{>0})-\nu([b,\infty))
        = 0,
    \end{align*}
    yielding $\gamma\rbr{\cbr{0}\times\rset_{>0}}=0$. 

    Similarly, if there does not exist $b < 0$ such that $(0, b) \in \spt \gamma$, then trivially $\gamma(\cbr{0} \times \rset_{<0}) = 0$, while if such a negative number $b$ exists, we still have $\gamma(\cbr{0} \times \rset_{<0}) = 0$ by the same argument as in the previous paragraph.

    Reciprocally, assume there exists $\gamma\in\zpmcoup(\mu,\nu)$ and assume $\mu\rbr{\rset\sauf\cbr{0}}=\infty$ or $\nu\rbr{\rset\sauf\cbr{0}}=\infty$. Then also $\gamma(\rset^2 \sauf \cbr{(0,0)}) = \infty$ and thus $\gamma([-r,r]^2) = \infty > 0$ for every $r > 0$, which implies $(0,0)\in\spt\gamma$. Moreover, as $\spt\gamma$ is monotone, we find that any $(\tilde x,\tilde y)\in\spt\gamma$ satisfies $\tilde x \tilde y = (\tilde x - 0)(\tilde y - 0) \ge 0$, i.e., $\spt \gamma \subset [0, \infty)^2 \cup (-\infty, 0]^2$; intuitively, $\mu$-mass from $\rset_{>0}$ (resp. $\rset_{<0}$) cannot be sent to $\rset_{<0}$ (resp. $\rset_{>0}$) by the zero-coupling $\gamma$. As a consequence, 
    \begin{align*}
        \mu(\rset_{>0}) 
        &= \gamma(\rset_{>0} \times \rset) \\
        &= \gamma\rbr{ \rset_{>0} \times \rset_{\ge0} } \\
        &= \gamma\rbr{ \rset_{\ge0} \times \rset_{\ge0} } \\
        &\ge \gamma\rbr{ \rset_{\ge0} \times \rset_{>0} } = \nu(\rset_{>0}),
    \end{align*}
    where we used that $\gamma$ is proper, i.e., $\gamma(\cbr{0}\times\rset)=0$. Bu the same reasoning, we can also prove $\mu(\rset_{<0}) \ge \nu(\rset_{<0})$.
\end{proof}

\section{Brenier--McCann theorem for zero-couplings}
\label{sec:brenier}


In \citet{McCann1995} it is proven that, for a probability measure $P$ on $\Rd$ vanishing on sets of Hausdorff dimension at most $d-1$ and for closed convex functions $\phi$ and $\psi$, the identity $\rbr{\Id \times \nabla\psi}_{\#}P = \rbr{\Id \times \nabla\phi}_{\#}P$ implies $\nabla\phi=\nabla\psi$ $P$-a.e.
In our setting, we prove that the same conclusion holds under appropriate assumptions when an improper term is added as in the representation from \cref{thm:expr_zcmcoup}.
The proof is inspired by the one of the uniqueness assertion of Main Theorem in \citet{McCann1995}, which relies in turn on a refinement of arguments from \citet{aleksandrov1942}. However, the fact that we are only dealing with zero-couplings and that some quantities may be infinite makes the proof substantially more complicated.
Recall that we write $\res m$ for the restriction of a Borel measure $m$ on $\reals^k$ to $\reals^k \sauf \cbr{0}$.

\begin{theorem}[Uniqueness of zero-couplings]
    \label{thm:unique}
    Let $\mu,\nu\in\Mzp(\Rd)$ and assume $\mu$ vanishes on sets of Hausdorff dimension at most $d-1$. Let $\gamma_1,\gamma_2 \in \zcmcoup(\mu,\nu)$  and let $\psi_1,\psi_2 : \Rd \to \reals \cup \cbr{\infty}$ be closed convex functions such that $\spt \gamma_i \subset \partial \psi_i$ for $i \in \cbr{1,2}$. If $(0, 0) \in \partial \psi_1 \cap \partial \psi_2$, then $\nabla \psi_1 = \nabla \psi_2$ $\mu$-almost everywhere and thus $\gamma_1 = \gamma_2$. In particular, if at least one of $\mu$ or $\nu$ has infinite mass, then $\zcmcoup(\mu,\nu)$  is a singleton.
\end{theorem}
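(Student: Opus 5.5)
The plan is to adapt McCann's Aleksandrov-type uniqueness argument, working only away from the origin and using the normalization $(0,0)\in\partial\psi_1\cap\partial\psi_2$ to control the mass that the improper terms $\gamma_i(\cbr{0}\times\point)$ can absorb. By \cref{thm:expr_zcmcoup}, $\mu(\Rdmz\sauf\dom\nabla\psi_i)=0$ for $i=1,2$. Moreover, for every Borel $B\subset\Rdmz$ we have
\[
    \nu(B)=(\nabla\psi_i)_\#\mu(B)+\gamma_i(\cbr{0}\times B).
\]
Since $(0,0)\in\partial\psi_i$, every $v\in\partial\psi_i(0)$ satisfies $\inpr{0-x,v-y}\ge0$ for all $(x,y)\in\partial\psi_i$. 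In particular, if $(0,b)\in\spt\gamma_i$ then $\inpr{x,b}\le\inpr{x,y}$ on $\partial\psi_i$, and both $\psi_i$ attain their minimum at $0$. We may also normalize $\psi_1(0)=\psi_2(0)=0$.

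We argue by contradiction, following McCann. Suppose $\nabla\psi_1\neq\nabla\psi_2$ on a set of positive $\mu$-measure. Then there is a point $x_0\neq0$ of $\mu$-density in the set where both gradients exist and differ. Adding an affine function $\ell(x)=\inpr{x,c}+a$ to $\psi_2$ cannot be used freely here, since it would destroy the zero-coupling structure. Instead, we consider the sets
\[
    U=\cbr{x\mid \psi_1(x)<\psi_2(x)+\eps\,\inpr{x,\cdot}\ \ldots},
\]
and Aleksandrov's lemma. The cleanest route is to take $U_t=\cbr{x\mid \psi_1(x)>\psi_2(x)+t}$ for suitable levels $t$. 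By Aleksandrov's lemma, $\partial\psi_1(U_t)\subset\partial\psi_2(U_t)$, up to the image of a Lebesgue- and $\mu$-negligible set of nondifferentiability (this is where the Hausdorff dimension hypothesis enters). Choosing $t>0$ keeps $0\notin\cl U_t$, because $\psi_1(0)-\psi_2(0)=0<t$, so $U_t$ is bounded away from the origin. Near $x_0$ we perturb via $\psi_2+t$ plus a small linear tilt, as in McCann, so that $U_t$ contains a set $W$ of positive $\mu$-measure on which $\nabla\psi_2\notin\nabla\psi_1(U_t)$, still with $0\notin\cl U_t$.

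Next we compare masses on $B=\nabla\psi_1(U_t\cap\dom\nabla\psi_1)\sauf\cbr{0}$, which is bounded away from $0$ using the monotonicity relation with $(0,0)$. On one hand,
\[
    \mu\bigl((\nabla\psi_2)^{-1}(B)\bigr)\ge\mu(U_t)
\]
by the inclusion, while $(\nabla\psi_2)^{-1}(B)$ also captures extra mass, yielding strict inequality $(\nabla\psi_2)_\#\mu(B)>(\nabla\psi_1)_\#\mu(B)$, with all quantities finite. Then $\nu(B)$ gives $\gamma_1(\cbr{0}\times B)>\gamma_2(\cbr{0}\times B)\ge0$. We then use the improper mass: if $(0,b)\in\spt\gamma_1$ with $b\in\cl B$, then $b\in\partial\psi_1(0)$, so $\psi_1(x)\ge\inpr{x,b}$. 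Because $b=\nabla\psi_1(x)$ for some $x\in U_t$, taken as a limit, monotonicity at $0$ should force $\psi_1(x)-\psi_2(x)\le 0<t$, contradicting $x\in U_t$. Making this step airtight, i.e.\ showing $\gamma_1(\cbr{0}\times B)=0$ by the choice of $t>0$ and the normalization at $0$, is the main obstacle. I would first try to show that $\spt\gamma_1\cap(\cbr{0}\times B)$ forces $\psi_1$ to be affine along the segment $[0,x]$ with slope $b$, hence $\psi_1(x)=\inpr{x,b}\le\psi_2(x)$, which contradicts $x\in U_t$.

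For the final assertion, suppose $\mu$ or $\nu$ has infinite mass. Then $(0,0)\in\spt\gamma_i$, as in the proof of \cref{lem:necessary-proper}, hence $(0,0)\in\partial\psi_i$ for the Rockafellar potentials of \cref{thm:rockafellar}. The first part then gives $\nabla\psi_1=\nabla\psi_2$ $\mu$-a.e. By the uniqueness identities in \cref{thm:expr_zcmcoup}, $\gamma_1$ and $\gamma_2$ agree on $\Rdmz\times\Rd$ and on $\cbr{0}\times\Rdmz$, so $\gamma_1=\gamma_2$. Existence is given by \cref{thm:existence}.
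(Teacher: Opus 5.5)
\textbf{There is a genuine gap: the core comparison is oriented the wrong way, so you try to kill the wrong improper term.}

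Take $U_t=\{\psi_1-\psi_2>t\}$ with $t=\psi_1(x_0)-\psi_2(x_0)$, and set $B=\partial\psi_1(U_t)$. Aleksandrov's lemma, with the larger function $\psi_1$ in the role of $\phi$, gives $(\nabla\psi_2)^{-1}(B)\subset U_t\sauf\ball(x_0,r)$ for some $r>0$. On the other hand, trivially $(\nabla\psi_1)^{-1}(B)\supset U_t\cap\dom\nabla\psi_1$.

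Hence, provided $\mu(U_t)<\infty$ and $\mu(U_t\cap\ball(x_0,r))>0$, you get $(\nabla\psi_1)_\#\mu(B)>(\nabla\psi_2)_\#\mu(B)$. This is the reverse of your inequality. The identity for $\nu(B)$ then forces $\gamma_2(\{0\}\times B)>\gamma_1(\{0\}\times B)\ge 0$. So it is $\gamma_2(\{0\}\times B)$ that must vanish; $\gamma_1(\{0\}\times B)$ is simply discarded as nonnegative.

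Your plan for showing $\gamma_1(\{0\}\times B)=0$ cannot work. From $b\in\partial\psi_1(0)\cap\partial\psi_1(x)$ you only get $\psi_1(x)=\inpr{x,b}$, which says nothing about $\psi_2(x)$. Nothing prevents $\gamma_1$ from extracting mass at the origin into $B$.

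The correct step runs through $\psi_2$. If $(0,b)\in\partial\psi_2$ with $b\in B$, then $0\in(\partial\psi_2)^{-1}(\partial\psi_1(U_t))\subset U_t$. This is impossible, since $\psi_1(0)-\psi_2(0)=0<t$. The step needs Aleksandrov's inclusion for the full subdifferential $\partial\psi_2$, not just for $\nabla\psi_2$, because $\psi_2$ is in general not differentiable at $0$. That is precisely the paper's extended Aleksandrov lemma.

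\textbf{Two further steps lack a valid argument.}

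(i) Finiteness. You cannot get $0\notin\cl U_t$ from $\psi_1(0)-\psi_2(0)<t$. The origin may lie on the boundary of $\dom\psi_i$, where convex functions need not be continuous (cf.\ the $y^4/x^2$ example in the paper), and $U_t$ can indeed approach $0$. Likewise $B$ is not shown to be bounded away from $0$; only $0\notin B$ is available, and it is all you need. What holds, and suffices, is $\mu(U_t)<\infty$:
\begin{itemize}
\item the set $\{\nbr{\nabla\psi_1}\ge 1\}$ has finite $\mu$-mass, because $(\nabla\psi_1)_\#\mu(\ball(0,1)^c)\le\nu(\ball(0,1)^c)<\infty$;
\item on the rest of $U_t\cap\dom\nabla\psi_1$, the normalization at $0$ gives $t<\psi_1(x)\le\inpr{x,\nabla\psi_1(x)}\le\nbr{x}$, so that part lies outside $\ball(0,t)$.
\end{itemize}

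(ii) Choice of $x_0$. The lemma requires $x_0$ to lie on the level $t=\psi_1(x_0)-\psi_2(x_0)$. You also need $t>0$, $\nabla\psi_1(x_0)\ne\nabla\psi_2(x_0)$, and $\mu(U_t\cap\ball(x_0,r))>0$ for every $r>0$. None of the usual devices is available:
\begin{itemize}
\item a linear tilt is excluded, as you note yourself;
\item a constant shift is excluded too, since it breaks $\psi_1(0)=\psi_2(0)$, which is what keeps the origin out of $U_t$;
\item McCann's device of swapping the two functions fails, because the opposite set $\{\psi_1-\psi_2<t\}$ contains the origin.
\end{itemize}

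The paper obtains such a point in two stages. First it discards the level $0$ via McCann's implicit function theorem, so that, after relabeling, $\mu$ charges $\{\psi_1>\psi_2\}\cap\{\nabla\psi_1\ne\nabla\psi_2\}$. Then it uses a countability argument. For fixed $n$, the levels $t>0$ admitting a point $x\in\spt\mu$ on that level with $\mu(U_t\cap\ball(x,2/n))=0$ produce pairwise disjoint open sets, so there are only countably many. Meanwhile uncountably many levels are attained on the set of positive mass, since each single level set there is $\mu$-null.

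Your deduction of the singleton claim from the first part is fine.
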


The consequence $\gamma_1=\gamma_2$ stated in \cref{thm:unique} follows from the representation in \cref{thm:expr_zcmcoup}. In the proof of \cref{thm:unique}, we actually show the slightly stronger statement that $\nabla \psi_1(x) = \nabla \psi_2(x)$ for any $x \in \spt \mu$ in the domains of both $\nabla \psi_1$ and $\nabla \psi_2$.

In the proof of \cref{thm:unique}, we cannot proceed directly as in the proof of the Main Theorem in \cite{McCann1995} and take any $p \in \spt\mu$ such that $\nabla \psi_1(p) \ne \nabla \psi_2(p)$ to obtain a contradiction, since the pathological set $M$ obtained in that proof via Aleksandrov's lemma must have finite measure, as otherwise an expression of the form infinity minus infinity would arise. This comes at the price of a long build-up in Steps~1--5 in the proof of Theorem~\ref{thm:unique} before arriving at the said contradiction in Step~6.


\begin{remark}
\label{rk:c-e_uniqueness}
In \cref{thm:unique}, the assumption $(0,0) \in \partial \psi_1 \cap \partial \psi_2$ implies that the convex functions $\psi_1$ and $\psi_2$ attain their global minima at the origin and that $0$ is left invariant by their gradients when they are defined at the origin. Because of the nature of zero-couplings, the assumption is essential for the uniqueness of the gradient to hold.
Indeed, as a counter-example in dimension $d = 1$, let $\mu$ and $\nu$ be the Lebesgue measures on $[-1, 0]$ and $[0, 1]$, respectively. Then $\gamma_1 = \mu \otimes \delta_0 + \delta_0 \otimes \nu$ is a zero-coupling of $\mu$ and $\nu$, and belongs to $\zcmcoup(\mu,\nu)$, but so does the measure $\gamma_2 = (\Id \times \nabla \psi)_{\#} \mu$ where $\nabla \psi(x) = x+1$. In particular, the support of $\gamma_1$ is contained in the cyclically monotone set $(\reals_{\le 0} \times \{0\}) \cup (\{0\} \times \reals_{\ge 0})$, and the one of $\gamma_2$ in the cyclically monotone set $\cbr{(x,x+1): x \in \reals}$; incidentally, note that $\gamma_2$ is a true coupling rather than just a zero-coupling.
\end{remark}

\begin{remark}
    When $\mu$ and $\nu$ have equal finite mass, then their extensions $\tilde\mu \in [\mu]$ and $\tilde\nu \in [\nu]$ to $\Rd$ defined by $\tilde{\mu}(\cbr{0}) = \tilde{\nu}(\cbr{0}) = 0$ have equal finite mass too, and Corollary~14 in \citet{McCann1995} yields a unique cyclically monotone coupling $\pi$ between $\tilde\mu$ and $\tilde\nu$. It is clear that its restriction $\res\pi$ to $\reals^{2d}_{\smallsetminus 0}$ lies in $\zcmcoup(\mu,\nu)$ while there might be infinitely many other zero-couplings whose supports are cyclically monotone. We refer to the example in Remark~\ref{rk:c-e_uniqueness}.
\end{remark}

Theorem~\ref{thm:unique} concerns the uniqueness of zero-couplings with cyclically monotone support and requires a condition on the origin $(0, 0)$ in $\RdRd$. This condition is not needed in the next result, which concerns proper representations.

\begin{theorem}[Uniqueness of proper representations]
    \label{thm:uniquerepr}

    Let $\mu,\nu \in \Mz(\Rd)$ with $\mu$ vanishing on sets of Hausdorff dimension at most $d-1$, and let $\psi_1,\psi_2$ be two closed convex functions such that $\nu = \res((\nabla \psi_j)_{\#}\mu)$ and $\mu(\Rdmz \sauf \dom \nabla\psi_j) = 0$ for $j=1,2$. Then $\nabla \psi_1=\nabla\psi_2$ $\mu$-almost everywhere.    
\end{theorem}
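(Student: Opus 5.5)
We reduce to the Aleksandrov--McCann uniqueness argument behind \cref{thm:McCann}. We cannot quote \cref{thm:unique}, since the assumption $(0,0)\in\partial\psi_1\cap\partial\psi_2$ is not available. Instead we localise everything to small sets bounded away from the origin in both the source and the target, so that every mass we compare is finite.

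\emph{Setup.} By assumption, $\mu$ vanishes on sets of Hausdorff dimension at most $d-1$ and $\mu(\Rdmz\sauf\dom\nabla\psi_j)=0$. Hence, as in the proof of \cref{thm:expr_zcmcoup} (Anderson--Klee), $\mu$ is concentrated on a Borel set $D\subset\Rdmz$ on which both gradients exist. The key identity is that for every Borel $B\subset\Rdmz$,
\[
    \mu\rbr{(\nabla\psi_1)^{-1}(B)} = \nu(B) = \mu\rbr{(\nabla\psi_2)^{-1}(B)},
\]
and both sides are finite whenever $0\notin\cl B$.

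\emph{Choosing a good point.} Suppose, for contradiction, that $E=\{x\in D:\nabla\psi_1(x)\ne\nabla\psi_2(x)\}$ has positive $\mu$-measure. For $x\in E$, at least one of the two gradients is nonzero. After possibly swapping the labels $1,2$ (the hypotheses are symmetric), we get a subset $E'\subset E$ of positive measure on which $\nabla\psi_2(x)\neq 0$. Choose $p\in E'\cap\spt\mu$ with $p\ne0$ and such that every neighbourhood of $p$ meets $E'$ in positive $\mu$-measure.

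\emph{Aleksandrov--McCann construction.} Subtract affine functions so that $\psi_1(p)=\psi_2(p)$, and set $\delta=\nabla\psi_2(p)-\nabla\psi_1(p)\neq0$. Following McCann's Main Theorem, define
\[
    M=\{x:\psi_2(x)<\psi_1(x)+\eps\}\cap U,
\]
where $U$ is a small ball around $p$. As in McCann (via Aleksandrov's lemma), after shrinking $\eps$ and $U$ and perturbing the affine normalisation, the following holds. $M$ is relatively compact in $\Rdmz$, has positive $\mu$-measure, and
\[
    X:=(\nabla\psi_1)^{-1}\rbr{\nabla\psi_2(M\cap D)}\cap D\subset M .
\]
Moreover, $M\sauf X$ contains a neighbourhood portion of $p$ whose $\mu$-measure is positive; this last point uses the density choice of $p$. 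In addition, since $\nabla\psi_2$ is continuous on $D$ near $p$ (gradients of convex functions are continuous where they exist) and $\nabla\psi_2(p)\ne0$, we can shrink $U$ so that $B:=\nabla\psi_2(M\cap D)$ stays inside a small ball around $\nabla\psi_2(p)$. Then $0\notin\cl B$.

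\emph{Mass comparison.} $B$ is analytic and hence $\mu$-measurable; alternatively, one can work with the Borel set $\partial\psi_2(M)$ minus a null set. Applying the key identity to $B$ gives, with all quantities finite,
\[
    \mu(M)\le\mu\rbr{(\nabla\psi_2)^{-1}(B)}=\nu(B)=\mu\rbr{(\nabla\psi_1)^{-1}(B)}=\mu(X)<\mu(M).
\]
This is a contradiction, so $\nabla\psi_1=\nabla\psi_2$ $\mu$-almost everywhere. Note that no mass at the origin enters this computation, because $M$ and $B$ are both bounded away from $0$.

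\emph{Main obstacle.} The hardest step is getting the inclusion $X\subset M$ together with $\mu(M\sauf X)>0$ while keeping both $M$ and $\nabla\psi_2(M)$ away from the origin. The inclusion comes from McCann's argument, which only needs convexity of $\psi_2-\psi_1$ on sublevel sets and a small ball. The delicate point is that for $x\notin M$ with $\nabla\psi_1(x)\in\nabla\psi_2(M)$ we need a contradiction. We will handle this exactly as McCann does, via the monotonicity of subdifferentials along the boundary $\{\psi_2=\psi_1+\eps\}$, checking that his argument is purely local and never uses total masses.
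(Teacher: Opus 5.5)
The step you flag as the main obstacle does not hold as stated, because McCann's argument is not local. Aleksandrov's lemma (\cref{lem:aleksandrov-extended}) concerns a \emph{whole} superlevel set of the difference of the potentials. Suppose $\nabla\psi_1(x)\in\partial\psi_2(m)$, and let $\ell$ be the affine minorant of $\psi_2$ touching at $m$ with slope $\nabla\psi_1(x)$. Then $x$ minimises $\psi_1-\ell$, whence $(\psi_2-\psi_1)(x)\ge(\psi_2-\psi_1)(m)$. This gives $X\subset\{\psi_2-\psi_1>c\}$ whenever $M$ is contained in that set. Note the orientation: for your sublevel set $\{\psi_2<\psi_1+\eps\}$ nothing follows. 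In any case it never gives $X\subset U$. Concretely, take $d=1$, $\psi_1(x)=x^2/2$, $\psi_2(x)=x^2/2+x$, $p=1$ and $U$ a small interval around $p$; then $X=M+1$, which is disjoint from $M$.

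Since $\psi_2-\psi_1$ is not convex, its superlevel sets need not be localised near $p$. The hypotheses also leave you no room to force this. A common affine change of $\psi_1,\psi_2$ alters $\psi_2-\psi_1$ at most by a constant. Different affine changes destroy the identity $\mu((\nabla\psi_1)^{-1}(B))=\mu((\nabla\psi_2)^{-1}(B))$. So the last inequality $\mu(X)<\mu(M)$ in your chain is unjustified.

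If you keep McCann's global superlevel set instead, you are back at the difficulty you wanted to avoid. $\mu(M)$ may be infinite, and $\partial\psi_2(M)$ may contain $0$, where the hypothesis $\nu=\res((\nabla\psi_j)_{\#}\mu)$ says nothing about $\mu((\nabla\psi_j)^{-1}(\{0\}))$. Steps~4--6 of the proof of \cref{thm:unique} exist precisely to handle this. They use $(0,0)\in\partial\psi_1\cap\partial\psi_2$ essentially, to get $\mu(A_t^-)<\infty$ and $0\notin\partial\phi(A_t^-)$.

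The idea you are missing is that this hypothesis \emph{is} available, contrary to your opening remark; this is how the paper argues. Suppose $\mu(\Rdmz)=\infty$. Then $\gamma_j=\res((\Id\times\nabla\psi_j)_{\#}\mu)$ is a zero-coupling of $\mu$ and $\nu$; this uses $\mu(\Rdmz\sauf\dom\nabla\psi_j)=0$. Its support lies in the closed cyclically monotone set $\partial\psi_j$, and it has infinite mass. So \cref{lem:Psi-finite} gives $(0,0)\in\spt\gamma_j\subset\partial\psi_j$, and \cref{thm:unique} applies.

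Now suppose $\mu(\Rdmz)<\infty$. Then $\nu(\Rdmz)\le\mu(\Rdmz)$, and you can add to $\nu$ an atom of mass $\mu(\Rdmz)-\nu(\Rdmz)$ at the origin. This makes both $\nabla\psi_j$ genuine transport maps between finite measures of equal mass. The atom is exactly what equates $\mu((\nabla\psi_1)^{-1}(\{0\}))$ and $\mu((\nabla\psi_2)^{-1}(\{0\}))$. McCann's uniqueness result (\cref{thm:McCann}) then applies directly.
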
 

A combination of Theorems~\ref{thm:existence}, \ref{thm:expr_zcmcoup} and \ref{thm:unique} leads to a result similar to the Main Theorem in \citet{McCann1995}, where no moment assumption is required. 
The major differences are the necessity of at least one of the zero-marginals to have infinite mass, and the addition of an improper term in the representation when there is a need for mass to be sent from the origin outside of the punctured space.


\begin{corollary}[Brenier--McCann theorem for zero-couplings]
    \label{cor:BMcC}
    Let $\mu,\nu\in\Mzp(\Rd)$ be such that at least one of them has infinite mass and suppose $\mu$ vanishes on sets of Hausdorff dimension at most $d-1$.
    Then $\zcmcoup(\mu,\nu)$ is a singleton with single element $\gamma$, say, and there exists a closed convex function $\psi$ on $\Rd$ with $\spt \gamma \subset \partial \psi$ such that
    \begin{align*}
        \gamma(A \times B) 
        &= (\Id \times \nabla\psi)_{\#}\mu(A\times B)
        = \mu \rbr{A \cap (\nabla\psi)^{-1}(B)}, &&
        A \in \borel(\Rdmz), \, B \in \borel(\Rd), \\
        \gamma(\cbr{0} \times B)
        &= \nu(B) - (\nabla\psi)_{\#} \mu(B), &&
        B \in \borel(\Rd), \, 0 \not\in \cl(B).
    \end{align*}
    Although $\psi$ may not be unique, the map $\nabla\psi$ is defined and uniquely determined $\mu$-almost everywhere. If, in addition, $\gamma$ is proper, then
    \begin{equation}
    \label{eq:mu2nu}
        \nu(B) 
        = (\nabla\psi)_{\#}\mu(B) 
        = \mu \rbr{\Rdmz \cap (\nabla\psi)^{-1}(B)},
        \qquad B \in \borel(\Rdmz).
    \end{equation}
\end{corollary}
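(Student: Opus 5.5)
The corollary is assembled from Theorems~\ref{thm:existence}, \ref{thm:expr_zcmcoup} and~\ref{thm:unique}, with Rockafellar's Theorem~\ref{thm:rockafellar} supplying the convex potential.

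\emph{Existence and uniqueness of $\gamma$.} Theorem~\ref{thm:existence} gives $\zcmcoup(\mu,\nu)\neq\emptyset$. The last sentence of Theorem~\ref{thm:unique} gives uniqueness, since one of $\mu,\nu$ has infinite mass and $\mu$ vanishes on small sets. Call the single element $\gamma$.

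\emph{The potential $\psi$ and the representation.} Since $\spt\gamma$ is cyclically monotone, Rockafellar's Theorem~\ref{thm:rockafellar} gives a closed convex $\psi$ with $\spt\gamma\subset\partial\psi$. Theorem~\ref{thm:expr_zcmcoup} applied to $\gamma$ and $\psi$ then yields three things:
\begin{itemize}
\item $\mu(\Rdmz\sauf\dom\nabla\psi)=0$, so $\nabla\psi$ is defined $\mu$-almost everywhere;
\item the two displayed identities for $\gamma(A\times B)$ and $\gamma(\{0\}\times B)$;
\item in the proper case, the representation \eqref{eq:proper}, whose second half is exactly \eqref{eq:mu2nu}.
\end{itemize}

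\emph{Uniqueness of $\nabla\psi$ $\mu$-a.e.} This is the step needing care. Let $\psi_1,\psi_2$ be two closed convex functions with $\spt\gamma\subset\partial\psi_i$. To apply Theorem~\ref{thm:unique} with $\gamma_1=\gamma_2=\gamma$, I must check that $(0,0)\in\partial\psi_1\cap\partial\psi_2$.

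Because one marginal has infinite mass, $\gamma$ has infinite mass. Since $\gamma\in\Mz(\RdRd)$ by Lemma~\ref{lm:zcoup_in_Mzp}, every punctured ball around the origin of $\RdRd$ has infinite $\gamma$-mass, so $(0,0)\in\spt\gamma$. This is the argument already used in the proof of Lemma~\ref{lem:necessary-proper}. Hence $(0,0)\in\spt\gamma\subset\partial\psi_i$ for $i=1,2$.

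Theorem~\ref{thm:unique} then gives $\nabla\psi_1=\nabla\psi_2$ $\mu$-a.e. So $\nabla\psi$ does not depend, $\mu$-a.e., on the choice of potential, even though $\psi$ itself need not be unique.
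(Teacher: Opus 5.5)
Your proposal is correct and follows the paper's proof, which also combines Theorem~\ref{thm:existence}, the last assertion of Theorem~\ref{thm:unique}, and Theorem~\ref{thm:expr_zcmcoup}. The one addition is your explicit check that $(0,0)\in\spt\gamma\subset\partial\psi_1\cap\partial\psi_2$ (the content of Lemma~\ref{lem:Psi-finite}), which lets you apply Theorem~\ref{thm:unique} with $\gamma_1=\gamma_2=\gamma$ and so makes explicit the $\mu$-a.e.\ uniqueness of $\nabla\psi$ that the paper's short proof leaves implicit.
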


As we are interested in the existence of a transport map from $\mu$ to $\nu$ as in \cref{eq:mu2nu}, knowing whether the unique cyclically monotone coupling in \cref{cor:BMcC} is proper or not is crucial.
Sufficient criteria have been studied in \cref{sec:proper-zcoup} and will be applied in \cref{sec:EVTOT} to zero-couplings between limit measures in the definition of multivariate regular variation.


\begin{remark}
    \label{rk:convexnotcontinuous}
    A complication in the proof of \cref{thm:unique} is that the convex functions $\psi_i$ are not necessarily continuous at $0 \in \Rd$ relative to their effective domains, $\dom \psi_i = \cbr{\psi_i < \infty}$. In dimension $d=2$, define 
    \[
        \psi(x, y) = 
        \begin{cases} 
            y^4/x^2 & \text{if $x > 0$,} \\ 
            0 & \text{if $(x,y) = (0, 0)$,} \\ 
            \infty & \text{otherwise,} 
        \end{cases} 
    \]
    a variation on the counterexample in \citet[pp.~83--84]{Rockafellar1970}. This function is convex (its Hessian matrix has positive trace and determinant), but is not continuous in $(0, 0)$ relative to its effective domain, since $\psi(y^2, y) = 1$ for all $y > 0$. Still, it may serve as the potential function of a cyclically monotone transport between two infinite measures in $\Mz(\Rd)$.
    Let $\mu$ be the measure on the half-plane $\cbr{(x,y) \mid x > 0}$ with Lebesgue density
    \[
        \mu(\diff(x,y)) = \frac{x^3}{(x^2+y^2)^3} \diff x \, \diff y, \qquad x > 0, y \in \reals.
    \]
    In polar coordinates $(x, y) = (r \cos \theta, r \sin \theta)$, its density is
    \[
        (T_{\mathrm{polar}})_{\#} \mu(\diff(r,\theta))
        = (\cos \theta)^3 \diff \theta \, r^{-2} \diff r,
        \qquad r > 0, -\pi/2 < \theta < \pi/2,
    \]
    where $T_{\mathrm{polar}}(x, y) = (r, \theta)$ are the polar coordinates of $(x, y) \in \reals^2_{\smallsetminus 0}$.
    We find that $\mu \in \Mz(\reals^2)$ and that $\mu$ is a possible limit measure in the definition of multivariate regular variation in \cref{sec:EVTOT} below. 
    The push-forward measure $\nu = (\nabla \psi)_{\#} \mu$ is supported on the opposite half-plane $\cbr{(x,y) \mid x < 0}$. Its densities in Cartesian and polar coordinates can be computed to be
    \begin{align*}
        \nu(\diff(x,y))
        &= \frac{64 \abr{x}^3}{(4x^2+y^2)^3} \diff x \, \diff y, 
        && x < 0, y \in \reals; \\
        (T_{\mathrm{polar}})_{\#} \nu(\diff(r, \theta))
        &= \frac{64 \abr{\cos \theta}^3}{(1+2\cos^2 \theta)^3} \diff \theta \, r^{-2} \diff r,
        && r > 0, \pi/2 < \theta < 3\pi/2.
    \end{align*}
    Since its angular density is bounded, we have $\nu \in \Mz(\reals^2)$ too. 
\end{remark}

\begin{remark}
    While Brenier's Theorem characterizes the unique possible solution to the Kantorovich problem with quadratic distance as cost, \cref{cor:BMcC} provides, when at least one of the zero-marginals has infinite mass, an analogous representation for the unique candidate solution to the variant of this problem used in \citet{GuillenMou2019} to define a transportation metric for Lévy measures.
\end{remark}

\subsection*{Proofs}

We first prove a convenient extension of Aleksandrov's lemma as stated in \cite[Lemma~13]{McCann1995}. Here and below, if $B\subset\Rd$, and if $\psi$ is a convex function $\Rd\to\rset\cup\cbr{\infty}$, 
we define 
\[
    (\partial \psi)^{-1}(B) 
    = \cbr{x\in\Rd \mid \exists y \in B: y\in \partial \psi(x) }
    = \cbr{x\in\Rd \mid \partial \psi(x)\cap B \neq \emptyset }.
\]
\begin{lemma}[Aleksandrov's Lemma, extended]
    \label{lem:aleksandrov-extended}
    Let $\phi,\psi:\Rd\to\rset\cup\cbr{\infty}$ be convex functions, differentiable at some $p \in \Rd$, with  $\nabla\phi(p)\neq\nabla\psi(p)$. 
    Define $M = \{\phi - \phi(p)>\psi - \psi(p)\} \subset \dom\psi$, $X = (\nabla\psi)^{-1}(\partial\phi(M))$ and      $\tilde X = (\partial \psi)^{-1}(\partial\phi(M))$. 
    Then $X \subset \tilde X\subset M$, while $p$ lies at a positive distance from $X$, i.e., there is $r > 0$ such that $X \cap \ball(p,r) = \emptyset$. 
\end{lemma}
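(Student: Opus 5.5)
The three claims are $\tilde X\subset M$, $X\subset\tilde X$, and $X\cap\ball(p,r)=\emptyset$ for some $r>0$. I plan to prove them in that order.

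\emph{Step 1: $X\subset\tilde X$.} This is immediate. If $x\in\dom\nabla\psi$, then $\partial\psi(x)=\{\nabla\psi(x)\}$. So $\nabla\psi(x)\in\partial\phi(M)$ gives $\partial\psi(x)\cap\partial\phi(M)\neq\emptyset$.

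\emph{Step 2: $\tilde X\subset M$.} Normalize by subtracting constants so that $\phi(p)=\psi(p)=0$; this changes neither subdifferentials nor $M=\{\phi>\psi\}$.

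Take $x\in\tilde X$. There are $m\in M$ and $y\in\partial\phi(m)\cap\partial\psi(x)$. Suppose, for contradiction, that $x\notin M$, i.e.\ $\phi(x)\le\psi(x)$.

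Since $y\in\partial\psi(x)$, we have $x\in\dom\psi$, and for all $z$,
\[
\psi(z)\ge\psi(x)+\inpr{z-x,y}.
\]
Since $y\in\partial\phi(m)$, for all $z$,
\[
\phi(z)\ge\phi(m)+\inpr{z-m,y}.
\]
Evaluate the second inequality at $z=x$ and the first at $z=m$:
\[
\phi(x)\ge\phi(m)+\inpr{x-m,y}, \qquad \psi(m)\ge\psi(x)+\inpr{m-x,y}.
\]
Adding gives $\phi(x)+\psi(m)\ge\phi(m)+\psi(x)$, that is, $\phi(x)-\psi(x)\ge\phi(m)-\psi(m)>0$. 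Here we use $m\in M\subset\dom\psi$, so $\psi(m)$ is finite. This contradicts $\phi(x)\le\psi(x)$, hence $x\in M$.

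One care point: $\phi(m)$ must be finite. This holds because $\partial\phi(m)\neq\emptyset$ forces $m\in\dom\phi$ by the definition in \cref{eq:subdiff}. The argument is the standard one from \cite[Lemma~13]{McCann1995}; the only adaptation is tracking infinite values.

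\emph{Step 3: $p$ lies at positive distance from $X$.} This is the main step. Put $v=\nabla\phi(p)-\nabla\psi(p)\neq0$.

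Suppose, for contradiction, that there are $x_n\in X$ with $x_n\to p$. Then $\psi$ is differentiable at $x_n$, and $y_n:=\nabla\psi(x_n)\in\partial\phi(m_n)$ for some $m_n\in M$.

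First, $y_n\to\nabla\psi(p)$. Differentiability at $p$ implies $p\in\intr\dom\psi$. Subgradients of a convex function converge to the gradient at a point of differentiability, since the subdifferential is locally bounded and has closed graph near interior points.

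Next, I use the contact structure. Adding the two subgradient inequalities at $x_n$ and $m_n$, as in Step 2, gives for every $z$
\[
\phi(z)-\psi(z)\ge(\phi-\psi)(m_n)+\bigl[\psi(x_n)-\psi(z)+\inpr{z-x_n,y_n}\bigr]+\bigl[\inpr{z-m_n,y_n}-\inpr{z-x_n,y_n}\bigr]+\dots
\]
This bookkeeping is messy. A cleaner route is Aleksandrov's original idea, which I would follow as in McCann's proof. Write $\phi(z)=\inpr{z-p,\nabla\phi(p)}+o(\nbr{z-p})$ and $\psi(z)=\inpr{z-p,\nabla\psi(p)}+o(\nbr{z-p})$. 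Then near $p$ the set $M$ is approximately the half-space $\{\inpr{z-p,v}>0\}$. A supporting hyperplane of $\phi$ at a point $m\in M$ with slope $y$ close to $\nabla\psi(p)$ lies below $\phi$ everywhere.

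Evaluate that supporting inequality at $z=p+t v$ for a fixed small $t>0$:
\[
\phi(p+tv)\ge\phi(m)+\inpr{p+tv-m,y}.
\]
Combine this with $\phi(m)>\psi(m)$, with $\psi(m)\ge\psi(x_n)+\inpr{m-x_n,y_n}$, and with the first-order expansions at $p$. Using $y=y_n\approx\nabla\psi(p)$ and $x_n\approx p$, this yields roughly $t\nbr{v}^2\le o(t)+o(1)$ as $n\to\infty$. Choosing $t$ small first and then $n$ large gives a contradiction.

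What remains is to make this estimate uniform despite $m_n$ possibly being far from $p$. That is where I expect the main difficulty. If needed, I would also use convexity of $\phi$ along segments to handle a nonlocal $m_n$.
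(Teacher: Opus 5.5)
Your Steps~1 and~2 are correct. Step~2 is McCann's argument; the paper uses it after subtracting the affine functions $\psi(p)+\inpr{\nabla\psi(p),\point-p}$ and $\phi(p)+\inpr{\nabla\psi(p),\point-p}$, which leaves $M$, $X$, $\tilde X$ unchanged, and then invokes the proof of Lemma~13 in \cite{McCann1995} for the positive-distance claim.

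The gap is Step~3, which is the substantive part of the lemma. You leave it unfinished, the difficulty you anticipate does not exist, and your sketched estimate has the wrong sign. After normalizing $\phi(p)=\psi(p)=0$, take $x\in\tilde X$, $m\in M$ and $y\in\partial\psi(x)\cap\partial\phi(m)$. For every $z\in\Rd$,
\[
\phi(z)\ge\phi(m)+\inpr{z-m,y}>\psi(m)+\inpr{z-m,y}\ge\psi(x)+\inpr{z-x,y}.
\]
Here $m$ cancels exactly, so there is no uniformity in $m_n$ to control. This is the missing idea: the tangent plane of $\psi$ at any point of $\tilde X$ is a global minorant of $\phi$.

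On the sign: put $v=\nabla\phi(p)-\nabla\psi(p)$ and let $y=y_n\to\nabla\psi(p)$, $x=x_n\to p$. At $z=p+tv$ the inequality becomes $t\nbr{v}^2+o(t)\ge o(1)$, which is perfectly consistent and gives no contradiction. A contradiction appears only at $z=p-tv$, where letting $n\to\infty$ gives $-t\nbr{v}^2+o(t)\ge 0$.

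With the $m$-free inequality the proof closes at once. Suppose $x_n\in X$ with $x_n\to p$. Since $p\in\intr\dom\psi$, you have $\psi(x_n)\to\psi(p)$ and $\nabla\psi(x_n)\to\nabla\psi(p)$. Letting $n\to\infty$ in $\phi(z)\ge\psi(x_n)+\inpr{z-x_n,\nabla\psi(x_n)}$ gives $\phi(z)\ge\phi(p)+\inpr{z-p,\nabla\psi(p)}$ for all $z\in\Rd$, using $\phi(p)=\psi(p)$. That is, $\nabla\psi(p)\in\partial\phi(p)=\cbr{\nabla\phi(p)}$, contradicting $\nabla\phi(p)\neq\nabla\psi(p)$. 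Hence some ball $\ball(p,r)$ misses $X$.

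Since subgradients of $\psi$ at points tending to $p$ converge to $\nabla\psi(p)$, the same argument even shows that $p$ is at positive distance from $\tilde X$.
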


\begin{proof}
The inclusion $X\subset\tilde X$ follows immediately from the definitions, since $\nabla\psi(x) = y$ is equivalent to $\partial\psi(x) = \cbr{y}$. 

{\bf a.} In case $\phi(p)=\psi(p)=0$ and $\nabla\psi(p) = 0$, the result follows from the proof of Lemma~13 in \cite{McCann1995} that $X\subset M$. That proof also shows  that $\tilde X\subset M$ upon noting that by definition of $(\partial\psi)^{-1}$, for any $x\in\tilde X$, there indeed exists $y\in\partial \phi(M)$ such that $y\in\partial \psi(x)$. 

{\bf b.} In the general case, define 
    \begin{align*}
        \bar\phi(x) & = \phi(x) - \phi(p) - \langle \nabla\psi(p), x-p  \rangle ,\\
         \bar\psi(x) & = \psi(x) - \psi(p) - \langle \nabla\psi(p), x-p  \rangle.
    \end{align*}
Clearly, $\bar \phi(p) = \bar \psi(p) = 0$. For all $x\in\dom\partial\phi=\dom\partial\bar\phi$,
we have $\partial \bar \phi(x)= \partial \phi(x) - \nabla\psi(p) = \cbr{y - \nabla\psi(p) \mid y \in \partial \phi(x)}$, and for all $x \in \dom \partial\psi = \dom  \partial\bar\psi$, we have $\partial \bar \psi(x) = \partial \psi(x) - \nabla \psi(p)$; in particular, $p\in\dom \nabla \bar \psi $ and $\nabla\bar\psi(p) = 0$. Applying part~{\bf a.} to  the functions $\bar\phi,\bar\psi$ at point $p$, with  $M^\prime = \{\bar \phi > \bar \psi\}$, $ \tilde X^\prime =  (\partial\bar\psi)^{-1}(\partial \bar \phi(M^\prime))$ and 
$X^\prime = (\nabla\bar\psi)^{-1}(\partial \bar \phi(M^\prime))$,  
we obtain the inclusions $ X^\prime \subset \tilde X^\prime \subset M^\prime$ and the fact that $p$ is at positive distance from $X^\prime$. From the definitions, we have
\[
    M^\prime 
    = \cbr{ x \in \Rd \mid \phi(x) - \phi(p) > \psi(x) - \psi(p) }
    = M, 
\]
and then
\begin{align*}
    \tilde X^\prime 
    &= \cbr{x \in \dom \partial\bar \psi \mid 
    \partial \bar \psi(x) \cap \partial\bar\phi(M^\prime) \ne \emptyset} \\
    & = \cbr{x \in \dom \partial \psi \mid 
    \rbr{\partial \psi(x) - \nabla \psi(p)} \cap \rbr{\partial\phi(M) - \nabla \psi(p)} \ne \emptyset } \\
    & = \cbr{x \in \dom \partial \psi \mid 
        \partial \psi(x) \cap \partial\phi( M) \ne \emptyset} 
    = \tilde X. 
\end{align*}
A similar reasoning yields $X^\prime = X$.
The desired result then follows. 
\end{proof}

\begin{proof}[Proof of~\cref{thm:unique}]
For convenience, we will change notation and let the two convex functions $\psi_1$ and $\psi_2$ be denoted by $\psi$ and $\phi$ instead, and the two zero-couplings $\gamma_1$ and $\gamma_2$ with supports contained in their subdifferentials by $\gamma_\psi$ and $\gamma_\phi$, respectively.

\medskip{}

\noindent\emph{Proof of $\gamma_\psi=\gamma_\phi$}

\medskip{}

\noindent\emph{Step 1.} ---
By \cref{thm:expr_zcmcoup}, $\psi$ and $\phi$ are differentiable $\mu$-almost everywhere, while $\supp\mu$ is contained in $\cl(\dom\psi)\cap\cl(\dom\phi)$, both closures being convex and therefore having boundaries of Hausdorff dimension at most $d-1$.
As we assumed $(0,0) \in \partial\psi\cap\partial\phi$, the functions $\psi$ and $\phi$ attain their minima at $0$, so that $\psi(0)$ and $\phi(0)$ are certainly finite. Therefore, without losing generality, take $\psi(0)= \phi(0)= 0$.
Furthermore, we can restrict attention to the interiors of the domains of $\psi$ and $\phi$ via
\begin{equation}
\label{eq:calD}
	\mathcal{D} := \intr(\dom\psi) \cap \intr(\dom\phi) = \intr(\dom\psi\cap\dom\phi),
\end{equation}
since the above argument shows that $\tilde \mu(\Rd\setminus \mathcal{D}) = 0$, where, as before, $\tilde{\mu}$ is the extension of $\mu$ to $\Rd$ determined by $\tilde{\mu}(\cbr{0})=0$.
Consider the following subsets of $\mathcal{D}$ [see~\cref{eq:domains}]:
\begin{align}
\label{eq:domnablas}
V  & :=\dom\nabla\psi \cap \dom\nabla\phi, \\
\nonumber
W  & :=\left\{ y\in V \mid \nabla\psi(y)\neq\nabla\phi(y)\right\} ,\\
\label{eq:psiphidiff0}
A  & :=\left\{ y\in\mathcal{D} \mid \psi(y)-\phi(y)\neq 0 \right\} .
\end{align}

By \citet[Theorem~2.2]{AlbertiAmbrosio1999} (see also \citep{AndersonKlee1952}),
$\mathcal{D}\sauf V$ is contained in the union of two sets with Hausdorff dimension at most $d-1$, so 
\begin{equation}
	\label{eq:muDV}
	\tilde\mu(\mathcal{D}\sauf V) = 0 \quad \text{ and thus } \quad \tilde\mu(\Rd\sauf V) = 0
    .
\end{equation}


Our claim  that $\nabla\psi = \nabla\phi$ $\mu$-almost everywhere is thus equivalent to $\tilde \mu(W)=0$.

We first  prove  the claim made below the statement of the theorem, namely we show that $\tilde \mu(W)=0$ if and only if for all $x \in \spt \mu\cap V$ we have $\nabla\psi(x) = \nabla \phi(x)$, or in other words, if and only if 
\begin{equation}
    \label{eq:W_cap_sptmu_is_empty}
    W\cap \spt \mu = \emptyset.
\end{equation}
Of course if \cref{eq:W_cap_sptmu_is_empty} holds true then $\tilde\mu(W)=0$. Conversely, if the intersection in \cref{eq:W_cap_sptmu_is_empty} is non-empty, say there exists $x\in W\cap\spt \mu$, then by continuity of the gradients on their domain and the definition of $\spt\mu$, there exists an open ball $\ball(x,r)$ such that $\ball(x,r)\cap V \subset W$ and $\tilde \mu(\ball(x,r))>0$. But then $\tilde \mu(\ball(x,r)\cap V) = \tilde \mu(\ball(x,r))>0$ and thus $\tilde \mu(W)>0$.

To prove that $\tilde\mu(W)=0$, we assume the opposite,
\begin{equation}
\label{eq:wrongassumption}
	\tilde\mu(W) > 0,
\end{equation}
and we will construct a subset $M$ of $\Rdmz$ such that $\Rd \times M$ receives different masses from $\gamma_\psi$ and $\gamma_\varphi$, in contradiction to the assumption of the theorem that both measures are a zero-coupling of $\mu$ and $\nu$.

\medskip{}

\noindent\emph{Step 2.} ---
By Lemma~\ref{lem:smallset}, 
the set
\[
	W \sauf A = \cbr{ y\in V \mid 
		\psi(y) - \phi(y) = 0 \text{ and } \nabla\psi(y)\ne\nabla\phi(y)
	}
\]
has Hausdorff dimension at most $d-1$.
Therefore, $\tilde\mu(W\sauf A)=0$. By \eqref{eq:wrongassumption}, we obtain 
\begin{equation}
\label{eq:wrongassumption2}
	\tilde\mu(W \cap A) > 0. 
\end{equation}

\medskip{}

\noindent\emph{Step 3.} --- 
Recall $\mathcal{D}$ in \eqref{eq:calD}. For $t\in[-\infty, +\infty]$, define
\begin{align*}
A_{t}^{-} &:= \cbr{ y\in\mathcal{D} \mid \psi(y)-\phi(y)<t } ,\\
A_{t}^{+} &:= \cbr{ y\in\mathcal{D} \mid \psi(y)-\phi(y)>t } ,\\
A_{t}^{0} &:= \cbr{ y\in\mathcal{D} \mid \psi(y)-\phi(y)=t } .
\end{align*}
Then $A_{0}^{-}\cup A_0^{+}$ is a partition of $A$ in \eqref{eq:psiphidiff0} and thus, by \eqref{eq:wrongassumption2},
\[ 
	\tilde\mu(W\cap A_{0}^{-})+\tilde\mu(W\cap A_{0}^{+})=\tilde\mu(W\cap A)>0. 
\]
Therefore, $\tilde\mu(W\cap A_{0}^{-})>0$ or $\tilde\mu(W\cap A_{0}^{+})>0$. Without loss of generality, assume
\begin{equation}
\label{eq:wrongassumption3}
	\tilde\mu(W\cap A_{0}^{-}) > 0.
\end{equation}
In the other case, just switch the roles of $\psi$ and $\phi$ in the next steps.
\medskip{}

\noindent\emph{Step 4.} --- 
The goal of this step is to show that for all  $t< 0$, we have $\mu(A_t^-)<\infty$.

Fix $t<0$.  Since $(0,0) \in \partial\psi\cap\partial\phi$, we were able to choose $\psi(0)=\phi(0)=0$ in Step~1.
Consider the set
\[
    N= \cbr{x \in \dom \nabla \phi \sauf \{0\} \mid \nbr{\nabla \phi(x)} \ge 1}.
\]
From Theorem~\ref{thm:expr_zcmcoup} we have for every Borel set $C$ bounded away from $0$,
\[ 
    \infty 
    > \nu(C) 
    = (\nabla\phi)_\#\mu(C) + \gamma(\{0\}\times C)
    \ge (\nabla\phi)_\#\mu(C), 
\] 
since $\nu\in\Mz(\Rd)$. With $C= \ball(0,1)^c$, we obtain 
\[
    \infty > (\nabla\phi)_\#\mu \rbr{\ball(0,1)^c}
    = \mu\rbr{\{ x \in \Rdmz \cap \dom\nabla\phi \mid \|\nabla\phi(x)\| \ge 1 \}} =  \mu(N). 
\]
Hence $\mu(N) <\infty$. Letting 
\[ 
    S = \dom \nabla \phi  \setminus (N\cup\{0\}), 
\]
we have, since $\Rdmz \sauf S = \rbr{\Rdmz \sauf \dom \nabla \phi} \cup N$ and in view of \cref{eq:muDV}, 
\begin{equation}
    \label{eq:good_origin_neighbourhood_2}
    S\subset \dom \nabla \phi \subset \dom \phi,   ~~ \mu(\Rdmz \sauf S)<\infty,  \text{ and }
    \nbr{\nabla \phi(x)} < 1 \text{ for all } x \in S.
 \end{equation}
For any $x \in S$, it thus holds that
\begin{align*}
     0 = \phi(0) &\ge \phi(x) + \inpr{0-x,\nabla\phi(x)}, 
\end{align*}
and therefore also
\[
    \phi(x) \le \langle x, \nabla\phi(x)\rangle \le \|x\|.
\]

We may now show that $\mu(A_t^-) < \infty$.  
Since $\mu(\Rdmz \sauf S)<\infty$ by~\cref{eq:good_origin_neighbourhood_2}, it suffices to check that $\mu(S\cap A_t^-)<\infty$.  But from the above display and the definition of $A_t^-$ [if $x\in A_t^-$, then $\phi(x) > \psi(x) + |t| \ge |t| > 0$], we have the inclusions
\begin{align*}
    S \cap A_t^-
    &\subset \cbr{ x \in \dom \phi \mid \phi(x) > |t| \text{ and } \phi(x) \le \nbr{x} } \\
    &\subset \{x \in \Rd \mid \nbr{x} \ge |t| \} = \ball(0,|t|)^c. 
\end{align*}
Hence, $0\notin \cl (S \cap A_t^- )$ and thus $\mu( S \cap  A_t^-) <\infty$, as required. 

\medskip{}

\noindent\emph{Step 5.} --- 
Recall $V$ in \eqref{eq:domnablas}. We claim that there exists $z \in V$ with the following three properties:
\begin{itemize}
	\item $t := \psi(z) - \phi(z) < 0$;
	\item $\nabla{\psi}(z) \ne \nabla{\phi}(z)$;
	\item for all $\delta > 0$,
	\begin{equation} 
	\label{eq:mu_pos}
	\mu \rbr{ A_{t}^{-} \cap  \ball(z, \delta) } > 0.
	\end{equation}
\end{itemize}

To prove the claim, define, for every $t < 0$, the sets
\begin{align*}
	\Omega_{t}
	&:=
	A_{t}^{0}\cap W \cap\supp\mu \\
	&\phantom{:}=
	\{ x \in V \mid \psi(x) - \phi(x) = t \text{ and } \nabla\psi(x) \ne \nabla\phi(x) \} \cap \supp \mu, \\
	\Xi_{t}
	&:=
	\{ x \in \Omega_{t} \mid \exists \delta > 0, \mu ( A_{t}^{-} \cap \ball(x, \delta) ) = 0 \}.
\end{align*}
We need to show that there exists $t < 0$ such that $\Omega_{t} \sauf \Xi_{t} \neq \emptyset$. To this end, define
\begin{align*}
	\mathcal{T}_{\Omega} &:= \{ t \in (-\infty, 0) \mid \Omega_{t} \neq \emptyset \}, \\
	\mathcal{T}_{\Xi} &:= \{ t \in (-\infty, 0) \mid \Xi_{t} \neq \emptyset \}.
\end{align*}
We will show that $\mathcal{T}_{\Omega}$ is uncountable while $\mathcal{T}_{\Xi}$ is at most countable, so that there must exist (uncountable many) $t < 0$ such that $\Omega_{t}$ is nonempty and $\Xi_{t}$ is empty.

By Lemma~\ref{lem:smallset}, 
for every $t < 0$, the set $\Omega_{t}$ has Hausdorff measure at most $d-1$ and is therefore a $\mu$-null set. Still, their union over all $t$ less than $0$ is
\[
	\textstyle\bigcup_{t < 0} \Omega_t
	=
	\textstyle\bigcup_{t < 0} A_{t}^{0} \cap W \cap \supp \mu
	=
	A_{0}^{-} \cap W \cap \supp \mu,
\]
a set receiving positive mass from $\mu$ by \eqref{eq:wrongassumption3}. Therefore, $\mathcal{T}_\Omega$ must be uncountable.

We have $\Xi_{t} = \bigcup_{n \in \NN} \Xi_{t,n}$ and thus $\mathcal{T}_{\Xi} = \bigcup_{n \in \NN} \mathcal{T}_{n}$, where, for $n \in \NN$,
\begin{align*}
	\Xi_{t,n}
	&:=
	\{ x \in \Omega_{t} \mid \mu ( A_{t}^{-} \cap \ball(x, 2/n) ) = 0 \}, \\
	\mathcal{T}_{n} 
	&:= \{ t \in (-\infty, 0)  \mid \Xi_{t,n} \neq \emptyset \}.
\end{align*}
To show that $\mathcal{T}_{\Xi}$ is countable, it is sufficient to show that each $\mathcal{T}_{n}$ is countable.

Define
\[
\forall t \in \mathcal{T}_{n}, \qquad
G_t := {\textstyle\bigcup}_{x \in \Xi_{t,n}} \ball(x, 1/n).
\]
Each $G_t$ is non-empty and open, and by the argument just given, $G_s \cap G_t = \emptyset$ whenever $s, t \in \mathcal{T}_{n}$ with $s < t$. Since $\Rd$ is separable, the cardinality of the collection of sets $\cbr{G_t}_{t \in \mathcal{T}_n}$ is at most countable, 
and therefore $\mathcal{T}_n$ itself is at most countable, as required. The claim made at the beginning of this step is proved.

\medskip{}

\emph{Step 6.} --- 
Let $t<0$,  and $z\in V$ from Step 5, such that $t = \psi(z) - \phi(z) < 0$.
Define 
\begin{align}
    M  &= \{ y \in \Rd \mid \psi(y) - \phi(y) < t \}  = \{ y \in \Rd \mid \psi(y) - \psi(z) < \phi(y) - \psi(z) \}, 
    \label{eq:MM_clean}
\end{align}
so that $A_t^- = M\cap \mathcal{D}$.
By Step~4, and since $\tilde\mu(\mathcal{D}^c) = 0$,  we have  $ \mu(M) = \mu(A_t^-)  <\infty$. 

If $x\in A_t^-$, it holds that 
$$
\phi(x) > \psi(x) + |t| \ge |t| >0, 
$$
where the second inequality above follows from $0\in\partial \psi(0)$ and $\psi(0)=0$; hence $x$ is not a minimum of $\phi$ and thus $0\notin \partial \phi(x)$. Hence, with  $B= \partial \phi(A_t^-)$, we obtain 
\begin{equation}
    \label{eq:0notinB}
    0 \notin B. 
\end{equation}

The set $B = \proj_2((A_t^- \times \Rd) \cap \partial \phi)$ is $\sigma$-compact and thus Borel measurable. Indeed, since $\proj_2$ is continuous, the set $B$ is $\sigma$-compact as soon as $(A_t^- \times \Rd) \cap \partial \phi$ is $\sigma$-compact. But the latter property holds because $A_t^-$ is open (since $\phi$ and $\psi$ are continuous on the interiors of their domains) and thus $\sigma$-compact, while $\partial \phi$ is closed (since $\phi$ is lower semicontinuous) and thus also $\sigma$-compact;\footnote{If $U \subset \R^d$ is open, then $U = \bigcup_{n\ge 1} K_n$ with compact $K_n = \cbr{x \in \Rd \mid \nbr{x} \le n, \ball(x,1/n) \subset U}$. If $F \subset \Rd$ is closed, then $F = \bigcup_{n \ge 1} K_n$ with compact $K_n = \cbr{x \in F \mid \nbr{x} \le n}$.} the intersection of two $\sigma$-compact sets is again $\sigma$-compact. 

From the assumption that $\gamma_\psi$ and $\gamma_\phi$ are zero-couplings for $(\mu,\nu)$,  \cref{thm:expr_zcmcoup} and $0\notin B$, we get 
\begin{equation}
    \label{eq:identity_nuB}
    \begin{aligned}
    \nu(B) & = \nabla\psi_\#\mu(B) + \gamma_\psi(\{0 \}\times B ) \\
    & = \nabla\phi_\#\mu(B) + \gamma_\phi(\{0 \}\times B ). 
    \end{aligned}
\end{equation}
We now show that 
\begin{equation}
    \label{eq:gammapsi_0_B_zero}
    \gamma_\psi(\{0 \}\times B ) = 0.
\end{equation}
It is enough to show that $(\{0\}\times B) \cap \partial \psi = \varnothing$ or in other words, that if $(x,y)\in\partial \psi$ is such that $y\in B$, then necessarily $x\neq 0$. Thus, let $(x,y)\in \partial \psi$ be such that $y\in B$. Then we have
$$x\in (\partial\psi)^{-1}(y) \subset (\partial\psi)^{-1}(B) =
(\partial \psi)^{-1} (\partial\phi(A_t^-))  \subset (\partial \psi)^{-1} (\partial\phi(M)). $$ 
Now \cref{lem:aleksandrov-extended} applied at $p=z$ with $M$ defined in \cref{eq:MM_clean}
implies $(\partial \psi)^{-1}(\partial\phi(M)) \subset M$,  
so that $x\in M$.  From the definition of $M$ in \cref{eq:MM_clean} and $t\neq 0$, we have   $0\notin M$,  and  
we obtain $x\neq 0$, 
proving \cref{eq:gammapsi_0_B_zero}.

Recall from Step~1 that we have $\mu(M \sauf A_t^-) \le \mu(\Rdmz \sauf \mcal{D}) = 0$. From~\cref{eq:gammapsi_0_B_zero} we therefore get 
\begin{align}
  \nu(B)
  & = (\nabla\psi)_\#\mu(B) && \nonumber \\
  & = \mu[ (\nabla\psi)^{-1}(\partial \phi(A_t^-))] && \text{(as $B = \partial\phi(A_t^-)$ by definition)} \nonumber \\
  &\le \mu[ M \setminus \ball(z,\rho)] && \text{(by Lemma~\ref{lem:aleksandrov-extended})} \nonumber \\
  &= \mu[ A_t^- \setminus \ball(z,\rho)] && \text{(since $\mu(M \sauf A_t^-) = 0$)} \nonumber \\
  & < \mu(A_t^-) <\infty, && \text{(by Steps~4 and 5).} 
                               \label{eq:nuB_is_small_2}
\end{align}
However,  it also holds that 
\begin{align}
    \nu(B) 
    &\ge (\nabla\phi)_\#\mu(B) \nonumber \\
    &= \mu [(\nabla\phi)^{-1}(\partial\phi(A_t^-))] \nonumber \\
    &= \mu (A_t^{-} \cap \dom \nabla\phi) \nonumber \\
    &\ge \mu(A_t^{-}).
    \label{eq:nu_B_is_large_2}
\end{align}
Together, \cref{eq:nuB_is_small_2,eq:nu_B_is_large_2} provide the desired contradiction. As a consequence, the inequality in \cref{eq:wrongassumption} must be false. The proof of the identities $\nabla\psi=\nabla\phi$ $\mu$-almost everywhere and $\gamma_\psi=\gamma_\phi$ is complete.

\medskip{}

\noindent\emph{Proof of the particular case}

\medskip{}

\noindent By \cref{thm:existence}, $\zcmcoup(\mu,\nu)$ is nonempty. Let $\gamma_1,\gamma_2\in\zcmcoup(\mu,\nu)$. Applying Rockafellar's Theorem~\ref{thm:rockafellar} we get two closed convex functions $\psi_1,\psi_2: \Rd \to \reals \cup \cbr{\infty}$ such that $\spt\gamma_i\subset\partial\psi_i$ for $i$ in $\{1,2\}$.
When at least one of $\mu$ or $\nu$ has infinite mass, Lemma~\ref{lem:Psi-finite} implies that the assumption $(0,0) \in \spt\gamma_1 \cap \spt\gamma_2$ is satisfied.
The conclusion of Step~7 above yields $\gamma_1=\gamma_2$, whence $\zcmcoup(\mu,\nu)$ is a singleton.
\end{proof}

\begin{proof}[Proof of \cref{thm:uniquerepr}]
    Since $\nu = \res((\nabla \psi_j)_{\#}\mu)$, $j=1,2$,  
    we must have $\mu(\Rdmz) \ge \nu(\Rdmz)$.
    If $\mu$ and thus $\nu$ are the zero measure, the result is trivial. 
    
    Suppose $\mu$ has finite, nonzero mass. We consider their extensions $\tilde\mu \in [\mu]$ and $\bar\nu\in[\nu]$ to $\Rd$ with $\tilde{\mu}(0) = 0$ and $\bar\nu(\{0\})=\mu(\Rdmz)-\nu(\Rdmz)$. 
The measures $\bar \nu$ and $\tilde \mu$ verify
\[
    \bar \nu(\Rd) = \bar\nu(\{0\}) + \nu(\Rdmz) = \mu(\Rdmz)= \tilde\mu(\Rd) :=m <\infty.  
\]
Thus $\nu_1 = \bar\nu / m$ and $\mu_1 = \tilde \mu / m$ are probability measures. We check that for $\psi\in\{\psi_1,\psi_2\}$, the gradient $\nabla \psi$ pushes $\mu_1$ forward to $\nu_1$ in the traditional sense. This follows from the following identities: for any Borel set $B\subset\Rdmz$, we have
\[
    \nu_1(B) 
    = m^{-1} (\nabla\psi)_\#\mu(B) 
    = \mu_1\rbr{(\nabla\psi)^{-1}(B)}
    = (\nabla\psi)_\#\mu_1(B)
\]
while the equality $\mu(\Rdmz \sauf \dom \nabla\psi) = 0$ implies $(\nabla\psi)_\#\mu(\Rd) = \mu(\Rdmz \cap \dom \nabla\psi) = \mu(\Rdmz)$ and thus
\begin{align*}
    \nu_1(\{0\}) &= m^{-1} \rbr{\mu(\Rdmz) - \nu(\Rdmz)} \\
    &= m^{-1} \rbr{ (\nabla\psi)_\#\mu(\Rd) -  (\nabla\psi)_\#\mu(\Rdmz) } \\
    &= m^{-1} (\nabla\psi)_\#\mu(\{0\}) 
    = (\nabla\psi)_\#\mu_1(\{0\}). 
\end{align*}
Applying the uniqueness part of the Main Theorem in \citet{McCann1995} yields $\nabla\psi_1 = \nabla \psi_2$, $\mu_1$-almost everywhere, and thus also $\mu$-almost everywhere. 

Assume now that $\mu$ has infinite mass. 
By \cref{thm:unique}, it is sufficient to show that for $\psi\in\{\psi_1,\psi_2\}$, the measure $\gamma := \res\rbr{(\Id \times \nabla \psi)_{\#}\mu}$ is a zero-coupling of $\mu$ and $\nu$ and that $\spt \gamma \subset \partial \psi$, so that $\gamma \in \zcmcoup(\mu,\nu)$. In view of \cref{lem:Psi-finite}, we then necessarily have $(0, 0) \in \spt \gamma$ and thus $(0, 0) \in \partial \psi$, a condition in \cref{thm:unique}.
    
    First, $\gamma$ is a zero-coupling of $\mu$ and $\nu$, since for $A \in \borel(\Rdmz)$, we have
    \begin{align*}
        \gamma(A \times \Rd)
        &= \mu\rbr{A \cap (\nabla\psi)^{-1}(\Rd)}
        = \mu\rbr{A \cap \dom \nabla \psi}
        = \mu(A), \\
        \gamma(\Rd \times A)
        &= \mu\rbr{\Rdmz \cap  (\nabla\psi)^{-1}(A)}
        = \nu(A). 
    \end{align*}

Second, to prove that $\spt \gamma\subset \partial \psi$, we show the equivalent statement that $(x,y)\notin\partial \psi$ implies $(x,y)\notin \spt\gamma$. Let $(x,y)\notin\partial \psi$. Since $\partial \psi$ is closed, there exist open sets $V_x,V_y$ in $\Rd$ such that $x\in V_x$, $y\in V_y$ and $\rbr{ V_x\times V_y }\cap \partial \psi = \varnothing$. By definition, $\gamma(V_x \times V_y) = \tilde{\mu}\rbr{\cbr{\tilde x\in V_x \cap \dom \nabla\psi \mid \nabla\psi(\tilde x)\in V_y}}$. However, from $\partial\psi\cap \rbr{V_x\times V_y} = \emptyset$ we get
\begin{align*}
    \cbr{\tilde x\in V_x \cap \dom \nabla\psi \mid \nabla\psi(\tilde x)\in V_y} &
    \subset \cbr{\tilde x\in V_x \mid \partial\psi(\tilde x)\cap V_y \neq \emptyset} = \emptyset.  
\end{align*}
This shows that $\gamma(V_x\times V_y)=0$, whence $(x,y)\notin\spt \gamma$ as desired. 
\end{proof}

\begin{proof}[Proof of \cref{cor:BMcC}]
    By \cref{thm:existence}, the set $\zcmcoup(\mu,\nu)$ is not empty, and since at least one of the measures $\mu$ and $\nu$ has infinite mass, it is a singleton by \cref{thm:unique}. \cref{thm:expr_zcmcoup} then yields the desired    representations of the unique zero-coupling $\gamma$ and, in the proper case, of $\nu$.
\end{proof}

\section{Tails of transport plans between regularly varying distributions}
\label{sec:EVTOT}




In probability theory, regular variation underpins asymptotic theory for sample maxima and sums \cite{bingham1989regular, de2006extreme, resnick2007heavy, Resnick2008, Resnick2024} 
as it provides a coherent framework to describe the behaviour of functions at infinity and thus for the tails of random variables and vectors. A Borel measurable function $f$ defined in a neighbourhood of infinity and taking positive values is regularly varying with index $\tau \in \reals$ if $f(\lambda r)/f(r) \to \lambda^\tau$ as $r \to \infty$ for all $\lambda \in (0, \infty)$. A random variable $R$ is said to have a regularly varying upper tail with index $\alpha \in (0, \infty)$ if the function $r \mapsto \pr(R > r)$ is regularly varying with index $-\alpha$. The function $t \mapsto b(t) = Q(1 - 1/t)$, with $Q$ the quantile function of $R$, is then regularly varying at infinity with index $1/\alpha$, and we have $t \, \pr[R/b(t) > \lambda] \to \lambda^{-\alpha}$ as $t \to \infty$ for all $\lambda \in (0, \infty)$. The latter statement means that, in the space $\Mz(\reals)$, we have $t \, \pr[R/b(t) \in \point] \vto \nu_\alpha$ as $t \to \infty$ where $\nu_\alpha$ is concentrated on $(0, \infty)$ and is determined by $\nu_\alpha((\lambda,\infty)) = \lambda^{-\alpha}$ for all $\lambda \in (0, \infty)$.

More generally, a probability measure $P \in \Prob(\Rd)$ is regularly varying (see \citet[Definition~3.2, Theorem~3.1]{hult2006regular}, \citet[Definition~3.2, Theorem~3.1]{lindskog2014regular}, and \citet[Definition~2.1]{Resnick2024}) if there exists an increasing Borel measurable function $b$ defined in a neighbourhood of infinity and taking positive values such that
\begin{equation}
\label{eq:MRV_cont}
t \, P(b(t) \point) \vto \mu,
\qquad t \to \infty
\end{equation}
for some non-zero $\mu\in\Mz(\reals^{d})$, where $\lambda A = \cbr{\lambda x \mid x \in A}$ for a subset $A$ of Euclidean speace. The function $b$ must be regularly varying 
with index $1/\alpha \in (0, \infty)$, say, and the limit measure $\nu$ must be homogeneous:
\begin{equation}
    \label{eq:limit_homo}
	\mu(\lambda^{-1/\alpha} \point ) = \lambda \, \mu(\point),
	\qquad \lambda > 0.
\end{equation}
As a consequence, its support is a (closed) multiplicative cone, that is, $x \in \spt \mu$ and $\lambda \in [0, \infty)$ imply $\lambda x \in \spt \mu$. 
We call $\alpha \in (0, \infty)$ the index of regular variation of $P$.


Let $P, Q \in \Prob(\Rd)$ be regularly varying probability measures with auxiliary functions $b_1, b_2$ and limit measures $\mu,\nu\in\Mzp(\Rd)$, respectively.
By Theorem~6 of \citet{McCann1995} and by Rockafellar's Theorem~\ref{thm:rockafellar}, there exists a coupling measure $\pi\in\coup(P,Q)$ with cyclically monotone support contained in the graph of the subdifferential $\partial\psi$ of some closed convex function $\psi$ on $\reals^{d}$. 
By Theorem~\ref{thm:existence} and Rockafellar's Theorem~\ref{thm:rockafellar}, there exists a zero-coupling measure $\gamma\in\zcmcoup(\mu,\nu)$ with cyclically monotone support contained in the subdifferential $\partial\bpsi$ of some closed convex function $\bpsi$ on $\reals^{d}$.
We will connect the two subdifferentials $\partial \psi$ and $\partial \bpsi$ via the asymptotic behaviour of $\partial\psi(x)$ when $\norm{x}$ tends to infinity.
In view of our aim of providing notions for measures in $\Mzp(\Rd)$ that are similar to the center-outward distribution and quantile functions defined in \citet{hallin2021}, 
we believe this connection will be of interest in future work on tail quantile contours.

Write $1_d = (1, \ldots, 1) \in \Rd$ and let $\diag(a 1_d, b 1_d)$ be the $(2d) \times (2d)$ diagonal matrix with diagonal $(a, \ldots, a, b, \ldots, b)$, with $d$ repetitions of $a, b \in \reals$. 
For a diagonal matrix $A = \diag(a_1, \ldots, a_d)$ and a scalar $\lambda > 0$, we write $\lambda^A = \exp(A \log(\lambda)) = \diag(\lambda^{a_1}, \ldots, \lambda^{a_d})$ with $\exp(\point)$ the matrix exponential. The following theorem was summarized in the commutative diagram in \cref{fig:cd} and establishes the link between cyclically monotone couplings and transport plans between regularly varying probability measures on the one hand and those between their exponent measures on the other hand. Recall the Fell topology described in the beginning of \cref{sec:stability} and recall the notation $T \restr V = T \cap (V \times \Rd)$ for $T \subset \RdRd$ and $V \subset \Rd$. For the subdifferential $\partial\psi$ of a convex function $\psi$ and a scalar $b > 0$, we write $\partial\psi(b\point) = \cbr{(x,y) \mid y \in \partial\psi(bx)}$.

\begin{theorem}[Regular variation and monotone transport]
    \label{main_thm_2}
    Let $P$ and $Q$ in $\Prob(\Rd)$ be regularly varying probability measures with auxiliary functions $b_1$ and $b_2$, indices $\alpha_1 > 0$ and $\alpha_2 > 0$, and limit measures $\mu$ and $\nu$ in $\Mzp(\Rd)$, respectively. 
    Let $\pi \in \cmcoup(P,Q)$ with $\spt \pi \subset \partial \psi$ for some closed convex function $\psi : \Rd \to \reals \cup \cbr{\infty}$. 
    If we further assume that
    \begin{enumerate}[label=(\roman*)]
        \item $\mu$ vanishes on sets of Hausdorff dimension not larger than $d-1$,
        \item         $
        \forall y \in \spt\nu\sauf \cbr{0}:
        \exists x \in \spt\mu:
         \inpr{x,y}>0
        $,
    \end{enumerate}
    then we have the following properties:
    \begin{enumerate}[label=(\alph*)]
    \item
    $\zcmcoup(\mu,\nu)$ is a singleton with single element $\gamma= ( \Id \times \nabla\bpsi)_{\#}\mu$ for some closed convex function $\bpsi$ satisfying $\spt \gamma \subset \partial \bpsi$ and $\bpsi(0) = \inf_{x\in\Rd} \bpsi(x) < \infty$.
    Moreover, $\gamma$ is proper, so that $\gamma = \res \rbr{(\Id \times \nabla\bpsi)_{\#}\mu}$ and $\nu = \res \rbr{(\nabla\bpsi)_{\#}\mu}$.

    \item
    Writing $V = \intr \rbr{\spt \mu}$ and $B(t) = \diag(b_1(t) 1_d, b_2(t) 1_d)$ for $t > 0$, we have, as $t \to \infty$,
    \begin{align}
    \label{eq:RV:conv}
        t \, \res\pi(B(t) \point) &\Mto \gamma 
        &&\text{in $\Mzp(\RdRd)$}, \\
    \label{eq:RV:psi}
        (B(t)^{-1} \partial \psi) \llcorner V
        &\Fto \rbr{ \partial\bpsi } \llcorner V  
        && \text{in $\Fell(V\times\Rd)$}.
    \end{align}

    \item
    The measure $\gamma$ is homogeneous in the sense that, writing $E = \diag(\alpha_{1}^{-1} 1_d, \alpha_{2}^{-1} 1_d)$, for all $\lambda > 0$, we have
    \begin{align}
        \label{eq:pibarhomo}
        \gamma(\lambda^{-E}\point) &= \lambda \gamma,
        \\
        \label{eq:spt_homog}
        \lambda^E \spt \gamma &= \spt \gamma.
    \end{align}

    \item
    The gradient $\nabla\bpsi$ is defined and determined uniquely $\mu$-almost everywhere. We have
	\begin{equation}
		\label{eq:homog:1}
    	\nabla\bpsi(\lambda x) = \lambda^{\alpha_1/\alpha_2} \,\nabla\bpsi(x), 
        \qquad \lambda > 0,
    \end{equation}
	for $\mu$-almost every $x \in \Rd$. Moreover, the function $\bpsi$ may be modified such that $\bpsi(0)= 0$; then it is determined uniquely $\mu$--almost everywhere and satisfies
	\begin{equation}
         \label{eq:bpsi:homog}
    	\bpsi(\lambda x) = \lambda^{\alpha_1/\alpha_2+1} \, \bpsi(x),
    	\qquad \lambda \ge 0,
    \end{equation}
	for $\mu$-almost every $x \in \Rd$.
    \end{enumerate}    
\end{theorem}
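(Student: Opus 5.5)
The plan is to prove (a) from the Brenier--McCann machinery, then obtain (b) by applying the stability results to rescaled couplings, and finally derive (c) and (d) from uniqueness combined with the scaling invariance of the problem.

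\emph{Part (a).} Since $\mu$ is a nonzero limit measure satisfying \cref{eq:limit_homo}, it has infinite mass. Hence \cref{cor:BMcC}, using assumption (i), shows that $\zcmcoup(\mu,\nu)$ is a singleton $\{\gamma\}$. By Rockafellar's Theorem~\ref{thm:rockafellar} there is a closed convex $\bpsi$ with $\spt\gamma\subset\partial\bpsi$. \cref{lem:Psi-finite} gives $(0,0)\in\spt\gamma\subset\partial\bpsi$, so $\bpsi$ attains its minimum, which is finite, at $0$. To show that $\gamma$ is proper, I would invoke \cref{cor:cdtn-proper-zcoup}(ii): by \cref{eq:limit_homo}, $\mu(\lambda\point)=\lambda^{-\alpha_1}\mu(\point)$, and assumption (ii) is exactly \cref{necessary_condition}. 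The representations $\gamma=\res((\Id\times\nabla\bpsi)_\#\mu)$ and $\nu=\res((\nabla\bpsi)_\#\mu)$ then follow from the proper representation \cref{eq:proper} in \cref{thm:expr_zcmcoup}.

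\emph{Part (b).} Set $\mu_t=tP(b_1(t)\point)$ restricted to $\Rdmz$, $\nu_t=tQ(b_2(t)\point)$ restricted likewise, and $\gamma_t=t\,\res\pi(B(t)\point)$. Because $\pi$ is a coupling of $P$ and $Q$, $\gamma_t$ is a zero-coupling of $\mu_t$ and $\nu_t$. The only point needing care is that mass of $\pi$ on $\{0\}\times\Rd$ is simply counted in $\{0\}\times\Rdmz$. Its support $B(t)^{-1}\spt\pi$ lies in $T_t:=B(t)^{-1}\partial\psi$. This set equals $\partial\psi_t$ with $\psi_t(x)=\psi(b_1(t)x)/(b_1(t)b_2(t))$, so it is maximal cyclically monotone, and $\spt\gamma_t$ is cyclically monotone. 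Applying \cref{cv:cm_coupling} along an arbitrary sequence $t_n\to\infty$, with the uniqueness from (a) and $T=\partial\bpsi\in\Fellmcm$, yields \cref{eq:RV:conv,eq:RV:psi} along every sequence, and hence as $t\to\infty$.

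\emph{Parts (c) and (d).} For $\lambda>0$, set $\gamma^\lambda=\lambda^{-1}\gamma(\lambda^{-E}\point)$. Its zero-marginals are $\lambda^{-1}\mu(\lambda^{-1/\alpha_1}\point)=\mu$ and $\nu$ by \cref{eq:limit_homo}. Its support $\lambda^{E}\spt\gamma$ is cyclically monotone, because multiplying the $x$-block by a positive scalar and the $y$-block by another positive scalar preserves \cref{rel:cyclical_mononicity0}. Uniqueness in (a) then gives $\gamma^\lambda=\gamma$, which is \cref{eq:pibarhomo}, and taking supports gives \cref{eq:spt_homog}. Alternatively, these identities follow from (b) via regular variation of $b_1$ and $b_2$. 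For (d), define $\bpsi_\lambda(x)=\lambda^{-1/\alpha_1-1/\alpha_2}\bpsi(\lambda^{1/\alpha_1}x)$. It is closed convex, satisfies $\partial\bpsi_\lambda=\lambda^{-E}\partial\bpsi\supset\spt\gamma$, and contains $(0,0)$ in its subdifferential. \cref{thm:unique} then gives $\nabla\bpsi_\lambda=\nabla\bpsi$ $\mu$-a.e., which after reparametrising $\lambda$ is \cref{eq:homog:1}. Normalising $\bpsi(0)=0$ and integrating the gradient along rays $s\mapsto sx$ gives \cref{eq:bpsi:homog}. The exponent is $\alpha_1/\alpha_2+1$, as the $\lambda$-dependence in \cref{eq:homog:1} requires.

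\emph{Main obstacle.} The delicate step is passing from ``for each $\lambda$, equality holds $\mu$-a.e.'' to a single full-measure set valid for all $\lambda$. I would first restrict to rational $\lambda$. Then I would use continuity of $\nabla\bpsi$ on $\dom\nabla\bpsi\cap\intr\dom\bpsi$, together with the fact that $\mu$-a.e.\ ray $\{sx:s>0\}$ meets the differentiability set in a set of full one-dimensional measure (Fubini in polar coordinates, using homogeneity of $\mu$). The uniqueness of $\bpsi$ itself, up to $\bpsi(0)=0$, on $\spt\mu$ follows by the same ray integration, since $\intr\spt\mu$ is a cone.
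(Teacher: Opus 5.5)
Your arguments for (a)--(c) are correct, and so is (d) in the per-$\lambda$ form: for each fixed $\lambda>0$, $\nabla\bpsi(\lambda x)=\lambda^{\alpha_1/\alpha_2}\nabla\bpsi(x)$ for $\mu$-a.e.\ $x$. In places your route is more intrinsic than the paper's. You get properness directly from \cref{cor:cdtn-proper-zcoup}(ii) and the homogeneity of $\mu$. You get \cref{eq:pibarhomo} from uniqueness applied to $\lambda^{-1}\gamma(\lambda^{-E}\point)$. The paper instead derives homogeneity of $\gamma$ from regular variation of $b_1,b_2$ (\cref{lem:tail_coup}) and then obtains properness via \cref{cor:cdtn_push_forward}.

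\emph{The gap.} It lies in the step you single out, and the paper does prove the strong form there: a single $\mu$-null set outside of which \cref{eq:homog:1} holds for \emph{every} $\lambda>0$, in particular with $\lambda x\in\dom\nabla\bpsi$ for all $\lambda$.
\begin{itemize}
\item Rational $\lambda$ plus continuity of $\nabla\bpsi$ on $\dom\nabla\bpsi$ gives the identity only at those $\lambda$ for which $\lambda x$ already lies in $\dom\nabla\bpsi$.
\item Fubini along rays guarantees this only for Lebesgue-a.e.\ $\lambda$.
\item Continuity of the gradient on its domain cannot create differentiability at the remaining points.
\item Knowing that $s\mapsto\bpsi(sx)$ is $C^1$ says nothing about differentiability of $\bpsi$ at $\lambda x$ in transverse directions.
\end{itemize}
So your sketch reaches ``for $\mu$-a.e.\ $x$ and a.e.\ $\lambda$''. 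That suffices for \cref{eq:bpsi:homog} by ray integration and for $\mu$-a.e.\ uniqueness of $\bpsi$, but not for the simultaneous statement.

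\emph{The missing idea.} Use the exact, not merely almost-everywhere, homogeneity \cref{eq:spt_homog} of $T=\spt\gamma$, which you have already proved.
\begin{itemize}
\item $D=\proj_1(T)$ is a genuine cone with $\mu(\Rdmz\sauf D)=0$, and $(x,y)\in T$ implies $(\lambda x,\lambda^{\alpha_1/\alpha_2}y)\in T$.
\item Integrating along rays in $D$ gives $\bpsi(\lambda x)=\lambda^{\alpha_1/\alpha_2+1}\bpsi(x)$ for all $x\in D$ and $\lambda\ge0$.
\item The paper then passes to $\phi$, the closure of the convex hull of the restriction of $\bpsi$ to $D$ (\cref{lem:clconvhull}). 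Part (c) of that lemma gives $\partial\phi(\lambda x)=\lambda^{\alpha_1/\alpha_2}\partial\phi(x)$ exactly for $x\in D$, so $D\cap\dom\nabla\phi$ is a cone.
\item This cone is $\mu$-full by \cref{thm:expr_zcmcoup}, since $\spt\gamma\subset\partial\phi$.
\item By part (e) of \cref{lem:clconvhull}, the cone is contained in $\dom\nabla\bpsi$, with $\nabla\bpsi=\nabla\phi$ there.
\end{itemize}
Hence every point of this $\mu$-full cone has its whole ray inside $\dom\nabla\bpsi$, and \cref{eq:homog:1} holds there for all $\lambda>0$ simultaneously.
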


In \cref{eq:RV:psi}, the set $B(t)^{-1} \partial\psi$ is the subdifferential of the convex function 
\begin{equation}
    \label{eq:psib1tb2t} 
    \psi_{b_1(t),b_2(t)}(x) = (b_1(t) b_2(t))^{-1} \psi(b_1(t)x), 
    \qquad x \in \Rd,
\end{equation}
the subdifferential of which satisfies
\[
    \partial\psi_{b_1(t),b_2(t)}(x) = b_2(t)^{-1} \partial \psi(b_1(t)x), 
    \qquad x \in \Rd,
\]
by \cref{lem:scaling} below. We can therefore view \cref{eq:RV:psi} as the convergence of a rescaled version of the subdifferential of $\psi$, seen as a multivalued map.

If $b_1 = b_2$ and thus $\alpha_1 = \alpha_2$, then the coupling distribution $\pi$ is regularly varying with the same auxiliary function and index, and with limit measure $\gamma$. In that case, the limit measure of the cyclically monotone coupling $\pi$ between $P$ and $Q$ is equal to the cyclically monotone coupling $\gamma$ between their limit measures $\mu$ and $\nu$. This is the gist of the commutative diagram in \cref{fig:cd}. If the two auxiliary functions $b_1$ and $b_2$ of $P$ and $Q$ cannot be taken to be the same, for instance because the indices $\alpha_1$ and $\alpha_2$ are different, then the coupling distribution $\pi$ is not regularly varying in the sense of \cref{eq:MRV_cont}, but we can still think of the convergence relation in \cref{eq:RV:conv} as a kind of extended form of regular variation.

\begin{remark}
    The assumptions guaranteeing the cyclically monotone coupling $\gamma$ to be proper are satisfied as soon as the reference limit measure $\mu$ is smooth enough (absolute continuity suffices) and has large enough support. As a consequence, they come at no cost for practical use as one can easily define a reference distribution satisfying them. For example, the reference distribution may be chosen to have a polar decomposition made up of the uniform distribution on the unit sphere for the angular component and a Fréchet distribution for the radial part.
    If the interest is in maxima only, then we may restrain the angular component to the positive orthant.
\end{remark}

In Theorem~\ref{main_thm_2}, the uniqueness in~(a) follows from the smoothness of $\mu$ in~(i) and Theorem~\ref{thm:unique}. The fact that $\gamma$ is proper and thus $\gamma$ and $\nu$ can be written as push-forwards of $\mu$ follows from~(ii) and Proposition~\ref{cor:cdtn_push_forward} together with homogeneity properties which are the hallmark of regular variation. The same homogeneity properties also explain (c) and (d). Finally, the stability in (b) can be linked back to Theorem~\ref{cor:stability}.

Since the functions $b_1$ and $b_2$ diverge at infinity, the Fell convergence in \cref{eq:RV:psi} implies that the tail of $\partial \psi$ is asymptotically homogeneous and converges to $\partial \bpsi$. Under additional assumptions, this Fell convergence implies locally uniform convergence if we think of the subdifferentials as multi-valued mappings \citep[Theorem~5.44]{rockafellarwets98}.

The limit measure $\nu$ captures the multivariate tail of $Q$ via $Q(A) \approx t^{-1} \nu(b_2(t)^{-1}(A)) = t^{-1} b_2(t)^\alpha \nu(A)$ for Borel sets $A$ far away from the origin. If $\mu$ is chosen to be spherically symmetric, then quantile or depth contours, however defined, are just spheres, and transporting them via $\nabla\bpsi$ gives the corresponding contours for the target measure $\nu$ and thus for the tail quantile contours of $Q$; the contours are all homothetic to a fixed one by the homogeneity of $\nabla\bpsi$. This provides a basis for the study of tail versions of the center-outward quantiles in \citep{Beirlant2020,Hallin2022,hallin2021}.


\subsection*{Proofs}


\begin{lemma}[Scaling]
	\label{lem:scaling}
	Let $P,Q \in \Prob(\Rd)$ and let $\pi \in \coup(P,Q)$ have support contained in the subdifferential $\partial \psi$ of some closed convex function $\psi$. For scalar $b_1 > 0$ and  $b_2 > 0$, consider the $2d \times 2d$ matrix $B=  \diag(b_{1} 1_d,b_{2} 1_d)$ and the measures $P_{b_1} = P(b_1 \point)$, $Q_{b_2} = Q(b_2 \point)$ and $\pi_{b_1,b_2} = \pi(B \point)$. 
	Then $\pi_{b_1,b_2} \in \coup(P_{b_1}, Q_{b_2})$ and its support $\spt \pi_{b_1,b_2} = B^{-1} \spt \pi$ is contained in the subdifferential of the closed convex function $\psi_{b_1,b_2} = (b_1 b_2)^{-1} \psi(b_1 \point)$, the subdifferential of which is $\partial \psi_{b_1,b_2}(x) = b_2^{-1} \partial \psi(b_1 x)$ for all $x \in \Rd$, or $\partial \psi_{b_1, b_2} = B^{-1} \partial \psi$ as subsets of $\RdRd$.
\end{lemma}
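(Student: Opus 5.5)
The lemma splits into three independent verifications: the coupling property of $\pi_{b_1,b_2}$, the identity for its support, and the convexity and subdifferential of $\psi_{b_1,b_2}$. Combining the last two gives the inclusion of supports.

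\emph{Coupling.} For $A \in \borel(\Rd)$ we have $B(A \times \Rd) = b_1 A \times \Rd$. Hence
\[
\pi_{b_1,b_2}(A \times \Rd) = \pi(b_1 A \times \Rd) = P(b_1 A) = P_{b_1}(A),
\]
and symmetrically for the second margin. Total mass is preserved because $B$ is a linear bijection, so $\pi_{b_1,b_2} \in \coup(P_{b_1},Q_{b_2})$. Note also that $\pi_{b_1,b_2} = (B^{-1})_{\#}\pi$.

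\emph{Support.} A point $z$ lies in $\spt \pi_{b_1,b_2}$ iff every open $U \ni z$ satisfies $\pi(BU) > 0$. Since $B$ is a homeomorphism of $\RdRd$, the sets $BU$ range exactly over the open neighbourhoods of $Bz$. Therefore $z \in \spt\pi_{b_1,b_2}$ iff $Bz \in \spt\pi$, i.e.\ $\spt \pi_{b_1,b_2} = B^{-1}\spt\pi$.

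\emph{Convex function.} The function $\psi_{b_1,b_2} = (b_1b_2)^{-1}\psi(b_1\point)$ is the composition of $\psi$ with a linear bijection, multiplied by a positive constant. It is therefore convex, and closed (lower semicontinuous) whenever $\psi$ is, with $\dom\psi_{b_1,b_2} = b_1^{-1}\dom\psi$. For the subdifferential I check the defining inequality \eqref{eq:subdiff} directly. The statement $v \in \partial\psi_{b_1,b_2}(x)$ means: for all $y$,
\[
(b_1b_2)^{-1}\psi(b_1y) \ge (b_1b_2)^{-1}\psi(b_1x) + \inpr{y-x,v}.
\]
Multiplying by $b_1b_2$ and substituting $y' = b_1 y$ (which ranges over all of $\Rd$), this becomes
\[
\psi(y') \ge \psi(b_1x) + \inpr{y'-b_1x, b_2 v} \quad \text{for all } y'.
\]
That is exactly $b_2 v \in \partial\psi(b_1x)$. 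Hence $\partial\psi_{b_1,b_2}(x) = b_2^{-1}\partial\psi(b_1x)$. In terms of graphs, $(x,v) \in \partial\psi_{b_1,b_2}$ iff $(b_1x, b_2v) = B(x,v) \in \partial\psi$, i.e.\ $\partial\psi_{b_1,b_2} = B^{-1}\partial\psi$.

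Since $\spt\pi \subset \partial\psi$, we conclude $\spt\pi_{b_1,b_2} = B^{-1}\spt\pi \subset B^{-1}\partial\psi = \partial\psi_{b_1,b_2}$.

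The whole argument is routine. The only point needing care is tracking the two distinct scalings $b_1$ and $b_2$ in the subgradient inequality, and noting that the map $y \mapsto b_1 y$ is onto $\Rd$, so the quantifier over $y$ transfers intact.
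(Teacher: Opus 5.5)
Your proposal is correct and follows essentially the same route as the paper: the key step in both is rewriting the subgradient inequality for $\psi_{b_1,b_2}$ via the substitution $s = b_1 z$ to get $(x,v) \in \partial\psi_{b_1,b_2} \iff (b_1 x, b_2 v) \in \partial\psi$, which yields $\partial\psi_{b_1,b_2} = B^{-1}\partial\psi$. You additionally verify the coupling property and the identity $\spt\pi_{b_1,b_2} = B^{-1}\spt\pi$ (via $B$ being a homeomorphism); the paper takes both of these as evident.
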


\begin{proof}
	The function $\psi_{b_1,b_2}$ is closed and convex since this is true for $\psi$. 
	We have $(x, y) \in \partial \psi_{b_1, b_2}$ if and only if
	\[
		\forall z \in \Rd, \qquad
		(b_1b_2)^{-1} \psi(b_1 z)
		\ge
		(b_1b_2)^{-1} \psi(b_1 x)
		+
		\inpr{z-x, y},
	\]
	which is equivalent to
	\[
		\forall s \in \Rd, \qquad
		\psi(s) \ge \psi(b_1 x) + \inpr{s - b_1x, b_2y}
	\]
	and thus to $(b_1x, b_2y) \in \partial \psi$. We find $\partial \psi_{b_1,b_2} = B^{-1} \partial \psi$ and $\partial \psi_{b_1,b_2}(x) = b_2^{-1} \partial \psi(b_1x)$ for $x \in \Rd$.
	Clearly, $\spt \pi_{b_1,b_2} = B^{-1} \spt \pi \subset B^{-1} \partial \psi = \partial \psi_{b_1,b_2}$.
\end{proof}

\begin{lemma}
    \label{lem:tail_coup}
    Let $P$ and $Q$ in $\Prob(\Rd)$ be regularly varying with auxiliary functions $b_1$ and $b_2$, indices $\alpha_1 > 0$ and $\alpha_2 > 0$, and limit measures $\mu$ and $\nu$ in $\Mzp(\Rd)$, respectively. Let $\psi$ be a closed convex function such $\partial\psi$ contains the cyclically monotone support of some $\pi\in\cmcoup(P,Q)$. Every sequence of positive numbers $t_n \to \infty$ then contains a subsequence such that, as $n \to \infty$ along the subsequence, we have writing $B(t) = \diag(b_1(t) 1_d, b_2(t) 1_d)$,
    \begin{align}
    \label{eq:RV:conv:n}
        t_n \res\pi(B(t_n) \point) 
        &\Mto \gamma 
        &&\text{in $\Mzp(\RdRd)$}, \\
        \notag
        B(t_n)^{-1} \partial \psi
        &\Fto \partial\bpsi 
        &&\text{in $\Fell(\RdRd)$}, 
    \end{align}
	for some $\gamma \in \zcmcoup(\mu, \nu)$ and some closed convex function $\bpsi$ satisfying $\spt \gamma \subset \partial \bpsi$ and $\bpsi(0) = \inf_{x\in\Rd} \bpsi(x) < \infty$.
    If $\zcmcoup(\mu, \nu)$ is a singleton, say $\cbr{\gamma}$, then we actually have $t \res \pi(B(t) \point) \Mto \gamma$ as $t\to\infty$, and, with $E = \diag(\alpha_{1}^{-1} 1_d, \alpha_{2}^{-1} 1_d)$, the measure $\gamma$ satisfies
    \begin{equation}
    \label{eq:gammahomo}
        \forall \lambda > 0, \qquad \gamma(\lambda^{-E}\point) = \lambda \gamma(\point);
    \end{equation}
    in particular, its support satisfies $\lambda^E \spt \gamma = \spt \gamma$ for all $\lambda > 0$.
\end{lemma}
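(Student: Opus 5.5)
We will reduce \cref{lem:tail_coup} to the stability results of \cref{sec:stability}, applied to the rescaled couplings furnished by \cref{lem:scaling}. For $t>0$ set $\mu_t = t\,\res P(b_1(t)\point)$, $\nu_t = t\,\res Q(b_2(t)\point)$ and $\gamma_t = t\,\res\pi(B(t)\point)$. By \cref{lem:scaling}, $t\,\pi(B(t)\point)$ is a true coupling of $tP(b_1(t)\point)$ and $tQ(b_2(t)\point)$, with support in $\partial\psi_{b_1(t),b_2(t)} = B(t)^{-1}\partial\psi$, which is a maximal cyclically monotone set by Rockafellar's theorem. Restricting to $\RdRdmz$ gives $\varphi_1\gamma_t = \mu_t$ and $\varphi_2\gamma_t=\nu_t$: for $A\in\borel(\Rdmz)$ the sets $A\times\Rd$ and $\Rd\times A$ avoid $(0,0)$. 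Moreover, $\spt\gamma_t\subset\spt \pi(B(t)\point)\subset B(t)^{-1}\partial\psi$ is cyclically monotone. Hence $\gamma_t\in\zcmcoup(\mu_t,\nu_t)$ with $T_t := B(t)^{-1}\partial\psi\in\Fellmcm$. By regular variation, $\mu_{t_n}\vto\mu$ and $\nu_{t_n}\vto\nu$.

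Applying \cref{cor:stability} along $t_n$ yields a subsequence with $\gamma_{t_n}\vto\gamma\in\zcmcoup(\mu,\nu)$ and $T_{t_n}\Fto T\in\Fellmcm$ with $\spt\gamma\subset T$. By Rockafellar's theorem, $T=\partial\bpsi$ for a closed convex $\bpsi$, unique up to an additive constant. For the minimum at the origin, the main point is that $(0,0)\in T$. If $\mu$ or $\nu$ has infinite mass, this follows from \cref{lem:Psi-finite}, since then $(0,0)\in\spt\gamma\subset T$. In the regularly varying setting $\mu$ is nonzero and homogeneous, so $\mu(\Rdmz)=\lambda\mu(\Rdmz)$ for all $\lambda>0$ and hence $\mu$ has infinite mass. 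Thus $0\in\partial\bpsi(0)$, so $\bpsi(0)=\inf\bpsi<\infty$.

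When $\zcmcoup(\mu,\nu)=\{\gamma\}$, every sequence $t_n\to\infty$ has a subsequence along which $\gamma_{t_n}\vto\gamma$. Since $\Mz$ is metrizable (Polish), this gives convergence along the whole family $t\to\infty$. For homogeneity, fix $\lambda>0$. Regular variation of $b_i$ with index $1/\alpha_i$ gives $b_i(\lambda t)/b_i(t)\to\lambda^{1/\alpha_i}$, that is, $B(\lambda t)B(t)^{-1}\to\lambda^{E}$. Then
\[
\lambda t\,\res\pi(B(\lambda t)\point)\vto\lambda\gamma\cdot\text{(limit)}
\]
is handled as follows. On one hand the left side converges to $\gamma$. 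On the other hand it equals $\lambda\,\gamma_t(B(t)^{-1}B(\lambda t)\point)$, which converges to $\lambda\gamma(\lambda^{E}\point)$. This second convergence is the step that needs care: it is a continuous-mapping argument in $\Mz$ under linear maps that converge to an invertible limit. It can be justified by testing against $f\in\Cbz$ and using uniform continuity of $f\circ$ along the converging matrices, with boundedness of the supports away from the origin controlled uniformly in $t$, or else via the Portmanteau theorem of \cref{section:Mzp} on $\gamma$-continuity sets. The two limits give $\gamma = \lambda\gamma(\lambda^{E}\point)$, which is equivalent to $\gamma(\lambda^{-E}\point)=\lambda\gamma$ after replacing $\lambda$ by $\lambda^{-1}$. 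Finally, $\lambda^E$ is a homeomorphism and $\gamma(\lambda^{-E}\point)$ has the same null sets as $\gamma$, so $\spt\gamma$ is invariant under $\lambda^{E}$.

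I expect the main obstacle to be the rescaling continuity step. This convergence is uniform on sets bounded away from zero because the matrices converge to an invertible diagonal matrix.
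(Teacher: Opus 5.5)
Your proposal is correct and follows essentially the same route as the paper. You rescale with \cref{lem:scaling}, extract subsequential limits via \cref{cor:stability} and Rockafellar's theorem, and obtain $\bpsi(0)=\inf\bpsi$ from $(0,0)\in\spt\gamma\subset\partial\bpsi$ as in \cref{lem:Psi-finite}, with infinite mass coming from the homogeneity of $\mu$. You then use the subsequence principle under uniqueness, and get homogeneity by comparing two limits via $B(\lambda t)B(t)^{-1}\to\lambda^{E}$.

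One remark to tighten: $f\bigl(B(\lambda t)^{-1}B(t)x\bigr)\to f(\lambda^{-E}x)$ is uniform on compacts, not on unbounded sets bounded away from the origin. The continuous-mapping step therefore also needs the uniform tail bound $\lim_{R\to\infty}\sup_{t}\gamma_t(\{\|x\|>R\})=0$, which follows from relative compactness or from the marginal tails. This is the same level of detail as the paper's appeal to an extended mapping theorem.
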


\begin{proof}[Proof of Lemma~\ref{lem:tail_coup}]
    Choose a sequence $1 \le t_n \to \infty$ and consider the Borel measures $\bar\mu_n = t_n \, P(b_1(t_n)\point)$ and $\bar\nu_n = t_n \, Q(b_2(t_n)\point)$, both with mass $t_n$. In view of Lemma~\ref{lem:scaling}, the function $\psi_n = (b_2(t_n) b_1(t_n))^{-1} \psi(b_1(t_n) \point)$ is closed and convex, and its subdifferential $\partial\psi_n = B(t_n)^{-1} \partial \psi$ contains the support of the measure $\pi_n = t_n \, \pi(B(t_n) \point)$ that couples $\bar\mu_n$ and $\bar\nu_n$.

    By regular variation, $\res\bar\mu_n \Mto \mu$ and $\res\bar\nu_n \Mto \nu$ in $\Mz(\Rd)$ as $n \to \infty$, with $\mu$ and $\nu$ not depending on the sequence $t_n$. Moreover, for every $n$, the measure $\gamma_n=\res\pi_n$ is a zero-coupling between $\mu_n=\res\bar\mu_n$ and $\nu_n=\res\bar\nu_n$.
	By \cref{cor:stability}, there exists a subsequence of $t_n$, relabelled as $t_n$ again, that satisfies $\gamma_n \Mto \gamma \in \zcmcoup(\mu, \nu)$ in $\Mz(\Rd \times \Rd)$ and $\partial \psi_n \Fto \partial\bpsi$ as $n \to \infty$ along the subsequence, with $\bpsi$ a closed convex function satisfying $\supp\gamma \subset \gph \partial\bpsi$, the existence of which is a consequence of Rockafellar's Theorem~\ref{thm:rockafellar}.
	
	Since $\mu$ and $\nu$ have infinite mass and since their zero-coupling $\gamma$ is supported by $\partial \bpsi$, \cref{lem:Psi-finite} implies that $\bpsi(0)$ is finite.

    Suppose that $\zcmcoup(\mu, \nu)$ is a singleton, say $\cbr{\gamma}$. The convergence in \cref{eq:RV:conv:n} along some subsequence of an arbitrary sequence $t_n \to \infty$ implies $t \res \pi(B(t) \point) \Mto \gamma$ as $t \to \infty$; otherwise, we could find a sequence $t_n \to \infty$ along which the $\mrm{M}_0$-convergence to $\gamma$ would not take place.
    The matrix $B(t)$ is invertible for every $t > 0$ in an appropriate neighbourhood of infinity and we have $B(\lambda t)^{-1} B(t) \to \lambda^{-E}$ as $t \to \infty$ by regular variation of $b_1$ and $b_2$.
    For $\lambda \in (0, \infty)$, we have, on the one hand,
	\[
		\lambda t \, \res\pi(B(t) \point)
		\vto \lambda \gamma,
		\qquad t \to \infty,
	\]
	and on the other hand,
	\begin{equation}
	\label{eq:gammalambdat}
        \lambda t \, \res\pi(B(t) \point)
		= \lambda t \, \res\pi(B(\lambda t) B(\lambda t)^{-1} B(t) \point) 
		\vto \gamma(\lambda^{-E} \point),
		\qquad t \to \infty,
	\end{equation}
    from which \cref{eq:gammahomo} follows. To see \cref{eq:gammalambdat}, note that $\pi(B(t) \point) = (B(t)^{-1})_{\#} \pi$ and that, for any bounded and continuous function $f : \reals^{2d}_{\smallsetminus 0} \to \reals$ that vanishes on a neighbourhood of the origin, we have
    \begin{align*}
        \lambda t \rbr{\res\pi(B(t) \point)}(f)
        &= \lambda t \int_{\reals^2_{\smallsetminus 0}} f(B(t)^{-1} x) \, \diff \pi(x) \\
        &= \lambda t \int_{\reals^2_{\smallsetminus 0}} f(B(\lambda t)^{-1} B(\lambda t) B(t)^{-1} x) \diff \pi(x) \\ 
        &= \lambda t \rbr{\res\pi(B(\lambda t) \point)}(f_{\lambda, t}),
    \end{align*}
    where $f_{\lambda, t}(x) := f(B(\lambda t) B(t)^{-1}x) \to f_{\lambda,\infty}(x) = f(\lambda^E x)$ as $t \to \infty$ for all $x \in \R^{2d}_{\smallsetminus 0}$. Since the latter convergence takes place uniformly on compact subsets of $\R^{2d}_{\smallsetminus 0}$ and since the functions $f_{\lambda, t}$ for large $t$ and $f_{\lambda}$ all vanish on a common neighourhood of the origin, we find by an extension of the mapping theorem in \citet[Theorem~2.5]{hult2006regular} that
    \[
        \lambda t \rbr{\res\pi(B(t) \point)}(f)
        \to \gamma(f_{\lambda,\infty}) = \gamma(f(\lambda^E \point)) = (\gamma(\lambda^{-E} \point))(f),
        \qquad t \to \infty,
    \]
    as required.
\end{proof}

\begin{proof}[Proof of Theorem~\ref{main_thm_2}]
    (a) Since $\mu$ and $\nu$ are limit measures in the definition of regular variation, they are nonzero, and the homogeneity in \cref{eq:limit_homo} then implies $\mu(\Rdmz)=\nu(\Rdmz)=\infty$. By assumption~(i) and \cref{cor:BMcC}, it follows that $\zcmcoup(\mu,\nu)$ is a singleton, $\cbr{\gamma}$, say. \cref{lem:tail_coup} then implies that $\spt \gamma$ satisfies the homogeneity assumption in \cref{cor:cdtn_push_forward}, which, in combination with assumption~(ii), implies that $\gamma$ is proper. The existence and properties of $\bpsi$ follow from the same \cref{lem:tail_coup} too. Finally, the expressions for $\gamma$ and $\nu$ in terms of $\nabla\bpsi$ and $\mu$ follow from \cref{cor:BMcC}.

    \smallskip

    (b) The $\Mzp$-convergence in \cref{eq:RV:conv} is clear from (a) and \cref{lem:tail_coup}.
    By \cref{lem:scaling}, $\spt\rbr{\res\pi(B(t) \point)}$ is a subset of $B(t)^{-1} \partial\psi$, which is the subdifferential of the function $\psi_{b_1(t),b_2(t)}$ in \cref{eq:psib1tb2t}. Since $\zcmcoup(\mu,\nu)$ is a singleton from (a), and as $\res\pi(B(t) \point) \Mto \gamma$ as $t\to\infty$, we can apply \cref{cv:cm_coupling} to any sequence $0 < t_n \to \infty$.
    Again from the proof of Lemma~\ref{lem:tail_coup}, we have $\spt\gamma\subset\partial\bpsi$ for $\bpsi$ from (a). It follows that in $\Fell(V \times \Rd)$, we have
    \[
        B(t_n)^{-1} \partial \psi \llcorner V \Fto \partial \bpsi \llcorner V, \qquad n \to \infty.
    \]
    Since this is true for every sequence $t_n \to \infty$, the convergence in \cref{eq:RV:psi} follows.

    \smallskip

    (c) The two properties of $\gamma$ are already proven in Lemma~\ref{lem:tail_coup}.

    \smallskip

    (d) 
    Define $T = \spt \gamma$ and $T(x) = \cbr{y \in \Rd \mid (x,y) \in T}$ for $x \in \Rd$. Further, put $\dom T = \{x \in \Rd \mid T(x) \ne \emptyset\} = \proj_1(T)$. By \cref{eq:spt_homog}, we have $y \in T(x)$ if and only if  $\lambda^{1/\alpha_2} y \in T(\lambda^{1/\alpha_1} x)$, so that
	\begin{equation} 
	\label{eq:T_homog}
        \forall \lambda > 0, x \in \Rd: \qquad
		T(\lambda x)= \lambda^{\alpha_1/\alpha_2} T(x),
	\end{equation}
	an identity in which one and hence both sets can be empty.
	
	By \cref{eq:T_homog}, the set $\dom T$ is a multiplicative cone, which furthermore contains the origin. Further, note that $\dom T= \proj_1(\supp \gamma)$, so $\mu(\Rd \sauf \dom T)= 0$ and, by Lemma~\ref{lem:supp-zero-proj} and Remark~\ref{rk:zero_margin_spt_incl}, $\spt\mu = \cl(\dom T)$. As $\supp \gamma \subset \partial\bpsi$, also $\dom T \subset \dom \partial \bpsi$. Replace $\bpsi$ by $\bpsi - \bpsi(0)$ to ensure $\bpsi(0) = 0$ without changing the subdifferential or gradient of $\bpsi$.

	Let $x \in \dom T$ and define $f(t) = \bpsi(t x)$ for $t \geq 0$. The convex function $f$ is  finite on $[0, \infty)$, as $ \dom T \subset \dom \partial\bpsi$ and $\dom T$ is a multiplicative cone.
	Its subdifferential  $\partial f(t)$ contains the set $\inpr{\partial \bpsi(tx), x} = \{ \inpr{y, x} : y \in \partial \bpsi(tx) \}$ (just check the inequality in the definition of a subdifferential); therefore, it also contains the set $\inpr{T(tx), x}$.
	As a consequence, $f$ has a left derivative $f'_{-} $ and a right derivative $f'_{+}$ satisfying 
	$f'_{-}(t) \leq \inf \inpr{T(tx), x} \leq \sup \inpr{T(tx), x} \leq f'_{+}(t)$ 
	at every $t>0$. 	
	As furthermore
	\begin{equation}
        \label{eq:f}
        \forall t \in (0, \infty): \qquad
        f(t)
        = \int_{(0,t)} f'_{-}(s) \, \diff s
        = \int_{(0,t)} f'_{+}(s) \, \diff s, 
	\end{equation}	
	(e.g. \cite{Rockafellar1970}, Corollary 24.2.1), \cref{eq:T_homog} implies 
	\begin{equation}
        \label{eq:bpsi}
        \bpsi(tx)
        = \int_{(0,t)} \sup \inpr{T(sx), x} \, \diff s
        = \sup \inpr{T(x), x} \int_{(0,t)}  s^{\alpha_1/\alpha_2} \, \diff s.
	\end{equation}
    We find that $\bpsi(x)$ is uniquely determined by $T$ for $x \in \dom T$ and thus $\mu$-almost everywhere, and also that \cref{eq:bpsi:homog} holds for such $x$. 
    
    
    As in Lemma~\ref{lem:clconvhull}, let the function $\phi : \Rd \to \R \cup \cbr{\infty}$ be the closure of the convex hull of the restriction of $\bpsi$ to $D = \dom T$. 
    Its epigraph is 
    \[ 
        \epi \phi = \cl\rbr{\conv(\epi \bpsi \cap (D \times \reals))}.
    \] 
    By Lemma~\ref{lem:clconvhull}(c), we have $\partial \bpsi(x) \subset \partial \phi(x)$ for all $x \in D$, and therefore 
    \[ 
    	\spt \gamma 
    	\subset \partial \bar\psi \cap (D \times \Rd) 
    	\subset \partial \phi.
    \]
    \cref{thm:expr_zcmcoup} yields $\mu(\Rdmz \sauf \dom \nabla \phi) = 0$, that is, $\nabla \phi$ is defined $\mu$-almost everywhere.
    
    From Lemma~\ref{lem:clconvhull}(b), we get $\phi(x)= \bpsi(x)$ for $x \in D$. Since $D$ is a multiplicative cone and since $\bpsi(\lambda x) = \lambda^{\alpha_1/\alpha_2+1} \bpsi(x)$ for all $x \in D$ and $\lambda \ge 0$, we have
    \begin{equation}
    \label{eq:phi_homog}
    	\phi(\lambda x) = \lambda^{\alpha_1/\alpha_2+1} \phi(x), 
    	\qquad \lambda > 0, \, x \in D.
    \end{equation}

    Take $x \in D$ and $\lambda > 0$. By Lemma~\ref{lem:clconvhull}(b)--(c), we have 
    \[ 
        v \in \partial\phi(x) \iff \forall z \in D: \phi(z) \ge \phi(x) + \inpr{v, z-x}. 
    \]
    Therefore, as $D$ is a multiplicative cone, 
    \begin{align*}
        w \in \partial\phi(\lambda x)
        &\iff
        \forall z \in D: \phi(\lambda z) \ge \phi(\lambda x) + \inpr{w, \lambda z - \lambda x} \\
        &\iff 
        \forall z \in D: \phi(z) \ge \phi(x) + \inpr{\lambda^{-\alpha_1/\alpha_2} w, z-x} \\
        &\iff
        \lambda^{-\alpha_1/\alpha_2} w \in \partial \phi(x),
    \end{align*}
    where we used \cref{eq:phi_homog} in the second equivalence and Lemma~\ref{lem:clconvhull}(b)--(c) in the third one. We conclude
    \[
    	\partial\phi(\lambda x) = \lambda^{\alpha_1/\alpha_2} \partial\phi(x),
    	\qquad \lambda>0, \, x \in D.
    \]
    
    As a consequence, $D \cap \dom\nabla\phi$ is a multiplicative cone and $\nabla\phi(\lambda x) = \lambda^{\alpha_1/\alpha_2} \nabla\phi(x)$ for all $\lambda>0$ and all $x \in D \cap \dom\nabla\phi$. In view of Lemma~\ref{lem:clconvhull}(e), Eq.~\eqref{eq:homog:1} must hold for all $x \in D \cap \dom\nabla\phi$ and therefore $\mu$-almost everywhere. 
\end{proof}

\section{Discussion}
\label{sec:discussion}

We have studied measure transportation between infinite Borel measures on Euclidean space by cyclically monotone mappings. The measures under consideration are finite on sets bounded away from the origin, and so are their zero-couplings, which have the prescribed margins apart from a possible contribution at the origin. We have shown that a cyclically monotone zero-coupling always exists, and we have identified conditions under which it is unique and can be represented as the restriction to the punctured space of the push-forward of one measure by a cyclically monotone mapping, that is, the gradient of a closed convex function. In addition, we have established stability properties of cyclically monotone zero-couplings with respect to $\Mz$-convergence of measures and Fell convergence of subgradients.

The cost function was implicit in our work through the notion of cyclical monotonicity. In contrast to classical optimal transport theory, we did not assume finiteness of transport costs or moments, and no minimization of an expected cost was required. Allowing the measures to have potentially infinite mass leads to new phenomena compared to the case of probability measures. In particular, the possible presence of a sink or infinite reservoir of mass at the origin may preclude representations in terms of push-forward measures. This observation motivated the introduction of the notion of proper zero-couplings and the study of conditions ensuring their existence.

We have further connected these results to the theory of regular variation. Specifically, we have shown how cyclically monotone couplings between regularly varying probability measures converge at high levels to cyclically monotone zero-couplings between their limit measures. In this way, the present work provides a rigorous link between monotone transport for regularly varying distributions and monotone transport for their tail measures, thereby laying a theoretical foundation for the use of transport-based methods in multivariate extreme value analysis.

Several directions for further research remain open. One natural extension is the study of transport problems under cyclical monotonicity with respect to more general cost functions beyond the quadratic cost considered here. Another direction concerns the development of statistical methodology based on cyclically monotone transport for infinite measures, including inference and simulation methods for multivariate extremes and related stochastic models such as Lévy processes.

\bibliographystyle{chicago}
\bibliography{biblio}

\begin{thebibliography}{}

\bibitem[\protect\citeauthoryear{Albergo and Vanden-Eijnden}{Albergo and Vanden-Eijnden}{2022}]{albergo2022building}
Albergo, M.~S. and E.~Vanden-Eijnden (2022).
\newblock Building normalizing flows with stochastic interpolants.
\newblock {\em arXiv preprint arXiv:2209.15571\/}.

\bibitem[\protect\citeauthoryear{Alberti and Ambrosio}{Alberti and Ambrosio}{1999}]{AlbertiAmbrosio1999}
Alberti, G. and L.~Ambrosio (1999).
\newblock A geometrical approach to monotone functions in $\mathbb{R}^{n}$.
\newblock {\em Mathematische Zeitschrift\/}~{\em 230}, 259--316.

\bibitem[\protect\citeauthoryear{Aleksandrov}{Aleksandrov}{1942}]{aleksandrov1942}
Aleksandrov, A. (1942).
\newblock Existence and uniqueness of a convex surface with a given integral curvature.
\newblock {\em C. R. (Doklady) Acad. Sci. URSS(N.S.)\/}~{\em 35}, 131--134.

\bibitem[\protect\citeauthoryear{Ambrosio, Bru{\'e}, Semola, et~al.}{Ambrosio et~al.}{2021}]{ambrosio2021lectures}
Ambrosio, L., E.~Bru{\'e}, D.~Semola, et~al. (2021).
\newblock {\em Lectures on optimal transport}, Volume 130.
\newblock Springer.

\bibitem[\protect\citeauthoryear{Anderson and Klee}{Anderson and Klee}{1952}]{AndersonKlee1952}
Anderson, R.~D. and V.~L. Klee (1952).
\newblock Convex functions and upper semi-continuous collections.
\newblock {\em Duke Math. J.\/}~{\em 19}, 349--357.

\bibitem[\protect\citeauthoryear{Beiglböck}{Beiglböck}{2015}]{Beiglböck2015}
Beiglböck, M. (2015).
\newblock Cyclical monotonicity and the ergodic theorem.
\newblock {\em Ergodic Theory Dyn. Syst.\/}~{\em 35(3)}.

\bibitem[\protect\citeauthoryear{Beirlant, Buitendag, Del~Barrio, Hallin, and Kamper}{Beirlant et~al.}{2020}]{Beirlant2020}
Beirlant, J., S.~Buitendag, E.~Del~Barrio, M.~Hallin, and F.~Kamper (2020).
\newblock Center-outward quantiles and the measurement of multivariate risk.
\newblock {\em Insurance: Mathematics and Economics\/}~{\em 95}, 79--100.

\bibitem[\protect\citeauthoryear{Bingham, Goldie, Teugels, and Teugels}{Bingham et~al.}{1989}]{bingham1989regular}
Bingham, N.~H., C.~M. Goldie, J.~L. Teugels, and J.~Teugels (1989).
\newblock {\em Regular Variation}.
\newblock Cambridge University Press.

\bibitem[\protect\citeauthoryear{Brenier}{Brenier}{1987}]{brenier1987decomposition}
Brenier, Y. (1987).
\newblock D{\'e}composition polaire et r{\'e}arrangement monotone des champs de vecteurs.
\newblock {\em CR Acad. Sci. Paris S{\'e}r. I Math.\/}~{\em 305}, 805--808.

\bibitem[\protect\citeauthoryear{Brenier}{Brenier}{1991}]{brenier1991polar}
Brenier, Y. (1991).
\newblock Polar factorization and monotone rearrangement of vector-valued functions.
\newblock {\em Communications on pure and applied mathematics\/}~{\em 44\/}(4), 375--417.

\bibitem[\protect\citeauthoryear{Catalano, Lavenant, Lijoi, and Prünster}{Catalano et~al.}{2021}]{CatalanoLavenant2021}
Catalano, M., H.~Lavenant, A.~Lijoi, and I.~Prünster (2021).
\newblock A {Wasserstein} index of dependence for random measures.
\newblock {\em arXiv preprint arXiv:2109.06646\/}.

\bibitem[\protect\citeauthoryear{Chernozhukov, Galichon, Hallin, and Henry}{Chernozhukov et~al.}{2017}]{Chernozhukov2017}
Chernozhukov, V., A.~Galichon, M.~Hallin, and M.~Henry (2017).
\newblock Monge–kantorovich depth, quantiles, ranks and signs.
\newblock {\em The Annals of Statistics\/}~{\em 45}, 223--256.

\bibitem[\protect\citeauthoryear{de~Haan and Ferreira}{de~Haan and Ferreira}{2006}]{de2006extreme}
de~Haan, L. and A.~Ferreira (2006).
\newblock {\em Extreme Value Theory: An Introduction}.
\newblock Springer New York.

\bibitem[\protect\citeauthoryear{De~Pascale, Kausamo, and Wyczesany}{De~Pascale et~al.}{2023}]{de202360}
De~Pascale, L., A.~Kausamo, and K.~Wyczesany (2023).
\newblock 60 years of cyclic monotonicity: a survey.
\newblock {\em arXiv preprint arXiv:2308.07682\/}.

\bibitem[\protect\citeauthoryear{{de Valk} and Segers}{{de Valk} and Segers}{2019}]{devalk2019tailsoptimaltransportplans}
{de Valk}, C. and J.~Segers (2019).
\newblock Tails of optimal transport plans for regularly varying probability measures.
\newblock {\em arXiv preprint arXiv:1811.12061v2\/}.

\bibitem[\protect\citeauthoryear{Figalli and Gigli}{Figalli and Gigli}{2010}]{figalli2010new}
Figalli, A. and N.~Gigli (2010).
\newblock A new transportation distance between non-negative measures, with applications to gradients flows with {Dirichlet} boundary conditions.
\newblock {\em Journal de math{\'e}matiques pures et appliqu{\'e}es\/}~{\em 94\/}(2), 107--130.

\bibitem[\protect\citeauthoryear{Guillen, Mou, and Swiech}{Guillen et~al.}{2019}]{GuillenMou2019}
Guillen, N., C.~Mou, and S.~Swiech (2019).
\newblock Coupling {Lévy} measures and comparison principles for viscosity solutions.
\newblock {\em Transactions of the American Mathematical Society\/}~{\em 372\/}(10), 7327--7370.

\bibitem[\protect\citeauthoryear{Hallin}{Hallin}{2022}]{Hallin2022}
Hallin, M. (2022).
\newblock Measure transportation and statistical decision theory.
\newblock {\em Annual Review of Statistics and Its Application\/}~{\em 9\/}(1), 401--424.

\bibitem[\protect\citeauthoryear{Hallin, Del~Barrio, Cuesta-Albertos, and Matrán}{Hallin et~al.}{2021}]{hallin2021}
Hallin, M., E.~Del~Barrio, J.~Cuesta-Albertos, and C.~Matrán (2021).
\newblock Distribution and quantile functions, ranks and signs in dimension d: a measure transportation approach.
\newblock {\em Annals of Statistics\/}~{\em 49}, 1139--1165.

\bibitem[\protect\citeauthoryear{Huesmann}{Huesmann}{2016}]{huesmann2016transportrandom}
Huesmann, M. (2016).
\newblock Optimal transport between random measures.
\newblock {\em Ann. Inst. Henri Poincar{\'e} Probab. Stat.\/}~{\em 52(1)}, 196–232.

\bibitem[\protect\citeauthoryear{Huesmann and Sturm}{Huesmann and Sturm}{2013}]{huesmann2013optimal}
Huesmann, M. and K.-T. Sturm (2013).
\newblock Optimal transport from {Lebesgue} to {Poisson}.
\newblock {\em The Annals of Probability\/}~{\em 41\/}(4), 2426--2478.

\bibitem[\protect\citeauthoryear{Hult and Lindskog}{Hult and Lindskog}{2006}]{hult2006regular}
Hult, H. and F.~Lindskog (2006).
\newblock Regular {Variation} for {Measures} on {Metric} {Spaces}.
\newblock {\em Publ. Inst. Math.\/}~{\em 80\/}(94), 121--140.

\bibitem[\protect\citeauthoryear{Lindskog, Resnick, and Roy}{Lindskog et~al.}{2014}]{lindskog2014regular}
Lindskog, F., S.~I. Resnick, and J.~Roy (2014).
\newblock Regularly varying measures on metric spaces: Hidden regular variation and hidden jumps.
\newblock {\em Probability Surveys\/}~{\em 11}, 270--314.

\bibitem[\protect\citeauthoryear{Lipman, Chen, Ben-Hamu, Nickel, and Le}{Lipman et~al.}{2022}]{lipman2022flow}
Lipman, Y., R.~T. Chen, H.~Ben-Hamu, M.~Nickel, and M.~Le (2022).
\newblock Flow matching for generative modeling.
\newblock {\em arXiv preprint arXiv:2210.02747\/}.

\bibitem[\protect\citeauthoryear{McCann}{McCann}{1995}]{McCann1995}
McCann, R.~J. (1995).
\newblock Existence and uniqueness of monotone measure-preserving maps.
\newblock {\em Duke Mathematical Journal\/}~{\em 80\/}(2), 309--323.

\bibitem[\protect\citeauthoryear{Molchanov}{Molchanov}{2017}]{Molchanov2017}
Molchanov, I. (2017).
\newblock {\em Theory of Random Sets}.
\newblock Springer Cham.

\bibitem[\protect\citeauthoryear{Pratelli}{Pratelli}{2008}]{pratelli2008sufficiency}
Pratelli, A. (2008).
\newblock On the sufficiency of c-cyclical monotonicity for optimality of transport plans.
\newblock {\em Mathematische Zeitschrift\/}~{\em 258\/}(3), 677--690.

\bibitem[\protect\citeauthoryear{Resnick}{Resnick}{2007}]{resnick2007heavy}
Resnick, S.~I. (2007).
\newblock {\em Heavy-Tail Phenomena: Probabilistic and Statistical Modeling}.
\newblock Springer Science \& Business Media.

\bibitem[\protect\citeauthoryear{Resnick}{Resnick}{2008}]{Resnick2008}
Resnick, S.~I. (2008).
\newblock {\em Extreme Values, Regular Variation and Point Processes}.
\newblock Springer Science \& Business Media.

\bibitem[\protect\citeauthoryear{Resnick}{Resnick}{2024}]{Resnick2024}
Resnick, S.~I. (2024).
\newblock {\em The art of finding hidden risks: Hidden regular variation in the 21st century}.
\newblock Springer Nature.

\bibitem[\protect\citeauthoryear{Rockafellar}{Rockafellar}{1970}]{Rockafellar1970}
Rockafellar, R.~T. (1970).
\newblock {\em Convex Analysis}.
\newblock Princeton: Princeton University Press.

\bibitem[\protect\citeauthoryear{Rockafellar and Wets}{Rockafellar and Wets}{1998}]{rockafellarwets98}
Rockafellar, R.~T. and R.~J.-B. Wets (1998).
\newblock {\em Variational Analysis}.
\newblock New York : Springer.

\bibitem[\protect\citeauthoryear{R{\"u}schendorf and Rachev}{R{\"u}schendorf and Rachev}{1990}]{ruschendorf1990characterization}
R{\"u}schendorf, L. and S.~T. Rachev (1990).
\newblock A characterization of random variables with minimum ${L}^2$-distance.
\newblock {\em Journal of Multivariate Analysis\/}~{\em 32\/}(1), 48--54.

\bibitem[\protect\citeauthoryear{Santambrogio}{Santambrogio}{2015}]{santambrogio2015optimal}
Santambrogio, F. (2015).
\newblock {\em {Optimal Transport for Applied Mathematicians: Calculus of Variations, PDEs, and Modeling}}.
\newblock Cham: Birkh{\"a}user/Springer.

\bibitem[\protect\citeauthoryear{Schachermayer and Teichmann}{Schachermayer and Teichmann}{2009}]{schachermayer2009characterization}
Schachermayer, W. and J.~Teichmann (2009).
\newblock Characterization of optimal transport plans for the {Monge--Kantorovich} problem.
\newblock {\em Proceedings of the American Mathematical Society\/}~{\em 137\/}(2), 519--529.

\bibitem[\protect\citeauthoryear{Segers}{Segers}{2022}]{segers2022}
Segers, J. (2022).
\newblock Graphical and uniform consistency of estimated optimal transport plans.
\newblock {\em arXiv preprint arXiv:2208.02508\/}.

\bibitem[\protect\citeauthoryear{Villani}{Villani}{2009}]{villani2009optimal}
Villani, C. (2009).
\newblock {\em {Optimal Transport: Old and New}}, Volume 338.
\newblock Springer.

\end{thebibliography}

\appendix

\section{Background}
\label{sec:background}


\subsection{Portmanteau and Prohorov theorem for infinite measures}
\label{section:Mzp}

The space $\Mzp(\Rd)$ enjoys useful properties similar to the classical Portmanteau theorem or Prohorov theorem for probability measures which will be of constant use. We refer to  \cite{hult2006regular, lindskog2014regular} for details and quote here the results we need.

\begin{theorem}[Portmanteau Theorem 2.4 in \cite{hult2006regular}]
\label{thm:portmanteau}
Let $\mu,\mu_n\in\Mzp(\Rd),n\ge1$. The following statements are equivalent.
\begin{enumerate}[label=(\roman*)]
\item $\mu_n\Mto\mu$ as $n\to\infty$,

\item $\mu_n(A)\to\mu(A)$ as $n\to\infty$ for every $A\in\Szbor$ satisfying $\mu(\bnd A)=0$ and $\cl(A)\cap\cbr{0}=\emptyset$ (closure in $\Rd$ as a whole),

\item $ \mu(\intr(A)) \le \liminf  \mu_n(\intr(A)\le \limsup \mu_n(\cl(A)) \le \mu(\cl(A))$ for every $A\in\Rdmz$ such that $\cl(A)\cap\cbr{0}=\emptyset$ (closure in $\Rd$ as a whole).
\end{enumerate}
\end{theorem}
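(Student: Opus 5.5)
The statement to be proved is the Portmanteau theorem for $\Mz(\Rd)$, quoted from \cite{hult2006regular}. I would prove it by reducing to the classical Portmanteau theorem for finite measures. The reduction works by restricting to complements of small balls around the origin.

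The key device is the following. For $r>0$ let $F_r=\Rd\sauf\ball(0,r)$. Since $\mu(\bnd\ball(0,r))>0$ can hold for at most countably many $r$, we may choose a sequence $r_k\downarrow 0$ with $\mu(\bnd \ball(0,r_k))=0$. Then the restrictions $\mu_n(\point\cap F_{r_k})$ and $\mu(\point\cap F_{r_k})$ are finite measures on $\Rd$.

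\textbf{(i)$\Rightarrow$(ii) and (i)$\Rightarrow$(iii).} Assume (i) and fix a set $A$ with $0\notin\cl A$. Pick $r=r_k$ so that $A\subset F_r$ and in fact $\cl A\cap \cl\ball(0,r)=\emptyset$.
\begin{itemize}
\item I claim $\mu_n(\point\cap F_r)\to\mu(\point\cap F_r)$ weakly as finite measures. Given $f$ bounded continuous, approximate $f\mathbf 1_{F_r}$ from above and below by functions $f\,g_\eps^{\pm}$. Here $g_\eps^\pm$ are continuous cutoffs that equal $\mathbf 1_{F_r}$ outside the annulus $\{r-\eps\le\nbr x\le r+\eps\}$, and both vanish near $0$, so $f\,g_\eps^\pm\in\Cbz(\Rd)$.
\item Condition (i) gives convergence of the integrals of $f\,g_\eps^\pm$. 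The gap between the two limits is at most $\nbr f_\infty\,\mu(\{r-\eps\le\nbr x\le r+\eps\})$, which tends to $0$ as $\eps\to0$ because $\mu(\bnd\ball(0,r))=0$.
\item The classical Portmanteau theorem then yields both the continuity-set convergence in (ii) and the interior/closure inequalities in (iii) for subsets of $F_r$.
\end{itemize}

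\textbf{(iii)$\Rightarrow$(ii).} This is immediate: if $\mu(\bnd A)=0$, then $\mu(\intr A)=\mu(\cl A)$, and the sandwich in (iii) forces $\mu_n(A)\to\mu(A)$.

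\textbf{(ii)$\Rightarrow$(i).} Take $f\in\Cbz(\Rd)$ vanishing on $\ball(0,r)$ with $r=r_k$. Write $f=f^+-f^-$ and use the layer-cake formula
\[
\mu_n(f^+)=\int_0^{\nbr f_\infty}\mu_n(\{f^+>s\})\,\diff s .
\]
\begin{itemize}
\item For all but countably many $s>0$, the set $\{f^+>s\}$ is bounded away from $0$ and is a $\mu$-continuity set. This holds because its boundary lies in $\{f^+=s\}$, and these level sets are disjoint and have finite $\mu$-mass inside $F_r$.
\item Condition (ii) therefore gives pointwise convergence of the integrand for almost every $s$.
\item The integrands are uniformly bounded by $\sup_n\mu_n(F_r)$. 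This is finite because (ii) applies to $F_r$ itself: we chose $r$ with $\mu(\bnd F_r)=0$, so $\mu_n(F_r)\to\mu(F_r)<\infty$.
\item Dominated convergence then gives $\mu_n(f^\pm)\to\mu(f^\pm)$, hence $\mu_n(f)\to\mu(f)$.
\end{itemize}

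\textbf{Main obstacle.} The delicate point is the uniform finiteness of the masses near the origin, namely $\sup_n\mu_n(F_r)<\infty$. Bounded continuous test functions are not allowed to reach the origin, so this bound cannot be read off directly. The choice of radii $r_k$ with $\mu(\bnd\ball(0,r_k))=0$ is exactly what supplies it in each direction of the argument.
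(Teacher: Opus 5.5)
Your proof is correct and takes essentially the same route as the source the paper relies on. The paper gives no proof of its own and quotes Theorem~2.4 of \cite{hult2006regular}, which is derived in the same way: restrict to complements of balls $\Rd\sauf\ball(0,r)$ with $\mu$-null boundary spheres, then invoke the classical Portmanteau theorem for finite measures. One small fix is needed: the sandwich $f g_\eps^- \le f\mathbf{1}_{F_r} \le f g_\eps^+$ holds only for $f\ge 0$, so apply it to $f^+$ and $f^-$ separately.
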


\begin{theorem}[Prohorov Theorem 2.7 in \cite{hult2006regular}]
\label{thm:prohorov_Mzp}
The set $M\subset\Mzp(\Rd)$ is relatively compact if and only if there exists a sequence $r_i>0,i\ge1$ satisfying $r_i\downarrow0$ as $i\to\infty$ such that for each $i\ge1$ we have 
$$\sup_{\mu\in M}\mu(\Rd\setminus\ball_{0,r_i}) < \infty$$
and for each $\eta >0$ there exists a compact set $K_i\subset\Rd\setminus\ball_{0,r_i}$ such that
\[ 
    \sup_{\mu\in M}\mu\brbr{\Rd\setminus(\ball_{0,r_i}\cup K_i)} < \eta . 
\]
\end{theorem}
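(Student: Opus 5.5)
The statement is the Prohorov-type characterization quoted from \cite{hult2006regular}. To prove it directly, I would reduce it to the classical Prohorov theorem on the closed sets $F_i = \Rd\setminus\ball_{0,r_i}$, which are Polish spaces. The basic tool is the restriction map $\mu\mapsto\mu^{(r)} := \mu(\point\cap(\Rd\setminus\ball_{0,r}))$, which produces a finite measure. The key observation is that $\mu_n\Mto\mu$ holds if and only if $\mu_n^{(r)}\to\mu^{(r)}$ weakly for all $r$ in a set of radii tending to $0$ with $\mu(\{\nbr{x}=r\})=0$. This equivalence follows from \cref{thm:portmanteau}, using that the spheres $\{\nbr{x}=r\}$ carry positive $\mu$-mass for at most countably many $r$.

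\textbf{Sufficiency.} Let $(\mu_n)$ be a sequence in $M$. For each fixed $i$, the restrictions $\mu_n^{(r_i)}$ are finite measures on the Polish space $F_i$. The two hypotheses give a uniform bound on their total mass and uniform tightness via the compacts $K_i$. The classical Prohorov theorem for finite measures therefore yields a weakly convergent subsequence. A diagonal extraction over $i$ then gives one subsequence along which $\mu_n^{(r_i)}\to m_i$ weakly for every $i$.

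The main obstacle is gluing the limits $m_i$ into a single $\mu\in\Mzp(\Rd)$. The difficulty is that, because of mass on the boundary spheres, $m_{i+1}$ restricted to $F_i$ need not equal $m_i$ exactly. To handle this, I would work with test functions $f\in\Cbz(\Rd)$. For such $f$, choose $i$ with $f=0$ on $\ball_{0,r_i}$; then $\mu_n(f)=\mu_n^{(r_i)}(f)\to m_i(f)$, and this limit does not depend on which admissible $i$ was chosen. This defines a positive linear functional $\Lambda$ on $\Cbz$ that is finite on functions supported in each $F_i$ and tight there. A Riesz/Daniell representation applied on each open annulus-complement $\{\nbr{x}>r\}$ then produces consistent measures, and these define $\mu\in\Mzp$ with $\mu(f)=\Lambda(f)$. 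Hence $\mu_n\Mto\mu$.

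\textbf{Necessity.} Suppose $M$ is relatively compact but the mass bound fails for some $r$. Then there are $\mu_n\in M$ with $\mu_n(\Rd\setminus\ball_{0,r})\to\infty$. Take a $\Mzp$-convergent subsequence and a function $f\in\Cbz$ with $f=1$ off $\ball_{0,r}$; then $\mu_n(f)\to\mu(f)<\infty$, which contradicts $\mu_n(f)\ge\mu_n(\Rd\setminus\ball_{0,r})\to\infty$. Tightness is obtained the same way: if it failed, there would be mass $\eta$ escaping to infinity within $F_{r}$, and testing against functions equal to $1$ near infinity and vanishing near $0$ contradicts convergence along a subsequence. This is the standard argument, with the radii $r_i$ chosen so that the limit measures put no mass on the spheres $\{\nbr{x}=r_i\}$.
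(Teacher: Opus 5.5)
Your argument is correct. The paper gives no proof of this statement: it is quoted from \citet{hult2006regular}, where it is proved in a general complete separable metric space. That proof describes $\Mz$-convergence through weak convergence of the restrictions $\mu^{(r)}$ (and a metric built from them), and then applies the classical Prohorov theorem on each set $\{d(x,0)\ge r\}$.

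You follow the same overall strategy: restrict to $F_i=\Rd\setminus\ball_{0,r_i}$, apply classical Prohorov, diagonalise. You bypass the restriction machinery, however. The characterisation announced in your first paragraph is never actually used. Instead you define the limit directly as the functional $\Lambda(f)=\lim_n\mu_n(f)$ on $\Cbz$, which cleanly absorbs the boundary-sphere inconsistency between the $m_i$.

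The gluing does not even need a Riesz step. Every $f\in\Cbz$ vanishing on $\ball_{0,r_i}$ also vanishes on the sphere $\cbr{\nbr{x}=r_i}$. Hence, for $j>i$, the restrictions of $m_i$ and $m_j$ to the open set $\cbr{\nbr{x}>r_i}$ agree on compactly supported continuous functions, and so coincide. Then $\mu(A)=\lim_i m_i\rbr{A\cap\cbr{\nbr{x}>r_i}}$ defines an element of $\Mzp$ with $\mu(f)=m_i(f)=\Lambda(f)$ directly. Tightness is needed only in the extraction step.

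In the necessity part you exploit the local compactness of $\Rd$: compact subsets of $F_i$ may be taken to be closed annuli. So if tightness fails, there are $\mu_k\in M$ with $\mu_k\rbr{\cbr{\nbr{x}>k}}\ge\eta$ for some fixed $\eta>0$. Any subsequential limit $\mu$ then satisfies $\mu\rbr{\cbr{\nbr{x}>R}}\ge\eta$ for every $R$, by testing against a function in $\Cbz$ equal to $1$ far out. This contradicts $\mu\rbr{\cbr{\nbr{x}\ge 1}}<\infty$. The argument shows that both conditions hold for every sequence $r_i\downarrow 0$, so your closing remark about choosing radii without boundary mass is unnecessary.

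The trade-off is this. Your proof is short and self-contained in $\Rd$. The cited route works in arbitrary Polish spaces, where compacts are not annuli. There, necessity of tightness must go through the standard Prohorov argument based on separability and completeness, applied to $\cbr{\mu^{(r)}:\mu\in M}$.
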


\cref{thm:prohorov_Mzp} gives a path to prove the $\Mzp$-convergence of $\mu_n$ to $\mu$ by showing that (i) the set $\cbr{\mu_n}_{n\ge1}$ is relatively compact and (ii) all its accumulation points are equal to $\mu$.



\subsection{Convex functions and cyclical monotonicity}
\label{section:CVX}


In this subsection we recall convex functions and cyclical monotonicity, which are connected to optimal transport. 

A function $f: \Rd \to \rset\cup\{+\infty\}$ is said to be convex if the inequality
\[
    f \rbr{ (1-\tau)x + \tau y } \le \rbr{1-\tau} f(x) + \tau f(y)
\] 
holds for every $x,y\in\Rd$ and $\tau \in (0,1)$; equivalently, $f$ is a convex function if its epigraph $\epi f = \cbr{(x, t) \in \Rd \times \rset \mid f(x) \le t}$ is a convex set. The convex function $f$ is proper if $f(x)<+\infty$ for some $x$ in $\Rd$ and it is closed if its epigraph is closed, or equivalently, if $f$ is lower semi-continuous. The effective domain of $f$ is the set  $\dom f = \{ x \in \Rd \mid f(x) \in \rset \}$. 



For any proper, convex function $f: \Rd \to \rset\cup\{+\infty\}$ and any point $\bar x$ lying in $\dom f$, we call subgradient of $f$ at $\bar x$ any point $v\in\Rd$ satisfying
\[
f(x) \ge f(\bar x) + \inpr{v,x-\bar x}
\]
for every $x$ in $\Rd$. We let $\partial f(\bar x)$ denote the set containing all such $v$ and call it the subdifferential of $f$ at $\bar x$.
Further, $\partial f = \cbr{(\bar x, v) \in \dom f \times \Rd \mid v \in \partial f(\bar{x})}$.
The domain of $\partial f$, notation $\dom \partial f$, is the set of all $x$ at which $\partial f(x)$ is not empty. The function $f$ is differentiable at $x$ if and only $\partial f(x)$ is a singleton.
Combining \citet[Theorem~6.3, Theorem~23.4, and Corollary~25.1.1]{Rockafellar1970}, we have in fact
\begin{equation}
\label{eq:domains}
	\left.
	\begin{array}{rcrcr}
		\dom \nabla f &\subset& \intr(\dom \partial f) &=& \intr(\dom f), \\
		&& \dom \partial f\phantom{)} &\subset& \dom f\phantom{)}, \\
		&& \cl(\dom \partial f) &=& \cl(\dom f).
	\end{array}	
	\right\}
\end{equation}

The concept of (cyclical) monotonicity is linked to the subdifferentials of convex functions.

\begin{definition}[Cyclically monotone maps and sets]
\label{def:monotonicity}
A set $T \subset \RdRd$ is cyclically monotone if,  for every integer $n \ge 1$ and all $(x_1,y_1),\ldots,(x_n,y_n) \in T$ with $y_{n+1} := y_1$, we have 
\begin{equation}
    \label{rel:cyclical_mononicity}
    \sum_{i=1}^n \inpr{x_i,y_i} \ge \sum_{i=1}^n \inpr{x_i,y_{i+1}}.
\end{equation}
If \cref{rel:cyclical_mononicity} holds for $n=2$, then $T$ is monotone.
The set $T$ is maximal (cyclically) monotone if it is not contained in a larger subset of $\RdRd$ that is also (cyclically) monotone.
\end{definition}



For a family $\mcal{G}$ of subsets of $\RdRd$, we write $\mcal{G}_{(c)m}$ for the subfamily consisting of (cyclically) monotone sets in $\mcal{G}$. 
Likewise we add m(c)m for maximal (cyclically) monotonicity. The following theorem due to Rockafellar establishes a connection between convex functions and cyclically monotone sets. It is taken from Theorems~24.8 and~24.9 in \cite{Rockafellar1970} and Theorem~12.25 in \cite{rockafellarwets98}.

\begin{theorem}[Rockafellar's Theorem]
\label{thm:rockafellar}
\begin{enumerate}[label=(\alph*)]
\item A set $T \subset \RdRd$ is cyclically monotone if and only if it is contained in the subdifferential of a closed convex function.

\item A set $T \subset \RdRd$ has the form $T = \partial \psi$ for some proper, closed convex function $\psi : \Rd\to\rset\cup\{+\infty\}$ if and only if $T$ is maximal cyclically monotone. Then $\psi$ is determined by $T$ uniquely up to an additive constant.
\end{enumerate}
\end{theorem}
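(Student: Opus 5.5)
We prove the two parts separately. Part (a) follows from the explicit potential construction. Part (b) follows from maximal monotonicity of subdifferentials together with an integration argument along segments.

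\emph{Part (a).} The easy direction: suppose $T \subset \partial\psi$ with $\psi$ closed convex, and take $(x_1,y_1),\ldots,(x_n,y_n) \in T$. The subgradient inequalities give $\psi(x_{i+1}) \ge \psi(x_i) + \inpr{x_{i+1}-x_i, y_i}$, where $x_{n+1}=x_1$ and all $\psi(x_i)$ are finite. Summing over the cycle makes the $\psi$-terms telescope, and we obtain exactly \cref{rel:cyclical_mononicity}.

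Conversely, let $T$ be cyclically monotone and nonempty; if $T=\emptyset$, take $\psi \equiv 0$. Fix $(x_0,y_0) \in T$ and define Rockafellar's potential
\[
    \psi(x) = \sup\cbr{ \inpr{x - x_n, y_n} + \sum_{i=0}^{n-1} \inpr{x_{i+1}-x_i, y_i} },
\]
where the supremum runs over all finite chains $(x_1,y_1),\ldots,(x_n,y_n) \in T$. As a supremum of affine functions, $\psi$ is convex and lower semicontinuous. Closing any chain back at $x_0$ and applying cyclic monotonicity gives $\psi(x_0) \le 0$, so $\psi$ is proper.

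Now let $(x,y)\in T$. Appending $(x,y)$ to an arbitrary chain shows $\psi(z) \ge \psi(x) + \inpr{z-x,y}$ for every $z$. It remains to check $x \in \dom\psi$. Taking $z=x_0$ in the previous inequality gives $\psi(x) \le \psi(x_0) - \inpr{x_0-x,y} < \infty$. Hence $(x,y) \in \partial\psi$, so $T \subset \partial\psi$.

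\emph{Part (b).} If $T$ is maximal cyclically monotone, part (a) gives $T \subset \partial\psi$ for some proper closed convex $\psi$. Since $\partial\psi$ is itself cyclically monotone (again by (a)), maximality forces $T = \partial\psi$.

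For the converse, it suffices to show that $\partial\psi$ is maximal monotone, since a cyclically monotone set containing $\partial\psi$ is in particular monotone. I expect this to be the main obstacle, and I would use Minty's argument via the proximal map. Let $(x,y)$ satisfy $\inpr{x-a,y-b}\ge 0$ for all $(a,b)\in\partial\psi$, and minimize $z \mapsto \psi(z) + \tfrac12\nbr{z-(x+y)}^2$. This function is coercive and lower semicontinuous, so a minimizer $z$ exists, and it satisfies $b := x+y-z \in \partial\psi(z)$. Plugging $(a,b)=(z,\,x+y-z)$ into the monotonicity inequality gives $\inpr{x-z,\,z-x} \ge 0$, that is, $-\nbr{x-z}^2 \ge 0$. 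Hence $z=x$ and $y = b \in \partial\psi(x)$.

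For uniqueness up to an additive constant, suppose $\partial\psi_1=\partial\psi_2$. By \cref{eq:domains}, both functions have the same interior of domain $U$ and the same closure of domain. On each segment in the convex open set $U$, a convex function is absolutely continuous with derivative $\inpr{v, \text{direction}}$ for any subgradient $v$ at almost every point of the segment (cf. \citet[Corollary~24.2.1]{Rockafellar1970}). Integrating along segments therefore shows that $\psi_1-\psi_2$ is constant on $U$. A closed convex function is determined by its values on the relative interior of its domain, through radial limits of the form $\psi(x) = \lim_{\tau\uparrow 1}\psi((1-\tau)u+\tau x)$ with $u\in U$. This extends the constant difference to all of $\Rd$. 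The degenerate case $U=\emptyset$ is handled in the same way, working in the affine hull of the domain and using relative interiors.
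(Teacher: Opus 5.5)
Your proof is correct. The paper does not prove this statement; it quotes it from \citet[Theorems~24.8 and~24.9]{Rockafellar1970} and \citet[Theorem~12.25]{rockafellarwets98}. Your argument is the standard one behind those references: Rockafellar's chain potential for (a), Minty's proximal argument (essentially that of \citet[Theorem~12.17]{rockafellarwets98}) for maximality, and integration of subgradients along segments plus radial limits for uniqueness up to a constant.

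Two small remarks. First, telescoping the subgradient inequalities gives $\sum_i \inpr{x_{i+1}-x_i, y_i} \le 0$. This matches \eqref{rel:cyclical_mononicity} only after reversing the order of the cycle, which is harmless because the condition is imposed for all tuples. Second, your Minty step actually shows that $\partial\psi$ is maximal \emph{monotone}. That is exactly the stronger fact the paper invokes separately, by citation, in the proof of \cref{cv:cm_coupling}.
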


The following lemma is used in the proof of \cref{thm:unique}.

\begin{lemma}
	\label{lem:smallset}
	Let $\psi$ and $\phi$ be convex functions on $\Rd$. For every $t \in \reals$, the set
	\begin{equation}
	\label{eq:smallset}
		\cbr{ 
			x \in \dom \nabla\psi \cap \dom \nabla\phi \mid 
			\psi(x) - \phi(x) = t \text{ and } \nabla\psi(x) \ne \nabla\phi(x)
		}
	\end{equation}
	has Hausdorff dimension at most $d-1$.
\end{lemma}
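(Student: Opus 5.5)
Write $f = \psi - \phi$ and let $S_t$ denote the set in \cref{eq:smallset}. I plan to show that $S_t$ is a countable union of subsets of Lipschitz graphs over hyperplanes. Hausdorff dimension is countably stable for arbitrary sets, with no measurability needed, and a Lipschitz graph over a $(d-1)$-dimensional hyperplane has Hausdorff dimension at most $d-1$. Together these give the claim. This is the Aleksandrov-type "implicit function theorem for differences of convex functions" behind \citet[Lemma~13]{McCann1995}. The only care needed is that $f$ is differentiable at the points of $S_t$ without any uniformity, and this is what the decomposition handles.

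\emph{Step 1: the pieces.} For $x \in S_t$ both $\psi$ and $\phi$ are differentiable at $x$, and $x$ lies in $\intr(\dom\psi)\cap\intr(\dom\phi)$ by \cref{eq:domains}. Convexity gives the global lower bounds $\psi(y) \ge \psi(x) + \inpr{\nabla\psi(x), y-x}$ and the same for $\phi$. Differentiability gives, for each $\eps > 0$, some $\delta > 0$ with $\psi(y) \le \psi(x) + \inpr{\nabla\psi(x), y-x} + \eps\nbr{y-x}$ for $\nbr{y-x} < \delta$, and likewise for $\phi$. Fix a countable dense set $\{v_j\}$ of $\Rdmz$. For integers $j,k,m$, let $S_{j,k,m}$ be the set of $x \in S_t$ such that:
\begin{itemize}
\item $\nbr{\nabla\psi(x)-\nabla\phi(x) - v_j} < \eps_j$, where $\eps_j = \nbr{v_j}/10$;
\item the two upper bounds above hold with $\eps = \eps_j$ for all $\nbr{y-x} < 1/k$;
\item $x \in \ball(q_m, 1/(2k))$, where $\{q_m\}$ is a countable dense set in $\Rd$.
\end{itemize}
Since $\nabla\psi(x) \ne \nabla\phi(x)$ on $S_t$, every point of $S_t$ lies in some $S_{j,k,m}$.

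\emph{Step 2: each piece is a Lipschitz graph.} Take $x, y \in S_{j,k,m}$. Then $\nbr{x-y} < 1/k$ and $f(x) = f(y) = t$. Combining the upper bound for $\psi$ at $x$ with the lower bound for $\phi$ at $x$ gives
\[
0 = f(y) - f(x) \le \inpr{\nabla\psi(x)-\nabla\phi(x), y-x} + \eps_j\nbr{y-x}.
\]
Exchanging the roles of $\psi$ and $\phi$ gives the reverse inequality. Hence $\abr{\inpr{\nabla f(x), y-x}} \le \eps_j \nbr{y-x}$, and therefore $\abr{\inpr{v_j, y-x}} \le 2\eps_j\nbr{y-x} \le \tfrac15\nbr{v_j}\nbr{y-x}$.

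Now decompose $y - x$ into its component along $e = v_j/\nbr{v_j}$ and its component in the hyperplane $e^\perp$. The inequality just obtained says that the $e$-component is at most $\tfrac15\nbr{y-x}$. It follows that the $e$-component is bounded by a constant times the $e^\perp$-component. Consequently the orthogonal projection onto $e^\perp$ is injective on $S_{j,k,m}$, and its inverse is Lipschitz. So $S_{j,k,m}$ is the graph of a Lipschitz function over a subset of $e^\perp$, which is $(d-1)$-dimensional, and hence has Hausdorff dimension at most $d-1$.

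\emph{Main obstacle.} The delicate point is the lack of uniformity: differentiability of $\psi$ and $\phi$ only holds pointwise, with a radius depending on the point. Indexing the decomposition by $k$, and localizing with the balls around $q_m$, converts this pointwise information into the two-point estimate of Step~2, which holds uniformly on each piece. The one-sided (supporting-hyperplane) inequalities from convexity are what make a single-sided differentiability radius at $x$ suffice, without needing any estimate at $y$. Finally, $S_t$ is a countable union of the sets $S_{j,k,m}$, so countable stability of Hausdorff dimension concludes the proof.
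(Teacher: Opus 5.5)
Your proof is correct, but it takes a different route from the paper's. The paper quotes McCann's implicit function theorem \citep[Theorem~17 and Corollary~19]{McCann1995}. For each point $x$ of the set, it gives an open neighbourhood $N_x$ in which the \emph{whole} level set $\cbr{\psi-\phi=t}$ has finite $(d-1)$-dimensional Hausdorff measure. A Lindel\"{o}f argument then reduces to countably many such neighbourhoods. Controlling the entire level set near $x$ needs convexity of $\psi$ and $\phi$ on a neighbourhood, not just their first-order behaviour at $x$.

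You never look at the level set outside the set itself. You index over an approximate gradient direction $v_j$, a common differentiability radius $1/k$ and a localizing ball. On each piece this gives the uniform two-point cone estimate $\abr{\inpr{e,y-x}}\le\tfrac15\nbr{y-x}$, which makes the piece a Lipschitz graph over $e^\perp$. Countability is built into the indexing, so no Lindel\"{o}f step is needed. The argument is self-contained.

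It is also more general than your closing remark suggests. The supporting-hyperplane inequality only improves a constant: the two-sided Taylor bounds at $x$ alone give $\abr{\inpr{\nabla\psi(x)-\nabla\phi(x),y-x}}\le 2\eps_j\nbr{y-x}$, and the cone condition still follows. So convexity is used only to identify ``$\partial\psi(x)$ is a singleton'' with differentiability. Your decomposition in fact shows that $\cbr{x \mid f(x)=t,\ f \text{ differentiable at } x,\ \nabla f(x)\neq 0}$ has Hausdorff dimension at most $d-1$ for an arbitrary function $f$. What the paper's route buys is the stronger local statement about the full level set near each point, which this lemma does not need.

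A minor point: the implicit function theorem in \citet{McCann1995} is Theorem~17. Lemma~13 there is Aleksandrov's lemma.
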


\begin{proof}
	Let $X$ be the set in \cref{eq:smallset}. By McCann's Implicit Function Theorem \citep[Theorem~17 and Corollary~19]{McCann1995}, every point $x$ in $X$ has an open neighbourhood $N_{x}$ such that the $(d-1)$-dimensional Hausdorff measure of the set
	\[
		\cbr{ y \in N_{x} \mid \psi(y)-\phi(y) = t }
	\] 
	is finite. The collection of sets $\{N_{x} \mid x \in X\}$ forms an open cover of $X$. By the Lindel\"{o}f property of $\Rd$, there exists a countable subcover, i.e., a countable set $Q \subset X$ such that $X$ is a subset of $\bigcup_{x \in Q}N_{x}$ and then also of 
	\[ 
		\textstyle\bigcup_{x \in Q} \cbr{y \in N_{x} \mid \psi(y)-\phi(y) = t}. 
	\]
	The latter set is a countable union of sets of Hausdorff dimension at most $d-1$ and has therefore Hausdorff dimension at most $d-1$ as well. 
\end{proof}

The convex hull of a function $g : \Rd \to \reals \cup \{\infty\}$ is the function $\conv g$ with epigraph $\epi(\conv g) = \conv(\epi g)$, the convex hull of the epigraph of $g$. It is the greatest convex function majorized by $g$ \citep[p.~36]{Rockafellar1970}. The closure of a function $f$ is the function $\cl f$ whose epigraph is $\epi(\cl f) = \cl(\epi f)$; it is the same as the lower semicontinuous hull of $f$. The following lemma is used in the proof of \cref{main_thm_2}.

\begin{lemma}[Restrictions of convex functions]
	\label{lem:clconvhull}
	Let $\psi : \reals^d \to \reals \cup \{\infty\}$ be a closed convex function and let $D \subset \Rd$ be such that $D \cap \dom \psi \ne \varnothing$. Define the function $\phi$ by $\phi = \cl(\conv g)$, where $g(x) = \psi(x)$ if $x \in D$ and $g(x) = \infty$ if $x \in \Rd \sauf D$. Then $\phi$ is a closed convex function with the following properties:
	\begin{enumerate}[label=(\alph*)]
		\item $\phi \ge \psi$;
		\item $\phi(x) = \psi(x)$ for $x \in D$;
		\item $\partial \phi(x) = \{ v \in \Rd : \forall z \in D, \, \psi(z) \ge \psi(x) + \inpr{z-x,v} \} \supset \partial \psi(x)$ for $x \in  D$;
		\item $\dom \nabla \phi \subset \dom \partial \psi$;
		\item $D \cap \dom \nabla \phi \subset \dom \nabla \psi$ and $\nabla \phi(x) = \nabla \psi(x)$ for all $x \in D \cap \dom \nabla \phi$.
	\end{enumerate}
\end{lemma}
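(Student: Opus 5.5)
We prove the five assertions in order. The main tools are the definition of $\phi$ as the largest closed convex function majorized by $g$, and elementary convex analysis.

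\emph{Closedness, convexity and (a).} The function $\phi=\cl(\conv g)$ is closed and convex by construction. It is proper because $g$ is finite at some point of $D\cap\dom\psi$, and $\phi$ is bounded below by an affine minorant of $\psi$. Since $\psi\le g$ and $\psi$ is closed and convex, $\psi$ is a closed convex minorant of $g$. Hence $\psi\le\conv g$, and taking closures gives $\psi\le\phi$, which is (a).

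\emph{(b) and (c).} For $x\in D$, we have $\phi(x)\le g(x)=\psi(x)$, and (a) gives the reverse inequality, so (b) holds. For (c), fix $x\in D$ and suppose $v$ satisfies $\psi(z)\ge\psi(x)+\inpr{z-x,v}$ for all $z\in D$. Then the affine function $\ell(z)=\psi(x)+\inpr{z-x,v}$ is majorized by $g$ everywhere, since $g=\infty$ off $D$. Being closed and convex, $\ell$ is also majorized by $\phi$. Thus $\phi(z)\ge\phi(x)+\inpr{z-x,v}$ for all $z$, using $\phi(x)=\psi(x)$, and so $v\in\partial\phi(x)$. Conversely, if $v\in\partial\phi(x)$, then restricting the subgradient inequality to $z\in D$ and using (b) gives the stated condition. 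The inclusion $\partial\psi(x)\subset\partial\phi(x)$ is immediate, since the condition is weaker than the full subgradient inequality for $\psi$.

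\emph{(d).} This is the step I expect to be the main obstacle. Let $x\in\dom\nabla\phi$. Then $x\in\intr\dom\phi$ by \eqref{eq:domains}, and since $\phi\ge\psi$, also $x\in\intr\dom\psi$, where $\partial\psi(x)\neq\emptyset$. So (d) is immediate from the domain inclusions. The real issue is (e).

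\emph{(e).} Let $x\in D\cap\dom\nabla\phi$. By (c), $\partial\psi(x)\subset\partial\phi(x)=\{\nabla\phi(x)\}$. By (d), $\partial\psi(x)$ is nonempty, so $\partial\psi(x)=\{\nabla\phi(x)\}$. A singleton subdifferential is equivalent to differentiability, as recalled in the appendix. Hence $x\in\dom\nabla\psi$ and $\nabla\psi(x)=\nabla\phi(x)$.

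The only delicate point is that (d) relies on the interior-of-domain characterization in \eqref{eq:domains}. Everything else follows from the maximality of $\cl\conv g$ among closed convex minorants of $g$.
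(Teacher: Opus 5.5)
Your proof is correct and follows the paper's argument step for step: the paper phrases (a) and (c) through epigraph inclusions ($\epi\psi$, respectively the closed half-space $\{(z,\lambda):\lambda\ge\psi(x)+\inpr{z-x,v}\}$, contains $\epi g$ and hence $\cl(\conv(\epi g))=\epi\phi$), which is exactly your \emph{greatest closed convex minorant} argument, and your (d)--(e) match the paper's. In (d), your wording ($x\in\intr\dom\psi$, where $\partial\psi(x)\neq\emptyset$) is slightly more accurate than the paper's display, which writes $\intr(\dom\psi)=\dom\partial\psi$ although in general only the inclusion $\intr(\dom\psi)\subset\dom\partial\psi$ holds.
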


\begin{proof}
	Since $\epi \phi = \cl(\epi (\conv g)) = \cl(\conv (\epi g))$, the epigraph of $\phi$ is closed and convex. Hence $\phi$ is a closed and convex function.  
		
	(a) As $\epi g = \epi \psi \cap (D \times \reals)$, we have $\epi \psi \supset \epi g$ and thus, since $\epi \psi$ is convex and closed, $\epi \psi \supset \conv (\epi g)$ and then $\epi \psi \supset \cl(\conv(\epi g)) = \epi \phi$. Hence $\psi \le \phi$.
	
	(b) Since $\epi \phi \supset \epi g$, we must have $\phi(x) \le g(x) = \psi(x)$ for $x \in D$, and thus $\phi(x) = \psi(x)$ for such $x$.
	
	(c) Let $x \in D$ and write $V = \{ v \in \Rd : \forall z \in D, \, \psi(z) \ge \psi(x) + \inpr{z-x,v} \}$.
	By definition of the subdifferential, $V \supset \partial \psi(x)$. Likewise, in view of (b), also $V \supset \partial \phi(x)$. We need to show that $V \subset \partial \phi(x)$. Let $v \in V$. By definition, 
	\[
		\epi g 
		= \epi \psi \cap (D \times \reals) 
		\subset \{ (z, \lambda) \in \Rd \times \reals : \lambda \ge \psi(x) + \inpr{z-x,v} \}.
	\]
	The set on the right-hand side is a closed half-space in $\reals^{d+1}$. Therefore, it also contains 
	$\cl(\conv(\epi g)) = \epi \phi$.
	But that means that $\phi(z) \ge \psi(x) + \inpr{z-x,v}$ for all $z \in \Rd$. As $\psi(x) = \phi(x)$, we find that $v \in \partial \phi(x)$. 
	Therefore, $V \subset \partial \phi(x)$ as required.
	
	(d) From $\psi \le \phi$ we infer $\dom \phi \subset \dom \psi$. \mdf~{Therefore, by \eqref{eq:domains},}
	\[
		\dom \nabla \phi
		\subset \intr (\dom \phi)
		\subset \intr (\dom \psi)
		= \dom \partial \psi.
	\]
	
	(e) Let $x \in D \cap \dom \nabla \phi$ and write $\nabla \phi(x) = y$. On the one hand, by (d), $x \in \dom \partial \psi$ and thus $\partial \psi(x) \ne \varnothing$. On the other hand, by (c), $\partial \psi(x) \subset \partial \phi(x) = \{y\}$. We conclude that $\partial \psi(x) = \{y\}$, so that $x \in \dom \nabla \psi$ and $\nabla \psi(x) = y = \nabla \phi(x)$.
\end{proof}

\section{Zero-couplings and their support}
\label{section:seq_zcoup}


This section relies heavily on the work in \citet{segers2022} on the convergence of sequences of couplings $\pi_n\in\Pi(P_n,Q_n)$ where $P_n,Q_n$ are sequences of probability measures on $\Rd$, and the convergence of closed cyclically monotone sets containing their supports, i.e., $T_n\in\Fell(\RdRd)$ satisfying $\spt\pi_n \subset T_n$.
Although being formulated for probability measures, most of the results in \cite{segers2022} are in fact given with a demonstration that extends straightforwardly to $\Mzp$.
Indeed, thanks to Theorem~2.7 (Prohorov) and Theorem~2.4 (Portmanteau) for $\Mzp$ (which are quite different from the ones in the set of probability measures, see Section~\ref{section:Mzp}) in \cite{hult2006regular}, most of the arguments can be reused almost directly. In what follows, we adapt the results given in \cite{segers2022} to our context.

\begin{lemma}
\label{lm:varphi}
The maps $\varphi_1,\varphi_2$ in \cref{def:zero-marginals} are continuous as functions $\Mzp(\RdRd) \to \Mzp(\Rd)$.
\end{lemma}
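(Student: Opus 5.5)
To show that $\varphi_1$ is continuous (the case of $\varphi_2$ is symmetric, with the roles of the two coordinates exchanged), it suffices to prove sequential continuity. This is enough because $\Mzp(\RdRd)$ and $\Mzp(\Rd)$ are metrizable; indeed they are Polish by \citep[Theorem~2.3]{hult2006regular}. The plan is to realize $\varphi_1$ as integration against composed test functions and apply the definition of $\Mz$-convergence directly.

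Let $\gamma_n \Mto \gamma$ in $\Mzp(\RdRd)$ and fix $f \in \Cbz(\Rd)$, which vanishes on $\ball_d(0,r)$ for some $r>0$. Define $g = f\circ\proj_1$ on $\RdRdmz$, that is, $g(x,y) = f(x)$. We need three properties of $g$.
\begin{itemize}
\item $g$ is bounded and continuous on $\RdRdmz$.
\item $g$ vanishes on $\ball_{2d}(0,r)\sauf\cbr{0}$, because $\nbr{(x,y)}<r$ implies $\nbr{x}<r$. Hence $g \in \Cbz(\RdRd)$.
\item By \cref{def:zero-marginals} and the standard extension from indicators to simple functions and then to bounded measurable functions, $\int g\,\diff\zeta = \int f\,\diff(\varphi_1\zeta)$ for every $\zeta\in\Mzp(\RdRd)$.
\end{itemize}
For the third property we need finiteness, and it holds: $f$ is supported in $\Rd\sauf\ball_d(0,r)$, and $\varphi_1\zeta(\Rd\sauf\ball_d(0,r)) = \zeta((\Rd\sauf\ball_d(0,r))\times\Rd)<\infty$ because this set is bounded away from the origin in $\RdRd$.

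With these properties the convergence follows at once:
\[
    \varphi_1\gamma_n(f) = \gamma_n(g) \to \gamma(g) = \varphi_1\gamma(f).
\]
Since $f\in\Cbz(\Rd)$ was arbitrary, $\varphi_1\gamma_n \Mto \varphi_1\gamma$. We also need $\varphi_1$ to map $\Mzp(\RdRd)$ into $\Mzp(\Rd)$, which was already observed after \cref{def:zero-marginals}.

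The only step needing care is the identity between integrals. Both sides are finite measures once restricted to the set where $f\ne0$, so the monotone class argument applies there without trouble. I do not expect any genuine obstacle.
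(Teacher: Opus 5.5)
Your proof is correct, but it takes a different route from the paper's. You work directly with the defining test functions. Composing with $\proj_1$ sends $\Cbz(\Rd)$ into $\Cbz(\RdRd)$, once you set $f(0)=0$. That extension is harmless because $f$ vanishes near the origin, but state it explicitly, since $g(0,y)$ with $y\neq 0$ must be defined. The push-forward identity $\zeta(f\circ\proj_1)=\varphi_1\zeta(f)$ then shows that $\zeta\mapsto\varphi_1\zeta(f)$ is itself one of the evaluation functionals generating the topology of $\Mz(\RdRd)$.

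The paper instead argues with sets. For $A$ bounded away from the origin, it applies the Portmanteau theorem on $\RdRd$ to the open and closed cylinders $\intr(A)\times\Rd$ and $\cl(A)\times\Rd$. It rewrites the resulting $\liminf$/$\limsup$ inequalities as inequalities for $\varphi_1\gamma_n$ on $\intr(A)$ and $\cl(A)$, and then uses the converse direction of the Portmanteau theorem on $\Rd$.

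Your argument is more elementary: it uses only the definition of the topology and a change of variables, and needs neither direction of the Portmanteau theorem. It also makes the reduction to sequences unnecessary. Each map $\zeta\mapsto\varphi_1\zeta(f)$ is continuous on $\Mz(\RdRd)$ by definition, so $\varphi_1$ is continuous into $\Mz(\Rd)$ with its initial topology directly. Your appeal to metrizability is therefore superfluous, though not wrong. The paper's set-based route matches the set-wise definition of zero-marginals and avoids the integral identity, at the price of relying on the full Portmanteau equivalence.
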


\begin{proof}[Proof of Lemma~\ref{lm:varphi}]
    We only treat $\varphi_1$, as $\varphi_2$ can be treated in exactly the same way.
    It suffices to check that $\gamma_n \Mto \gamma$ in $\Mzp(\RdRd)$ implies $\varphi_1 \gamma_n \Mto \varphi_1 \gamma$ in $\Mzp(\RdRd)$ as $n\to\infty$.
    To do so, we consider the Portmanteau Theorem~2.1(iii) in \citet{lindskog2014regular} (see \cref{thm:portmanteau}) and apply it twice.
    Let $A\subset\Rdmz$ be bounded away from the origin. Then $\intr(A)$, $\intr(A)\times\Rd$ and $\cl(A)$, $\cl(A)\times\Rd$ are open and closeds set, respectively, and bounded away from the origin in $\Rd$ or $\RdRd$. 
    Since $\gamma_n \Mto \gamma$ as $n\to\infty$ in $\Mzp(\RdRd)$, we have
    \[ 
        \gamma \brbr{\intr(A)\times\Rd}
        \le \liminf_{n\to\infty} \gamma_n\brbr{\intr(A)\times\Rd}
        \le \limsup_{n\to\infty} \gamma_n\brbr{\cl(A)\times\Rd}
        \le \gamma\brbr{\cl(A)\times\Rd},
    \]
    which we can rewrite as
    \[ 
        (\varphi_1\gamma)(\intr(A)) 
        \le \liminf_{n\to\infty} (\varphi_1\gamma_n)(\intr(A))
        \le \limsup_{n\to\infty} (\varphi_1\gamma_n)(\cl(A)) 
        \le (\varphi_1\gamma)(\cl(A)). 
    \]
    It follows that $\varphi_1\gamma_n \Mto \varphi_1\gamma$ as $n\to\infty$, as required.
\end{proof}

For our purpose, the main consequence of \cref{lem:sptlsc} below is that if a sequence $(\gamma_n,T_n)\in\zcoup(\mu_n,\nu_n)\times\Fell(\RdRd)$ satisfying $\spt\gamma_n\subset T_n$ converges in the product topology to some $(\gamma,T)$, then the limit pair satisfies $\spt\gamma\subset T$.
It is adapted from Lemma~D.11 (Lower semi-continuity of the support map) in \cite{segers2022}. The $\liminf$ of a sequence $F_n$ in $\Fell(\E)$ is defined as the set of points $x \in \E$ such that there exist $x_n \in F_n$ such that $x_n \to x$ in $\E$; the set $\liminf_n F_n$ is necessarily closed in $\E$ too.
We recall that a map $f$ from a topological space $\mathbb D$ into $\Fell(\E)$ is lower semi-continuous if $f^{-1}(\Fell_G)$ is open for any open set $G$ in $\E$, where $\Fell_G = \cbr{F \in \Fell(\E) : F \cap G \neq \emptyset}$; which, in the case where the space $\mathbb D$ is metrizable with metric $\rho$ and $\E$ is a locally compact Hausdorff topological space (LCHS), is equivalent to $f(d) \subset \liminf_n f(d_n)$ whenever $d_n\to d$ in $\mathbb D$ \citep[][lemma D.8]{segers2022}.

\begin{lemma}[Lower semi-continuity of the support map]
		\label{lem:sptlsc}
		Let $\E$ be a non-empty open subset of $\rset^k$ for some $k\ge1$.
		If $\mu_n \Mto \mu$ in $\Mzp(\E)$---seen as a subset of $\Mzp(\Rd)$---as $n \to \infty$, then $\spt \mu \subset \liminf_{n \to \infty} \spt \mu_n$, that is, the map 
		\[ 
		\spt : \Mzp(\E) \to \Fell(\E) : \mu \mapsto \spt \mu 
		\]
		is lower semi-continuous. In particular, $\left\{ (\mu, F) \in \Mzp(\E) \times \Fell(\E) : \spt \mu \subset F \right\}$ is closed in the product topology.
	\end{lemma}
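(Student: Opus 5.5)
We will show directly that $\spt\mu \subset \liminf_{n\to\infty}\spt\mu_n$, and then deduce the closedness statement.

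\emph{Main inclusion.} Let $x \in \spt\mu$; recall that $\spt\mu$ is the support of the extension $\tilde\mu$, so $x$ may be the origin. We must produce $x_n \in \spt\mu_n$ with $x_n \to x$. It suffices to prove that for every $r>0$ we have $\ball(x,r)\cap\spt\mu_n \ne \emptyset$ for all large $n$. A diagonal choice along $r = 1/k$ then yields the required sequence $x_n$, since the sets $\spt \mu_n$ are eventually nonempty.

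\emph{Case $x \ne 0$.} Fix $r$ with $0<r<\nbr{x}/2$, so that $\ball(x,r)$ is open and bounded away from $0$. Since $x\in\spt\mu$, we have $\mu(\ball(x,r))>0$. By the Portmanteau Theorem~\ref{thm:portmanteau}(iii) applied to $A=\ball(x,r)$,
\[
0<\mu(\ball(x,r))\le\liminf_{n\to\infty}\mu_n(\ball(x,r)).
\]
Hence $\mu_n(\ball(x,r))>0$ for all large $n$. For a measure on the separable space $\E$, the complement of the support is null, so $\ball(x,r)$ must meet $\spt\mu_n$.

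\emph{Case $x = 0$.} Here the Portmanteau theorem does not apply to a ball around the origin. However, $0\in\spt\mu$ means $\mu(\ball(0,r)\sauf\{0\})>0$ for every $r>0$. By countable additivity, $\mu$ then charges some annulus $\{r'<\nbr{y}<r\}$. Since $\mu$ is not zero near $0$ but is finite away from $0$, there is a point $y\ne0$ with $\nbr{y}<r$ and $y\in\spt\mu$. Applying the previous case to $y$ with a small radius shows that $\ball(0,r)\cap\spt\mu_n\ne\emptyset$ for all large $n$. The same reduction shows that, when the subset $\E$ is open and $\mu$ is viewed in $\Mzp(\Rd)$, support points on the boundary are approximated by interior ones. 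This bookkeeping near $0$ and near the boundary of $\E$ is the only delicate part of the argument.

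\emph{Closedness statement.} Lower semi-continuity in the Fell sense follows from the sequential criterion \citep[Lemma~D.8]{segers2022}, since $\Mzp$ is metrizable (it is Polish) and $\E$ is LCHS. For the closedness claim, let $(\mu_n,F_n)\to(\mu,F)$ with $\spt\mu_n\subset F_n$. Then
\[
\spt\mu\subset\liminf_{n\to\infty}\spt\mu_n\subset\liminf_{n\to\infty}F_n .
\]
Fell convergence gives $\liminf_{n\to\infty} F_n = F$, because in an LCHS Fell convergence coincides with Painlevé--Kuratowski convergence. Hence $\spt\mu\subset F$, which proves that the set in question is closed.
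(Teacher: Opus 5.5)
Your proof is correct and follows essentially the paper's route. In both arguments the key step is the Portmanteau lower bound $\mu(G)\le\liminf_{n}\mu_n(G)$ for an open set $G$ bounded away from the origin that meets $\spt\mu$. The origin is handled in both by passing to a nearby non-zero point of $\spt\mu$; your annulus argument makes this step more explicit than the paper's appeal to $\spt\mu\neq\{0\}$. Closedness then follows from the Kuratowski description of Fell convergence, which is what Lemma~D.8(iv) of \cite{segers2022} encodes.

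Your remark about support points on the boundary of $\E$ is unnecessary. Since $\spt\mu$ is a closed subset of the open set $\E$, you can simply take $r$ small enough that $\ball(x,r)\subset\E$, or intersect the ball with $\E$.
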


\begin{proof}[Proof of Lemma~\ref{lem:sptlsc}]
    The proof is adapted from the one of Lemma~D.11 in \cite{segers2022}, with minor changes to deal with the $\Mzp$ space.
	Let $G \subset \E$ be open and suppose that $\spt\mu \cap G \neq \emptyset$. By Lemma~D.8(ii) in \cite{segers2022} we need to show that $\spt(\mu_n) \cap G \neq \emptyset$ for all large $n$. For $\eps>0$ small enough, the set $G_\eps = G \sauf \cl \ball_{0,\eps}$ is a non-empty open set bounded away from $0$.
    Since $\mu\in\Mzp(\E)\subset\Mzp(\Rd)$ we have $\spt\mu\neq\{0\}$ as $\spt\mu$ is by definition equal to $\spt\tilde\mu$ and $\tilde\mu(\{0\})=0$, so, decreasing $\eps$ if needed, we can also suppose $G_\eps \cap \spt\mu\neq\emptyset$.
    It follows from the Portmanteau lemma for $\Mzp$-convergence (\cref{thm:portmanteau}, see Theorem~2.1 in \cite{lindskog2014regular}) that 
    \[ 
        \liminf_{n \to \infty} \mu_n(G) 
        \ge \liminf_{n \to \infty} \mu_n(G_\eps) 
        \ge \mu(G_\eps)>0. 
    \] 
    Since $\spt(\mu_{(n)}) \cap G \neq \emptyset$ if and only if $\mu_{(n)}(G) > 0$ (since $G$ is open), the first part of the Lemma is proved.
	The set $\{(\mu, F) : \spt \mu \subset F\}$ is closed in view of Lemma D.8(iv) in \cite{segers2022} since $\E$ is a LCHS space.
\end{proof}

The following result is adapted from Lemma~4.1 in \cite{segers2022} and will be useful when dealing with converging sequences. Let $\Mzpcm(\RdRd)$ denote the subset of $\Mzp(\RdRd)$ of measures whose support is cyclically monotone in the sense of \cref{def:monotonicity}. Recall that $\Fellcm$ denotes the collection of closed subsets of $\RdRd$ that are cyclically monotone.

\begin{lemma}[Closure and compactness properties of sets of zero-coupling measures]
    \label{segers2022:lemma4.1} \mbox{}

\begin{enumerate}[label=(\alph*)]
\item For $\mu,\nu\in\Mzp(\Rd)$, the sets $\Mzpcm(\RdRd)$ and $\zcmcoup(\mu,\nu)$ are closed in $\Mzp(\RdRd)$.

\item The sets 
\begin{align*}
	&\left\{ (\mu,\nu,\gamma ) \mid \mu,\nu\in\Mzp(\Rd) ,\gamma \in \zcoup(\mu,\nu) \right\} \qquad \text{and} \\
	&\left\{ (\mu,\nu,\gamma ) \mid \mu,\nu\in\Mzp(\Rd),\gamma \in \zcmcoup(\mu,\nu) \right\}
\end{align*}
are closed in $\Mzp(\Rd)\times \Mzp(\Rd)\times \Mzp(\RdRd)$.

\item If $K,L\subset\Mzp(\Rd)$ are compact, then 
\begin{equation}
\label{eq:KLzcoup}
	\left\{ 
		(\mu,\nu,\gamma ) \mid
		\mu\in K, \nu\in L, \gamma \in \zcoup(\mu,\nu) 
	\right\}
\end{equation}
is compact too.
\end{enumerate}
\end{lemma}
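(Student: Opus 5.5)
We prove (a)--(c) in turn, following the template of Lemma~4.1 in \citet{segers2022}. The tools are the continuity of the zero-marginal maps (\cref{lm:varphi}), the lower semi-continuity of the support map (\cref{lem:sptlsc}), and the Prohorov theorem for $\Mzp$ (\cref{thm:prohorov_Mzp}). Since $\Mzp$ is Polish, closedness and compactness can be checked along sequences throughout.

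\emph{(a)} Let $\gamma_n\in\Mzpcm(\RdRd)$ with $\gamma_n\Mto\gamma$. By \cref{lem:sptlsc}, every point of $\spt\gamma$ is a limit of points $(x_n,y_n)\in\spt\gamma_n$. Fix finitely many points $(x_1,y_1),\dots,(x_m,y_m)\in\spt\gamma$. Approximate each by points of $\spt\gamma_n$. The cyclical monotonicity inequality \cref{rel:cyclical_mononicity0} holds at stage $n$, and it involves only finitely many continuous terms, so it passes to the limit. Hence $\spt\gamma$ is cyclically monotone and $\Mzpcm$ is closed. Next, $\zcoup(\mu,\nu)=\varphi_1^{-1}(\mu)\cap\varphi_2^{-1}(\nu)$ is closed by \cref{lm:varphi}. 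Therefore $\zcmcoup(\mu,\nu)=\zcoup(\mu,\nu)\cap\Mzpcm(\RdRd)$ is closed.

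\emph{(b)} The first set is the set of triples with $\varphi_1\gamma=\mu$ and $\varphi_2\gamma=\nu$. It is the preimage of the diagonal under the continuous map $(\mu,\nu,\gamma)\mapsto(\mu,\nu,\varphi_1\gamma,\varphi_2\gamma)$ (continuity by \cref{lm:varphi}). Since $\Mzp$ is Hausdorff, the diagonal is closed, so this set is closed. The second set is the intersection of the first with $\Mzp(\Rd)\times\Mzp(\Rd)\times\Mzpcm(\RdRd)$, which is closed by (a).

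\emph{(c)} This is the main point. The set in \cref{eq:KLzcoup} is closed by (b), so it suffices to show that its projection $\Gamma$ onto the third coordinate is relatively compact in $\Mzp(\RdRd)$. Then the set lies in the compact $K\times L\times\cl\Gamma$. We verify the criterion of \cref{thm:prohorov_Mzp} for $\Gamma$.

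Since $K$ and $L$ are compact, the same theorem applied to $K\cup L$ gives radii $r_i\downarrow0$ and compacts $K_i\subset\Rd\sauf\ball_{0,r_i}$ with uniform mass and tail bounds. For $\Gamma$, take radii $\rho_i=\sqrt2\,r_i$. The inclusion in the proof of \cref{lm:zcoup_in_Mzp} gives
\[
\sup_{\gamma\in\Gamma}\gamma\bigl(\RdRd\sauf\ball_{2d}(0,\rho_i)\bigr)\le\sup_{K}\mu(\Rd\sauf\ball_{0,r_i})+\sup_{L}\nu(\Rd\sauf\ball_{0,r_i})<\infty.
\]
For the tails, fix $\eta>0$. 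Choose a large closed ball $C=\cl\ball_d(0,R)$ such that every $\mu\in K$ and every $\nu\in L$ gives mass less than $\eta/2$ to $\Rd\sauf(\ball_{0,r_i}\cup C)$. Then set
\[
\tilde K_i=(C\times C)\sauf\ball_{2d}(0,\rho_i),
\]
which is a compact set. A point outside $\ball_{2d}(0,\rho_i)\cup\tilde K_i$ has at least one coordinate outside $C$. That coordinate has norm greater than $R$, which we may take larger than $r_i$, so it also lies outside $\ball_{0,r_i}$. The $\gamma$-mass of such points is therefore bounded by the corresponding $\mu$-mass plus $\nu$-mass, which is less than $\eta$.

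The step I expect to need care is exactly this bookkeeping near the origin: the marginal bounds only control sets bounded away from zero in one coordinate, so the inclusions must be arranged so that each bad region is bounded away from the origin in the relevant coordinate.
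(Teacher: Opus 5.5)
Your proof is correct and follows the paper's argument essentially step for step. Closedness of $\zcoup(\mu,\nu)$ comes from continuity of the zero-marginal maps, closedness of $\Mzpcm(\RdRd)$ from lower semi-continuity of the support, and relative compactness in (c) from the $\Mzp$-Prohorov criterion with radii $\sqrt{2}\,r_i$ and a product compact. The differences are cosmetic: you pass the cyclical monotonicity inequalities to the limit along approximating support points instead of using the hit-and-miss description of the complement of $\Fellcm$, and you apply Prohorov to $K\cup L$ at once with a large closed ball instead of the paper's $C_i\cup\cl\ball_d(0,r_i)$.
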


\begin{proof}[Proof of Lemma~\ref{segers2022:lemma4.1}]
We deal with the different assertions separately.
\smallskip

\noindent\emph{Proof of (a).}
As $\zcmcoup(\mu,\nu)=\zcoup(\mu,\nu)\cap\Mzpcm(\RdRd)$, it suffices to show that $\zcoup(\mu,\nu)$ and $\Mzpcm(\RdRd)$ are closed.

According to \cref{lm:varphi}, the maps $\varphi_1,\varphi_2 : \Mz(\RdRd) \to \Mz(\Rd)$ sending a measure in $\Mz(\RdRd)$ to its zero-marginals are continuous. As a consequence, $\zcoup(\mu,\nu) = \varphi_1^{-1}(\cbr{\mu}) \cap \varphi_2^{-1}(\cbr{\nu})$ is closed.

Next, we have $\Mzpcm = \spt^{-1}(\Fellcm)$ where the map
$\spt : \Mzp(\RdRd) \to \Fell(\RdRd)$ sends a measure to its
support.  The set $\spt^{-1}(\Fellcm)$ is closed by lower semi-continuity of $\spt$
(Lemma~\ref{lem:sptlsc}) and the fact that the complement of $\Fellcm$
can be written as a union over finite intersections of sets of the
form $\Fell_{G}=\{ F\in\Fell(\RdRd) \mid F\cap G \neq \emptyset\}$ for open $G \subset \Rd \times \Rd$ \cite[Lemma 3.2]{segers2022}.
\smallskip

\noindent\emph{Proof of (b).}
If $\gamma_n \in \zcoup(\mu_n, \nu_n)$ for all $n$ and if $\mu_n \Mto \mu$, $\nu_n \Mto \nu$ and $\gamma_n \Mto \gamma$, then also $\gamma \in \zcoup(\mu, \nu)$ using arguments similar to those used for (a) and relying on the continuity of $\varphi_i$. It follows that $\cbr{ (\mu,\nu,\gamma )\mid \mu,\nu\in\Mzp ,\gamma \in \zcoup(\mu,\nu) }$ is closed in $\Mzp(\Rd)\times \Mzp(\Rd)\times \Mzp(\RdRd)$. The intersection of the former set with the closed set $\Mzp(\Rd)\times \Mzp(\Rd)\times \Mzpcm(\RdRd)$ is the set
\[ 
    \cbr{ 
        (\mu,\nu,\gamma ) \mid 
        \mu,\nu\in\Mzp ,\gamma \in \zcmcoup(\mu,\nu) 
    },
\] 
which is thus closed as well.
\smallskip

\noindent\emph{Proof of (c).}
Let $M=\bigcup_{\mu\in K, \nu\in L} \zcoup(\mu,\nu)$.
The set 
\[
	\cbr{ (\mu,\nu,\gamma )\mid \mu\in K, \nu\in L, \gamma \in \zcoup(\mu,\nu) } 
	= \cbr{ (\mu,\nu,\gamma )\mid \gamma \in \zcoup(\mu,\nu) } \cap \brbr{ K\times L\times \Mzp(\RdRd) }
\]
is closed by (b) and contained in $K\times L\times M$. Since $K,L$ are compact subsets of $\Mzp(\Rd)$, it suffices to prove that $M$ is relatively compact in $\Mzp(\RdRd)$ to conclude that $K\times L\times M$ is relatively compact, yielding the desired compactness of the set of interest.

Let $\epsilon >0$. Since $K$ and $L$ are compact in $\Mzp(\Rd)$, the Prohorov-like characterization of relative compactness in $\Mzp(\Rd)$ stated in \cref{thm:prohorov_Mzp} \citep[Theorem~2.7]{hult2006regular} yields the existence of two positive sequences $(r_{i}^{K})_{,i\ge1}$ and $(r_{i}^{L})_{i\ge1}$ decreasing to zero, so that for each $i\ge1$ there exist compact sets $C_i^K \subset \Rd\sauf \ball(0,r_{i}^{K})$ and $C_i^L \subset \Rd\sauf \ball(0,r_{i}^{L})$ such that 
\begin{align*}
    \sup_{\mu\in K} \mu \brbr{\Rd \sauf \ball(0,r_{i}^K)}
    &< \infty, &
    \sup_{\mu \in K} \mu\brbr{\Rd \sauf (\ball(0,r_{i}^K) \cup C_i^K )}
    &\le \epsilon / 2, \\
    \sup_{\nu\in L} \nu \brbr{\Rd \sauf \ball(0,r_{i}^L)}
    &< \infty, &
    \sup_{\nu \in L} \nu\brbr{\Rd \sauf (\ball(0,r_{i}^L) \cup C_i^L )} 
    &\le \epsilon / 2.
\end{align*}
It is easy to see that we can choose common sequences of numbers and sets by setting
\begin{align*}
    r_i &= \max(r_i^K, r_i^L), &
    C_i &= (C_i^K \cup C_i^L) \sauf \ball(0, r_i),
\end{align*}
to satisfy the above inequalities for $\mu$ and $\nu$ simultaneously.


For the Euclidean norm, $\nbr{(x,y)} \ge \sqrt{2} r$ implies $\max(\nbr{x}, \nbr{y}) \ge r$, where $(x,y) \in \RdRd$ and $r > 0$. As a consequence, we have, indicating the dimensions of the balls by a subscript,
\[
    \reals^{2d} \sauf \ball_{2d}(0, \sqrt{2}r_i)
    \subset 
    \brbr{(\Rd \sauf \ball_d(0,r_i)) \times \Rd} \cup \brbr{\Rd \times (\Rd \sauf \ball_d(0,r_i))},
\]
and thus, for every $i \ge 1$,
\[
    \sup_{\gamma \in M} \gamma \brbr{\reals^{2d} \sauf \ball_{2d}(0, \sqrt{2}r_i)}
    \le 
    \sup_{\mu \in K} \mu\brbr{\Rd \sauf \ball_d(0,r_i)} + 
    \sup_{\nu \in L} \nu\brbr{\Rd \sauf \ball_d(0,r_i)}
    < \infty.
\]
Finally, define the compact set
\[
    K_i = \brbr{C_i \cup \cl \ball_d(0,r_i)}^2 \sauf \ball_{2d}(0, \sqrt{2}r_i),
\]
which is obviously contained in $\reals^{2d} \sauf \ball_{2d}(0, \sqrt{2}r_i)$.
For all $(x, y) \in \reals^{2d}$, we have
\begin{align*}
    \lefteqn{
    (x,y) \notin K_i \cup \ball_{2d}(0, \sqrt{2}r_i)
    } \\
    &\implies (x,y) \notin \brbr{C_i \cup \cl \ball_d(0,r_i)}^2 \\
    &\implies x \notin C_i \cup \cl \ball_d(0,r_i) \text{ or } y \notin C_i \cup \cl \ball_d(0,r_i) \\
    &\implies x \notin C_i \cup \ball_d(0,r_i) \text{ or } y \notin C_i \cup \ball_d(0,r_i),
\end{align*}
whence, in $\reals^{2d}$,
\[
    [K_i \cup \ball_{2d}(0, \sqrt{2}r_i)]^c
    \subset 
    [(C_i \cup \ball_d(0,r_i))^c \times \Rd] \cup
    [\Rd \times (C_i \cup \ball_d(0,r_i))^c].
\]
It follows that
\begin{align*}
    \sup_{\gamma \in M} \gamma\brbr{[K_i \cup \ball_{2d}(0, \sqrt{2}r_i)]^c}
    &\le \sup_{\mu \in K} \mu\brbr{[C_i \cup \ball_d(0,r_i)]^c}
    + \sup_{\nu \in L} \nu\brbr{[C_i \cup \ball_d(0,r_i)]^c} \\
    &\le \eps/2 + \eps/2 = \eps.
\end{align*}
Since $\eps > 0$ was arbitrary, we have shown that the set $M$ satisfies the two criteria in \cref{thm:prohorov_Mzp}, confirming that $M$ is relatively compact, as required.
\end{proof}

The following lemma is similar to Lemma~C.1 in \cite{segers2022} which concerns probability measures. Recall that $\varphi_1\gamma$ and $\varphi_2\gamma$ are the zero-marginals of $\gamma\in\Mzp(\Rd)$ while $\phi_1\gamma$ and $\phi_2\gamma$ are its ordinary marginals  (\cref{def:zero-coupling} and \cref{rkdef:couplings}). Let $\proj_1 : \RdRd \to \Rd$ be the projection mapping $(x,y) \mapsto x$. Seeing a subset $T$ of $\RdRd$ as (the graph of) a multivalued mapping from $\Rd$ into $\Rd$, we also write $\dom T = \proj_1(T) = \cbr{x \in \Rd \mid \exists y \in \Rd : (x,y) \in T}$.


\begin{lemma}[Support of a zero-margin]
	\label{lem:supp-zero-proj}
	Let $\gamma \in \Mzp(\RdRd)$ and let $\mu = \varphi_1\gamma\in\Mzp(\Rd)$ denote its left zero-marginal. Then $\spt\mu \subset\spt\phi_1\gamma= \cl (\proj_1 (\supp \gamma))$. 
	As a consequence, if $\supp \gamma \subset T$ for some $T \subset \RdRd$, then $\spt \mu \subset \cl(\dom T)$.
	In particular, if $T$ is maximal monotone, then $\intr(\supp \mu) \subset \dom T$.
\end{lemma}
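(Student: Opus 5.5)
The plan is to prove the three assertions of \cref{lem:supp-zero-proj} in order, working with the extension $\tilde\gamma$ of $\gamma$ to $\RdRd$ determined by $\tilde\gamma(\cbr{0})=0$. Its ordinary first marginal $\phi_1\gamma$ is then a Borel measure on $\Rd$ whose restriction to $\Rdmz$ is $\mu$. It may, however, put infinite mass on $\cbr{0}$, coming from $\gamma(\cbr{0}\times\Rdmz)$.

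\emph{The chain of inclusions.} For $\spt\mu\subset\spt\phi_1\gamma$, I compare the two measures on open sets. If $U\subset\Rd$ is open, then $\tilde\mu(U)=\mu(U\sauf\cbr{0})=\tilde\gamma((U\sauf\cbr{0})\times\Rd)\le\phi_1\gamma(U)$. So any point all of whose neighbourhoods have positive $\tilde\mu$-mass also has positive $\phi_1\gamma$-mass on those neighbourhoods.

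For the equality $\spt\phi_1\gamma=\cl(\proj_1\spt\gamma)$, I adapt Lemma~C.1 of \cite{segers2022}, which asserts the same equality for probability couplings. Since $\phi_1\gamma$ may be infinite, I rewrite each direction through open sets rather than weak limits.

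First inclusion ($\supset$). Let $(x,y)\in\spt\gamma$ and let $U\ni x$ be open. Then $U\times\Rd$ is an open neighbourhood of $(x,y)$, so $\tilde\gamma(U\times\Rd)>0$. Hence $x\in\spt\phi_1\gamma$. Supports are closed, so $\cl(\proj_1\spt\gamma)\subset\spt\phi_1\gamma$.

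Second inclusion ($\subset$). Let $x\notin\cl(\proj_1\spt\gamma)$ and pick an open ball $U\ni x$ disjoint from $\proj_1\spt\gamma$. Then $U\times\Rd$ misses $\spt\tilde\gamma$. The complement of the support is $\tilde\gamma$-null: this holds by Lindelöf's property, since the complement is a countable union of open null sets. Therefore $\phi_1\gamma(U)=0$ and $x\notin\spt\phi_1\gamma$.

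The one point needing care is that $\spt\gamma$ here means $\spt\tilde\gamma$, and the origin $(0,0)$ causes no trouble because it only adds a closed point.

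\emph{Consequences.} If $\spt\gamma\subset T$, then $\proj_1\spt\gamma\subset\proj_1 T=\dom T$. Taking closures gives $\spt\mu\subset\cl(\dom T)$.

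For the last claim, let $T$ be maximal monotone. By Minty/Rockafellar--Wets (Theorem~12.41 in \cite{rockafellarwets98}), $\dom T$ is nearly convex: there is a convex set $C$ with $C\subset\dom T\subset\cl C$. Consequently $\intr\cl(\dom T)=\intr\cl C=\intr C\subset\dom T$, using that a convex set and its closure have the same interior. Since $\spt\mu\subset\cl(\dom T)$, we get $\intr(\spt\mu)\subset\intr\cl(\dom T)\subset\dom T$.

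\emph{Main obstacle.} I expect the last step to be the real content, namely invoking near-convexity of domains of maximal monotone operators. If that citation is unavailable in the needed form, I would fall back on the special case used in the paper, where $T$ is maximal cyclically monotone. Then $T=\partial\psi$ by Rockafellar's \cref{thm:rockafellar}, and \cref{eq:domains} gives $\intr\cl(\dom\partial\psi)=\intr\cl(\dom\psi)=\intr(\dom\psi)=\intr(\dom\partial\psi)\subset\dom\partial\psi$. This uses that $\dom\psi$ is convex, so its closure has the same interior.
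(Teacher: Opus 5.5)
Your proposal is correct and follows the paper's proof essentially step for step. You use the same extension $\tilde\gamma$, the same two open-set arguments for $\spt\phi_1\gamma=\cl(\proj_1(\spt\gamma))$, and the same near-convexity of $\dom T$ (Theorem~12.41 in \cite{rockafellarwets98}); the only cosmetic difference is that you conclude with $\intr\cl C=\intr C$, where the paper passes through relative interiors, $\intr\cl C\subset\ri\cl C=\ri C\subset C$.
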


\begin{proof}[Proof of Lemma~\ref{lem:supp-zero-proj}]
    We consider the extension $\tilde\gamma$ of $\gamma$ to a positive Borel measure on $\RdRd$ and write $\bar\mu=\phi_1\tilde\gamma=\phi_1\gamma$ for its left marginal. Since $\mu$ is the restriction of $\bar\mu$ to $\Rdmz$, we have $\spt \mu \subset \spt \bar\mu$.


    We show that $\spt\bar\mu$ is equal to $F = \cl( \proj_1 (\supp \tilde\gamma) )$. First, to show that $\spt \bar\mu \supset F$, it suffices to show that $\spt \bar\mu \supset \proj_1(\spt \tilde\gamma)$, as $\spt \bar\mu$ is closed. But the latter inclusion follows easily: if $x \in \proj_1(\spt \tilde\gamma)$, then there exists $y$ such that $(x, y) \in \spt \tilde\gamma$; for any open neighbourhood $U$ of $x$, the set $U \times \Rd$ is an open neighbourhood of $(x, y)$, and thus $\bar\mu(U) = \tilde\gamma(U \times \Rd) > 0$, which, as $U$ was arbitrary, implies $x \in \spt \bar\mu$.	
    	
    Second, to show that $\spt \bar\mu \subset F$, we need to show that $\bar\mu(F^c) = 0$, as $F$ is closed. We have $\bar\mu(F^c) = \tilde\gamma(F^c \times \Rd)$ and
    \[
    F^c \times \Rd
    \subset (\proj_1 (\supp \tilde\gamma))^c \times \Rd
    \subset (\supp \tilde\gamma)^c,
    \]
    which is a $\tilde\gamma$-null set.




If $\spt \gamma \subset T$ for some $T \subset \RdRd$, then $\proj_1(\spt \gamma) \subset \dom T$ and thus $\spt \bar\mu \subset \cl(\dom T)$.


Finally, assume $T$ contains $\spt \gamma$ and is maximal monotone. By Theorem~12.41 (near convexity of domains and ranges) in \cite{rockafellarwets98}, there exists a convex subset $C$ of $\Rd$ such that $C \subset \dom T \subset \cl C$. 
As $\spt \mu \subset \cl(\dom T)$, we have $\spt \mu \subset \cl C$, hence $\intr(\spt\mu) \subset \intr\cl C$. Moreover, the convexity of $C$ yields $\ri C = \ri \cl C$, where $\ri$ is the relative interior of $C$ with respect to the smallest affine subspace that contains $C$. As a consequence, we can write
\begin{align*}
	\intr(\spt\mu)
	\subset \intr\cl C 
	&\subset \ri \cl C \\
	&= \ri C 
	\subset C 
	\subset \dom T,
\end{align*} 
and the statement is proved.
\end{proof}

\begin{remark}
\label{rk:zero_margin_spt_incl}
Let $\mu,\nu\in\Mzp(\Rd)$ and $\gamma\in\zcoup(\mu,\nu)$, and write $\bar\mu = \phi_1\gamma$ for its left margin (\cref{rkdef:couplings}). Since $\mu = \res \bar\mu$ we have the inclusions
$\spt \mu \subset \spt \bar\mu \subset \spt \mu \cup \{0\}$.
It follows by \cref{lem:supp-zero-proj} that $\spt \mu \sauf \cbr{0} = \cl (\proj_1 (\spt \gamma)) \sauf \cbr{0}$.
%
\end{remark}

Recall that we write $T\llcorner V = T \cap (V \times \Rd)$ for $T \subset \RdRd$ and $V \subset \Rd$. The following lemma will be useful to prove the graphical convergence, relative to the interior $V$ of the support of $\mu$, of a sequence of maximal cyclically monotone sets $T_n\in\Fell(\RdRd)$, for $n\in\nat$, containing the support of $\gamma_n$ by allowing one to prove that the limit points of $T_n \restr V$ are all the same. It is adapted from Lemma~4.2 in \cite{segers2022}. For $T \subset \RdRd$, we write $T^{-1} = \cbr{(y,x) \in \RdRd \mid (x,y) \in T}$, $\dom T = \proj_1(T)$ and $\rge = \proj_2(T)$.

\begin{lemma}[Uniqueness of maximal monotone extension on the interior of the support]
	\label{lem:STV}	
	Let $\gamma \in \zcoup(\mu,\nu)$ for $\mu, \nu \in \Mzp(\Rd)$ and let $S, T \in \Fellmm(\Rd \times \Rd)$ satisfy $\spt \gamma \subset S \cap T$. Put  $V = \intr(\spt \mu)$ and $W = \intr(\spt \nu)$. Then $V \subset \dom S \cap \dom T$ and $S \restr V = T \restr V$, as well as $W \subset \rge S \cap \rge T$ and $S^{-1} \restr W = T^{-1} \restr W$.
\end{lemma}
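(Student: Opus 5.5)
The plan is to reduce the statement to two standard facts about maximal monotone operators: near-convexity of their domains, and the fact that a maximal monotone operator restricted to the interior of its domain is determined by its values on any dense subset there. By symmetry it suffices to treat $S\restr V=T\restr V$ and $V\subset\dom S\cap\dom T$. The statements for $W$ then follow by applying the same argument to $S^{-1},T^{-1}$, which are again maximal monotone. Their supports contain $\spt\gamma^{-1}$, the image of $\gamma$ under $(x,y)\mapsto(y,x)$, which lies in $\zcoup(\nu,\mu)$, and $\dom S^{-1}=\rge S$.

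First, $V\subset\dom S\cap\dom T$ is immediate from the last assertion of \cref{lem:supp-zero-proj}, applied once with $T$ and once with $S$, both of which are maximal monotone and contain $\spt\gamma$.

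Second, I would show that $D:=\proj_1(\spt\gamma)\cap V$ is dense in $V$. Let $U\subset V$ be open and nonempty. Choose $U'\subset U$ open, nonempty and bounded away from $0$; if $0\in U$, shrink $U$ to a punctured piece. Since $U'\subset\spt\mu$, we have $\mu(U')>0$, so $\gamma(U'\times\Rd)>0$. Because $\gamma$ is concentrated on $\spt\gamma$, some $(x,y)\in\spt\gamma$ has $x\in U'$. Hence $D\cap U\neq\emptyset$. For $x\in D$, there is $y$ with $(x,y)\in\spt\gamma\subset S\cap T$, so $S(x)\cap T(x)\neq\emptyset$. This only says the two operators share a point, not that they agree.

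Third, use that on $\intr(\dom S)\supset V$ a maximal monotone $S$ is single-valued outside a set that is Lebesgue-null, indeed of Hausdorff dimension at most $d-1$ (Zarantonello / Alberti–Ambrosio). On that set $S$ is continuous, and $S(x)$ equals the closed convex hull of the limits of $S(x_n)$ along $x_n\to x$ through any dense set of single-valuedness points (Rockafellar–Wets, Theorem~12.63-type representation). Let $E\subset V$ be the set where both $S$ and $T$ are single-valued; it is of full measure in $V$. I would need density of $D\cap E$, and this is the \textbf{main obstacle}. Density of $D$ alone does not suffice, because $D$ might lie inside the small exceptional set.

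To handle this, I would argue as in Lemma~4.2 of \cite{segers2022}. For $x\in D\cap E$ we get $S(x)=T(x)$. Then, for $x_0\in V$ with both operators single-valued, continuity of the single-valued points gives agreement if $D\cap E$ accumulates at $x_0$. To obtain this accumulation I would replace $D$ by a cleverer argument: monotonicity plus maximality implies that for $x\in V$ and any $(x',y')\in\spt\gamma$, $T(x)$ is contained in the half-space $\{v:\inpr{v-y',x-x'}\ge0\}$. Taking $x'\to x$ along points of $\proj_1\spt\gamma$ approaching from all directions (available by density of $D$ in an open set) then pins $T(x)$ down to the set of $v$ satisfying the same constraints as $S(x)$. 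Concretely, I would show that both $S(x)$ and $T(x)$ equal
\[
\bigcap\cbr{\cbr{v:\inpr{v-y',x-x'}\ge 0}\mid (x',y')\in\spt\gamma,\ x'\in V},
\]
the set of $v$ such that adding $(x,v)$ keeps $\spt\gamma\restr V$ monotone. The inclusion $\subset$ holds by monotonicity. For $\supset$, I would use local maximality of $S$ on the open set $V$ (local boundedness of $S$ on $\intr\dom S$ and closed-graph limits along dense directions). I would verify $\supset$ first at points of single-valuedness via a directional-derivative argument, and then extend to all of $V$ by the convex-hull-of-limits representation.

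This yields $S\restr V=T\restr V$.
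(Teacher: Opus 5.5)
Your argument is correct, but the ``main obstacle'' you identify is not real, and the detour it prompts is longer than needed. Density of $D=\proj_1(\spt\gamma)\cap V$ already suffices at every $x_0\in E$. Choose $x_n\in D$ with $x_n\to x_0$ and $y_n$ with $(x_n,y_n)\in\spt\gamma$, so that $y_n\in S(x_n)\cap T(x_n)$. By local boundedness of $S,T$ at $x_0\in\intr\dom S\cap\intr\dom T$, closedness of their graphs, and single-valuedness at $x_0$, the sequence $(y_n)$ converges both to the single element of $S(x_0)$ and to that of $T(x_0)$. Nothing requires $x_n\in E$. The convex-hull-of-limits representation along the common full-measure set $E$ then gives $S\restr V=T\restr V$, so the route you started on closes immediately. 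Note that this representation transfers the equality $S=T$ from $E$ to $V$. It does not by itself transfer an inclusion $K(x)\subset S(x)$, where $K(x)$ is your intersection of half-spaces, so your two-stage plan only works if read that way.

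Your characterization $S(x)=K(x)=T(x)$ is nevertheless true, and it gives a shorter, fully elementary proof at every $x\in V$, with no use of a.e.\ single-valuedness. Let $v\in K(x)$ and let $u$ be a unit vector. Take $x_n=x+t_nu_n\in D$ with $t_n\downarrow0$, $u_n\to u$, and $(x_n,y_n)\in\spt\gamma$. The defining inequality of $K(x)$ gives $\inpr{y_n-v,u_n}\ge0$. Local boundedness and closedness of $S$ give a limit point $w\in S(x)$ with $\inpr{w,u}\ge\inpr{v,u}$. Since $S(x)$ is nonempty, closed and convex, separation gives $v\in S(x)$; the reverse inclusion is monotonicity.

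The paper does not argue at this level. It extends $\gamma$ to $\tilde\gamma$ on $\RdRd$, a genuine coupling of its possibly infinite marginals $\hat\mu,\hat\nu$. It gets $V\subset\dom S\cap\dom T$ from \cref{lem:supp-zero-proj}. It then reuses the proof of Lemma~4.2 in \cite{segers2022}, observing that it relies only on maximal monotonicity and not on finiteness, to get $S=T$ on $\intr\spt\hat\mu\supset V$. The paper's route is short by citation. Yours is self-contained and identifies $S\restr V$ explicitly as the set of points monotonically related to $\spt\gamma\restr V$. Your treatment of $W$ via $S^{-1},T^{-1}$ and the coordinate-swapped $\gamma$ matches the paper's.
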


\begin{proof}[Proof of Lemma~\ref{lem:STV}]
    Let $\tilde{\gamma}$ be the extension of $\gamma$ to $\RdRd$ defined by $\tilde{\gamma}(B) = \gamma(B \sauf \cbr{(0,0)})$ for $B \in \borel(\RdRd)$.
Let $\hat{\mu}=\phi_1\tilde{\gamma}$ and $\hat{\nu} = \phi_2\tilde{\gamma}$ be its left- and right-marginals; by definition, $\tilde\gamma$ lies in $\coup(\hat\mu,\hat\nu)$.
According to \cref{lem:supp-zero-proj} and Lemma~C.1 in \citet{segers2022},
it is immediate that both $\hat{V} = \intr\spt\hat{\mu}$ and $V$ are contained in both $\dom S$ and $\dom T$, since $\spt\gamma=\spt\tilde\gamma$. The rest of the proof of Lemma~4.2 in \cite{segers2022} can then be reused directly since it exclusively relies on properties of maximal monotone maps and not on the finiteness of the measures considered there. It follows that $S(x) = T(x)$ for all $x \in \hat{V}$.
Since $\spt\mu\subset\spt\hat{\mu}$, we have $S(x) = T(x)$ for all $x \in V$ too.
	
The statements about $W$ follow by switching the roles of $\mu$ and $\nu$, upon noting that $\dom S^{-1} = \rge S$.
\end{proof}

\begin{lemma}
	\label{lem:Psi-finite}
	Let $\mu, \nu \in \Mz(\Rd)$ with $\mu(\Rdmz) = \infty$ or $\nu(\Rdmz) = \infty$ (or both), and let $\gamma \in \zcoup(\mu, \nu)$ be supported by $\partial\psi$ for some closed convex function $\psi$. Then $(0,0) \in \spt\gamma \subset \partial\psi$ and $\psi(0) = \inf_{x \in \Rd} \psi(x)$.
\end{lemma}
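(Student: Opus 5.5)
The plan has two steps: first show that $(0,0)\in\spt\gamma$, then read off the minimality of $\psi$ at $0$ from the subgradient inequality.

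\emph{Step 1: $(0,0)\in\spt\gamma$.} Without loss of generality $\mu(\Rdmz)=\infty$; the case $\nu(\Rdmz)=\infty$ is symmetric, using $\Rd\times\cdot$ instead of $\cdot\times\Rd$. Then $\gamma$ has infinite total mass on $\RdRdmz$, because $\gamma(\Rdmz\times\Rd)=\mu(\Rdmz)=\infty$. Fix $\eps>0$. The complement of $\ball_{2d}(0,\eps)$ in $\RdRdmz$ is bounded away from the origin, so by \cref{lm:zcoup_in_Mzp} it has finite $\gamma$-mass. Hence $\gamma\rbr{\ball_{2d}(0,\eps)\sauf\cbr{(0,0)}}=\infty>0$ for every $\eps>0$. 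By definition $\spt\gamma$ is the support of the extension $\tilde\gamma$, so every open neighbourhood of $(0,0)$ has positive $\tilde\gamma$-mass, and therefore $(0,0)\in\spt\gamma$. This is the same argument already used in the proof of \cref{lem:necessary-proper}(i).

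\emph{Step 2: $\psi(0)=\inf\psi$.} Since $\spt\gamma\subset\partial\psi$ by assumption, Step 1 gives $(0,0)\in\partial\psi$. By the definition of the subdifferential in \cref{eq:subdiff}, this means $0\in\dom\psi$, so $\psi(0)<\infty$, and
\[
\psi(y)\ge\psi(0)+\inpr{y-0,0}=\psi(0)\qquad\text{for all }y\in\Rd.
\]
Thus $\psi(0)=\inf_{x\in\Rd}\psi(x)$.

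There is no real obstacle. The only point needing care is the convention that $\spt\gamma$ is taken via $\tilde\gamma$, which puts no mass on the origin; this is why infinite mass near the origin, rather than an atom at it, is what places $(0,0)$ in the support.
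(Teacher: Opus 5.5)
Your proof is correct and takes essentially the same route as the paper. The infinite total mass of $\gamma$, together with finiteness away from the origin (Lemma~\ref{lm:zcoup_in_Mzp}), forces every neighbourhood of $(0,0)$ to carry infinite mass, and the subgradient inequality at $(0,0)\in\partial\psi$ then gives the minimality; the only cosmetic difference is that you use the balls $\ball_{2d}(0,\eps)$ where the paper uses the products $\ball_d(0,\eps)\times\ball_d(0,\eps)$.
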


\begin{proof}
    The zero-coupling measure $\gamma$ must have infinite mass: if, for instance, $\mu(\Rdmz) = \infty$, then also $\gamma(\Rdmz \times \Rd) = \mu(\Rdmz) = \infty$. By \cref{lm:zcoup_in_Mzp}, $\gamma$ belongs to $\Mz(\RdRd)$. Therefore, necessarily $\gamma(\ball_d(0,\eps) \times \ball_d(0,\eps)) = \infty > 0$ for every $\eps > 0$, which means that $\spt \gamma$ meets $\ball_d(0,\eps) \times \ball_d(0,\eps)$ for every $\eps > 0$. As a consequence, $(0, 0) \in \spt \gamma \subset \partial \psi$, the latter inclusion holding by assumption. But then also $0 \in \dom \partial \psi \subset \dom \psi$ in view of \cref{eq:domains}. By definition of the subdifferential we find, as required,
	\[
		\forall x \in \Rd, \qquad
		\psi(x) \ge \psi(0) + \inpr{x - 0, 0} = \psi(0).
        \qedhere
	\]
\end{proof}

\end{document}